\newtheorem{thm}{Theorem}[section]
\newtheorem{lem}[thm]{Lemma}
\newtheorem{prop}[thm]{Proposition}
\theoremstyle{definition}
\newtheorem{exmp}{Example}
\newtheorem{ques}{Question}
\newtheorem{defn}[thm]{Definition}
\newtheorem{rem}[thm]{Remark}
\newtheorem*{ack}{Acknowledgments}
\numberwithin{equation}{section}
\newcommand{\comment}[1]{}
\newcommand{\bd}{\partial}
\newcommand{\ints}{\mathbb Z}
\newcommand{\rls}{\mathbb R}
\newcommand{\rat}{\mathbb Q}
\newcommand{\bb}[1]{\mathbb{#1}}
\newcommand{\cl}[1]{\mathcal{#1}}
\newcommand{\scr}[1]{\mathscr{#1}}
\newcommand{\wt}[1]{\widetilde{#1}}
\newcommand{\im}{\text{Im }}
\newcommand{\al}[1]{\begin{align*}#1\end{align*}}
\newcommand{\en}[1]{\begin{enumerate}#1\end{enumerate}}
\newcommand{\set}[1]{\left\{#1\right\}}
\newcommand{\setn}[2]{\left\{#1\mid #2\right\}}
\newcommand{\abs}[1]{\left\vert#1\right\vert}
\newcommand{\tb}{\text{tb}}
\newcommand{\rot}{\text{rot}}
\begin{document}

\title[]{A polynomial invariant for links in lens spaces}
\author[]{Christopher R. Cornwell}
\address[]{Department of Mathematics, Michigan State University, East
Lansing, MI 48824}
\email[]{cornwell@math.msu.edu}

\begin{abstract}
We prove the existence of a polynomial invariant that satisfies the HOMFLY skein relation for links in a lens space. In the process we also develop a skein theory of toroidal grid diagrams in a lens space.
\end{abstract}

\maketitle

 \section{Introduction}
 \label{sec:intro}
  
 The HOMFLY polynomial is an invariant of links in $S^3$, introduced in \cite{FHLMOY} (see also \cite {PT}), and may be defined via a linear skein relation along with a choice of normalization on the unknot. As observed by Ocneanu, the HOMFLY polynomial can also be expressed using a trace of representations into Hecke algebras (see also \cite{J1}). 
 
 Original constructions of the HOMFLY polynomial are difficult to replicate in manifolds other than $S^3$ as they rely on planar link projections. However, Birman and Lin show in \cite{BirL} that the HOMFLY and Kauffman polynomials are generating functions for particular sequences of Vassiliev (or finite-type) link invariants (see also \cite{DBN1},\cite{DBN2}, and \cite{Kont}). In \cite{Ka} and \cite{KL}, Vassiliev invariants in other 3-manifolds were studied from an intrinsic point of view. In particular let $M$ be a rational homology 3-sphere that is either atoroidal or Seifert-fibered with orientable orbit space. Fix a collection $\scr{TL}$ of ``trivial links'' in $M$ having one representative from each homotopy class of links. Let $\scr{TL}^*\subset\scr{TL}$ denote the collection of trivial links that have no nullhomotopic components. Kalfagianni and Lin show \cite{KL} that, given a choice of value $J_M(T)$ for each $T\in\scr{TL}^*$, and a choice $J_M(U)$ for the standard unknot $U$, there exists a unique power-series valued invariant $J_M$ with coefficients that are Vassiliev invariants, satisfying the HOMFLY skein relation. 

Given $M$ as above, it is unknown whether $J_M$ can be made to take values that are Laurent polynomials. In this consideration, Kalfagianni asked the following question \cite{Ka2} which encapsulates the primary difficulty of the problem:
\begin{ques} Is there a choice of $\scr{TL}^*$ such that every link $L\subset M$ can be reduced to disjoint unions of unlinks and elements in $\scr{TL}^*$ by a series of finitely many skein moves?
\label{ques1}
\end{ques}

In this article we use \emph{toroidal grid diagrams} to study links in lens spaces and develop a skein theory of such grid diagrams. With this theory in hand, we then define for each lens space $L(p,q)$ a collection of trivial links $\scr{TL}$ and show, via a complexity $\psi$ on grid diagrams, that this choice provides a positive answer to Question \ref{ques1} in lens spaces. As a result, the invariant $J_M$ of Kalfagianni and Lin provides a HOMFLY polynomial when $M$ is a lens space.

Recall that in the setting of $S^3$, to show that every link can be reduced by skein moves to a disjoint union of unlinks, we consider a planar projection of the link and induct on the complexity that uses the crossing number and unlinking number of the projection. Our complexity $\psi$ is inspired by this, but works with skein theory on grid diagrams. In particular, part of the definition of $\psi$ involves a function $scr$ that decreases under resolutions. However, it does not count the analogues of crossings in our skein theory, but rather objects related to these crossings. 

Throughout the paper we let $p,q$ be coprime integers with $0\le\abs q<p$. The lens space which results as $-\frac pq$ surgery on the unknot in $S^3$ is denoted by $L(p,q)$ (note that we do not consider the case $S^1\times S^2$). We discuss links in $L(p,q)$ through the tool of toroidal grid diagrams, as developed by Baker, Grigsby, and Hedden \cite{BGH,BG}. Though we do not treat the subject here, such diagrams correspond to projections of Legendrian links in $L(p,q)$ equipped with a universally tight contact structure. Moreover, the polynomial given in this paper is related to the contact geometry of these Legendrian links (see \cite{C2}).

We construct a collection of grid diagrams which we call trivial link diagrams. Associated to each trivial link diagram is a link in $L(p,q)$. Let a \emph{trivial link} be a link that is isotopic to this link. Our construction ensures that in any homotopy class of links, there is exactly one trivial link. The main result of the paper is stated as follows. 

\begin{thm} Let $\scr L$ be the set of isotopy classes of links in $L(p,q)$ and let $\scr{TL}\subset\scr L$ denote the set of isotopy classes of trivial links. Define $\scr{TL}^*\subset\scr{TL}$ to be those trivial links with no nullhomotopic components. Let $U$ be the isotopy class of the standard unknot, a local knot in $L(p,q)$ that bounds an embedded disk. Suppose we are given a value $J_{p,q}(T)\in\ints[a^{\pm1},z^{\pm1}]$ for every $T\in\scr{TL}^*$. Then there is a unique map $J_{p,q}:\scr{L}\to\ints[a^{\pm1},z^{\pm1}]$ such that
	\en{
	\item[(i)] $J_{p,q}$ satisfies the skein relation
		\begin{equation*}
		a^{-p}J_{p,q}(L_+)-a^pJ_{p,q}(L_-)=zJ_{p,q}(L_0).
		\end{equation*}
	\item[(ii)] $J_{p,q}(U)=a^{-p+1}$.
	\item[(iii)] $J_{p,q}\left(U\coprod L\right)=\frac{a^{-p}-a^p}{z}J_{p,q}(L)$.
	}
\label{mainThm}
\end{thm}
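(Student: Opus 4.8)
The plan is to prove existence and uniqueness of $J_{p,q}$ simultaneously, by a single induction on the complexity $\psi$ of a toroidal grid diagram, built on the skein theory of such diagrams. We use two facts established above: that every link in $L(p,q)$ is carried by a toroidal grid diagram and that the grid-diagram skein moves induce exactly the link-level skein relation (i); and the positive answer to Question~\ref{ques1} obtained there via $\psi$, which we package as follows --- a grid diagram with minimal $\psi$ (a ``trivial link diagram'') represents a disjoint union $U\coprod\cdots\coprod U\coprod T$ of finitely many standard unknots with a single $T\in\scr{TL}^*$, while any diagram with larger $\psi$ contains a configuration at which both the crossing change and the resolution strictly decrease $\psi$. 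The latter rests on the summand $scr$ of $\psi$, which drops under resolutions, together with the companion count that drops under crossing changes.

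\textbf{Uniqueness.} Given $J_{p,q}$ satisfying (i)--(iii), I would show that its value on a link $L$ is forced, by induction on $\psi(D)$ for a grid diagram $D$ of $L$. If $D$ is a trivial link diagram then $L$ is isotopic to $U\coprod\cdots\coprod U\coprod T$ with $T\in\scr{TL}^*$, so iterating (iii), the normalization (ii), and the prescribed value $J_{p,q}(T)$ determine $J_{p,q}(L)$. Otherwise, realize $D$ as $L_+$ (or $L_-$) in a skein triple whose remaining two diagrams have strictly smaller $\psi$; rewriting (i) to isolate $J_{p,q}(L)$ expresses it as a $\ints[a^{\pm1},z^{\pm1}]$-combination of values fixed by the inductive hypothesis. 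Hence $J_{p,q}$ is unique.

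\textbf{Existence.} I would run the same recursion to \emph{define} a function on grid diagrams: on a trivial link diagram, assign the value forced by (ii), (iii) and the data $\{J_{p,q}(T)\}_{T\in\scr{TL}^*}$ --- unambiguously, since the homotopy class of the link, and hence the number of unknot summands and the type $T$, is read off the diagram; on any diagram $D$ of larger $\psi$, pick a distinguished configuration and set the value at $D$ to be the solution of (i) (with $D$ as $L_+$ or $L_-$) in terms of the values, already defined inductively, at the two $\psi$-smaller diagrams. The work is to show this is independent of (a) the distinguished configuration at each step, and (b) the grid diagram presenting $L$. Part (a) is a confluence argument on a pair of configurations, using the skein relation and the consistency of the base assignments under (iii) to check that resolving in either order agrees; granting (a), part (b) reduces to invariance under the finitely many grid moves --- commutation, (de)stabilization, and the moves peculiar to the toroidal setting --- each verified in a local picture. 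The resulting $J_{p,q}:\scr L\to\ints[a^{\pm1},z^{\pm1}]$ then satisfies (i) by construction and (ii), (iii) on trivial link diagrams by definition, with (ii)--(iii) propagated by the recursion; and since the recursion terminates (Question~\ref{ques1}) and keeps values in $\ints[a^{\pm1},z^{\pm1}]$ at every step, $J_{p,q}$ is genuinely Laurent-polynomial valued --- the improvement over the power-series invariant $J_M$ of Kalfagianni and Lin.

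\textbf{Main obstacle.} The crux is the well-definedness of the existence construction: the confluence analysis of two distinguished configurations --- the lens-space counterpart of the two-crossing case in the Lickorish--Millett treatment of the HOMFLY polynomial in $S^3$ --- together with invariance under the grid moves, especially those with no $S^3$ analogue. A related essential point is to arrange $\psi$ (through $scr$ and its companion count) so that the crossing change and the resolution at the chosen configuration both strictly lower it, and to confirm that the prescribed values on $\scr{TL}^*$ are globally compatible, under (ii)--(iii), with every reduction path supplied by the answer to Question~\ref{ques1}.
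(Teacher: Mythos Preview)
Your plan is correct in principle but takes a substantially harder route than the paper. The paper does \emph{not} construct $J_{p,q}$ from scratch via a recursion on grid diagrams; instead it invokes Theorem~\ref{thm:psInvt} (Kalfagianni--Lin) to obtain, once and for all, a well-defined link invariant $J_M$ with values in the power-series ring $\widehat R$, satisfying the skein relation. After the change of variable $a^p=v$ this \emph{is} $J_{p,q}$, and existence and uniqueness of a map satisfying (i)--(iii) are already inherited from that theorem (see Remark~\ref{rem:normalization}). The only thing left to prove is that the values lie in $\ints[a^{\pm1},z^{\pm1}]$, and this is exactly where the induction on $\psi=(GN,scr,u)$ enters: for a non-trivial $D$, the first skein crossing in a $u(D)$-long reduction yields $D_\mp$ with strictly smaller $u$, while Lemma~\ref{lem:scrDrop} gives $scr(D_0)<scr(D)$; by induction $J_{p,q}(K_\mp)$ and $J_{p,q}(K_0)$ are Laurent polynomials, and the skein relation forces $J_{p,q}(K)$ to be one as well.

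The practical consequence is that everything you flag as the ``main obstacle'' --- the confluence of two distinguished configurations and invariance under all grid moves --- simply does not arise in the paper's argument: well-definedness is outsourced to \cite{KL}, whose proof handles the global consistency (via almost general position of homotopies and the integrability criterion for Vassiliev invariants) in a way that is independent of grid diagrams. Your approach would be more self-contained, effectively a lens-space Lickorish--Millett argument carried out on toroidal grids, and would yield the Laurent-polynomial conclusion automatically; but it is considerably more work, and the paper deliberately avoids it. If you pursue your route, note also that you only need $\psi$ to drop under the resolution and under the \emph{chosen} crossing change (not an arbitrary one), which is precisely what the triple $(GN,scr,u)$ is engineered to do; your phrase ``companion count'' should be identified with $u$ in Definition~\ref{defn:unlkNum}.
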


As usual, the links $L_+, L_-,$ and $L_0$ differ only in a small neighborhood. The exact construction of these links in $L(p,q)$ is made clear in the subsequent text and their construction is consistent with that in \cite{KL}. 

\begin{rem} In a large class of rational homology spheres, Kalfagianni has found a power series valued invariant of framed links that satisfies the Kauffman skein relation~\cite{Ka1}. The ideas in the proof of Theorem \ref{mainThm} should also be capable of showing that this invariant provides a Kauffman polynomial for links in $L(p,q)$.
\end{rem}
\begin{rem} The polynomial given by Theorem \ref{mainThm} is used in ~\cite{C2} to prove an analogue of the Franks-Williams-Morton inequality in $L(p,q)$ with a universally tight contact structure. This inequality exhibits a degree of the HOMFLY polynomial as an upper bound on the maximal self-linking number.
\end{rem}

We organize the paper as follows: In Section \ref{sec:prelim} we set notation, and will review constructions and results from \cite{KL} and \cite{BG,BGH}. In Section \ref{sec:hty} we prove some homotopy results and develop the skein theory on grid diagrams for links in lens spaces. In Section \ref{sec:HOMFLY} we prove our main results. We define a collection of trivial links and show that there is exactly one trivial link in each free homotopy class of links. Secondly, we show using a complexity function that if the power-series invariant is Laurent polynomial valued on each trivial link, then the same is true for any link. In Section \ref{sec:comput} we calculate the polynomial in some examples.

\begin{ack}The author would like to thank Eli Grigsby and Matt Hedden for their helpful conversations and input. He also thanks the referee for a careful reading and many helpful suggestions that helped to clarify the exposition. He also thanks his advisor Effie Kalfagianni for introducing him to these problems, for her expertise, and for many helpful discussions. This research was supported in part by NSF--RTG grant DMS-0353717, NSF grant DMS-0805942, and by a Herbert T. Graham scholarship.
\end{ack}

\section{Preliminaries}
\label{sec:prelim}

In this section we review constructions and results that will be important for the results we prove in sections \ref{sec:hty} and \ref{sec:HOMFLY}. The review is divided into two parts. The first part is devoted to 3-manifold techniques and the power-series invariant found in \cite{KL}, and in the second part we review the knot theory of grid diagrams in lens spaces, following \cite{BG},\cite{BGH}.

\subsection{Power series invariants in rational homology 3-spheres}
\label{subsec:psInvt}
Consider piecewise-linear links in a 3-manifold $M$ (the topic could also be treated in the smooth category). Let $P$ be a disjoint union of oriented circles. A piecewise-linear map $L:P\to M$ is an \emph{$n$-singular link} if it has exactly $n$ transverse double points. Equivalence of $n$-singular links is given by ambient isotopy in $M$ provided the double points remain transverse throughout the isotopy. We often write $L$ for the image $L(P)$. Note that a 0-singular link is a link in $M$.

For any $n$-singular link $L$, $P$ can be given the structure of a simplicial complex so that, for each double point $x$, there are distinct 1-simplexes $\sigma_1,\sigma_2\subset P$ with the interior of $\sigma_i$ containing one of the points in $L^{-1}(x)$. We may assume that $L(\sigma_1)\cup L(\sigma_2)$ is contained in an embedded disk $D\subset M$ with $L(\bd\sigma_1)\cup L(\bd\sigma_2)\subset\bd D$. Moreover, since $\sigma_i$ inherits an orientation from that of $P$, we may refer to the initial and terminal points of $\sigma_i$.

Let $x\in M$ be a double point of $L$. Subdividing the simplicial structure on $P$ if necessary, there is a small ball neighborhood $B\subset M$ of $x$ such that $D$ is properly embedded in $B$ and $L\cap B=L(\sigma_1)\cup L(\sigma_2)$. Define $a_1$ and $a_2$ to be two simple arcs in distinct components of $\bd B\setminus\bd D$ going from the initial point to the terminal point of $L(\sigma_1)$. Define $b_1$ and $b_2$ to be disjoint simple arcs on $\bd D$ with $b_1$ going from the initial point of $L(\sigma_1)$ to the terminal point of $L(\sigma_2)$ and $b_2$ going from the initial point of $L(\sigma_2)$ to the terminal point of $L(\sigma_1)$.

We can define three $(n-1)$-singular links from $L$ in the following manner:
	\begin{align}
		L_+ 	&= \overline{L(P\setminus\sigma_1)}\cup a_1;\notag\\
		L_- 	&= \overline{L(P\setminus\sigma_1)}\cup a_2;	\label{eqn:resl'ns}\\
		L_0 	&= \overline{L(P\setminus(\sigma_1\cup\sigma_2))}\cup(b_1\cup b_2).\notag
	\label{eqn:resl'ns}
	\end{align}
Note that a different choice of $a_1$ and $a_2$ would interchange $L_+$ and $L_-$. If $M$ is oriented, this ambiguity can be dealt with. Since $x$ is a transverse double point, consider the ordered pair of tangent vectors $L'(\sigma_1), L'(\sigma_2)$ at $x$. There is a unique vector normal to $D$ at $x$ that completes this ordered pair to an oriented frame that agrees with the orientation on $B$ at $x$. Such a vector points into one of the components of $B\setminus D$. Take $a_1$ to be in the same component.

This treatment of $n$-singular links in $M$ was utilized by Kalfagianni and Lin (\cite{KL}) to study homotopies of links in rational homology spheres. In particular, they saw that any free homotopy of a link in $M$ can be perturbed slightly to a particularly nice form called \emph{almost general position}, where the homotopy only fails to be isotopy at finitely many moments, and near these times one passes from some $L_+$ to $L_-$ or vice versa. Almost general position of homotopies was an important tool in \cite{KL} for the definition of the power series invariant that satisfies the HOMFLY skein relation. We briefly review their theorem. 

Let $U$ be the isotopy class of a knot in $M$ that is the standard unknot in a small ball neighborhood of a point of $M$. Fix $\scr{TL}$, a collection of links in $M$ such that there is exactly one representative in $\scr{TL}$ for each homotopy class of links in $M$, and furthermore, if $TL\in\scr{TL}$ has $k$ components that are homotopically trivial then $TL=L\coprod^k U$ for some link $L$ that has no homotopically trivial components. An element of $\scr{TL}$ is called a \emph{trivial link}. 

Let $\widehat R:=\bb C[[x,y]]$ be the ring of formal power series in the variables $x$ and $y$ over $\bb C$. Define $v\in\widehat R$ by $v:=e^y=1+y+\frac{y^2}2+\frac{y^3}6+\cdots$ and let $z\in\widehat R$ be defined by $z:=e^x-e^{-x}=2x+\frac{x^3}3+\frac{x^5}{60}+\cdots$.

E. Kalfagianni and X.S. Lin show the following in \cite{KL}. Recall that a 3-manifold $M$ is called \emph{atoroidal} if $\pi_2(M)$ is trivial and $M$ contains no essential tori. $M$ is called \emph{Seifert fibered} if it can be realized as an $S^1$-bundle over a 2-dimensional orbifold. 

\begin{thm}[\cite{KL}] 
Let $M$ be a rational homology 3-sphere that is either atoroidal or Seifert fibered over an orientable orbifold. Let $\scr L$ be the set of isotopy classes of links in $M$. Then, given values $J_M(T)$ for each $T\in\scr{TL}$ such that $J_M(T\coprod U)=\frac{v^{-1}-v}zJ_M(T)$, there is a unique map $J_M:\scr L\to\widehat R$ such that
	\[v^{-1}J_M(L_+)-vJ_M(L_-)=zJ_M(L_0).\]
	
\label{thm:psInvt}
\end{thm}

\comment{
We wish to say a word about the construction of the power series invariant $J_M$. First we note that the links $L_+, L_-,$ and $L_0$ in the statement of the theorem correspond to the resolutions of a singular link as in (\ref{eqn:resl'ns}).

Kalfagianni and Lin define $J_M$ by first defining inductively a sequence of Vassiliev invariants that ``integrate'' to link invariants. We recall that a Vassiliev invariant of order $n$ is an invariant of singular links that is identically zero on $j$-singular links if $j>n$. The fact that the contructed Vassiliev invariants integrate to link invariants is a result of the following theorem proven in \cite{KL}. In the statement of the theorem, a Seifert fibered space $M$ is called \emph{special} if $\pi_1(M)$ is infinite and $M$ is not Haken. It is known \cite{??} that if $M$ is special, then it fibers over $S^2$ with three exceptional fibers. 

\begin{thm}
Suppose that $M$ is a rational homology sphere that is either atoroidal or is Seifert fibered and not special. Also assume that $R$ is a ring which is torsion free as an abelian group and that $f$ is an invariant on 1-singular links with values in $R$. Then there exists a link invariant $F:\scr L\to R$ so that 
	\[f(L_\times)=F(L_+)-F(L_-)\]
for any 1-singular link $L_\times$ if and only if
	\al{
		f(\propto)	&=0\\
		f(L_{\times +})-f(L_{\times -})	&= f(L_{+\times})-f(L_{-\times}).
	}
\label{thm:integCond}
\end{thm}

\begin{rem}
The only rational homology sphere that is special is $\rls\bb P^2(-1;2,2)$.
\end{rem}
It is known \cite{J2} that the 2-variable HOMFLY polynomial for links in $S^3$ is equivalent to a sequence of 1-variable Laurent polynomials $\set{J_n(t)}$, and that replacing $t$ by $e^x$ gives a power series invariant $J_n(x)$ with coefficients that are Vassiliev invariants \cite{DBN1,BirL}. The authors of \cite{KL} use these ideas to arrive at Theorem \ref{thm:psInvt}.
}
\subsection{Links and grid diagrams in $L(p,q)$}
\label{subsec:GridDiags}

In this section we review a construction central to this paper, that of toroidal grid diagrams in $L(p,q)$, developed in \cite{BG} (see also \cite{BGH}). Before doing so, we point out that, similar to the correspondence between Legendrian links and grid diagrams in the standard contact structure on $S^3$, there is a correspondence between Legendrian links in $L(p,q)$ with a universally tight contact structure and toroidal grid diagrams. This correspondence was fully developed in \cite{BG} and, along with the results of this paper, is used by the author in \cite{C2} to prove the Franks-Williams-Morton inequality in lens spaces with these contact structures.

The following definition of a toroidal grid diagram follows that given in \cite{BG}.

\begin{defn} A \emph{(toroidal) grid diagram $D$ with grid number $n$} in $L(p,q)$ is a set of data $(T,\vec\alpha, \vec\beta, \vec{\bb O},\vec{\bb X})$, where:
	\begin{itemize}
		\item $T$ is the oriented torus obtained via the quotient of $\rls^2$ by the $\ints^2$ lattice generated by $(1,0)$ and $(0,1)$.
		\item $\vec\alpha=\set{\alpha_0,\ldots,\alpha_{n-1}}$, with $\alpha_i$ the image of the line $y=\frac in$ in $T$. Call the $n$ annular components of $T-\vec\alpha$ the \emph{rows} of the grid diagram.
		\item $\vec\beta=\set{\beta_0,\ldots,\beta_{n-1}}$, with $\beta_i$ the image of the line $y=-\frac p{q}(x-\frac i{pn})$ in $T$. Call the $n$ annular components of $T-\vec\beta$ the \emph{columns} of the grid diagram.
		\item $\vec{\bb O}=\set{O_0, \ldots, O_{n-1}}$ is a set of $n$ points in $T-\vec\alpha-\vec\beta$ such that no two $O_i$'s lie in the same row or column.
		\item $\vec{\bb X}=\set{X_0, \ldots,X_{n-1}}$ is a set of $n$ points in $T-\vec\alpha-\vec\beta$ such that no two $X_i$'s lie in the same row or column.
	\end{itemize}
	
	The components of $T-\vec\alpha-\vec\beta$ are called the \emph{fundamental parallelograms} of $D$ and the points $\vec{\bb O}\cup\vec{\bb X}$ are called the \emph{markings} of $D$. Two grid diagrams with corresponding tori $T_1, T_2$ are considered equivalent if there exists an orientation-preserving diffeomorphism $T_1\to T_2$ respecting the markings (up to cyclic permutation of their labels).
\label{gridDefn}
\end{defn}

We often leave out the descriptor ``toroidal'' when referring to grid diagrams in $L(p,q)$. Note that such a grid diagram has ``slanted'' $\beta$ curves. For considerations of both convenience and aesthetics, we alter the fundamental domain of $T$ and ``straighten'' our pictures so that the $\beta$ curves are vertical. Figure \ref{fig:Strait} shows how this ``straightening'' is accomplished. Keep in mind that, as shown in Figure \ref{fig:Strait}, a fixed $\alpha_i$ is intersected $p$ times by each $\beta_j$, and $\beta_j$ is seen in the ``straightened'' figure as $p$ vertical arcs.

\begin{figure}[ht]
	\[\begin{tikzpicture}[>=stealth]
	\fill[green!60!black,fill opacity=0.70]
		(8.01,3.49) -- (8.51,1.76) -- (8.74,1.76) -- (8.24,3.49)--cycle;
	\fill[green!60!black, fill opacity=0.70]
		(6,-1.625) -- (6.375,-1.625) -- (6.375,-1.25) -- (6,-1.25) --cycle;
	\draw[green!30!black,->]
		(9,4.05) -- node [above, at start] {fundamental parallelogram} (8.35,2.8);
		
		\draw[step=3.5,thick]	(0,0) grid (3.5,3.5);
	\foreach \x in {2,3,4,5,6,7}
		\draw[thick] (0.5*\x,0)--(0.5*\x-1,3.5);
	\foreach \x in {2,3,4,5,6}
		\draw[gray,thin] (0.5*\x+0.25,0)--(0.5*\x-0.75,3.5);
	\foreach \r in {0,180}
		\draw[rotate around = {\r:(1.75,1.75)},thick]
			(0.5,0) -- (0,1.75);
	\foreach \x in {0,1}
	\foreach \r in {0,180}
		\draw[gray,thin,rotate around={\r:(1.75,1.75)}]
			(0.25+0.5*\x,0) -- (0,0.875+1.75*\x);
	\draw[gray,thin]
		(0,1.75) -- (3.5,1.75);
	\draw
		(0.375,0.875) node {x}
		(0.375,2.625) node {o}
		(2.125,2.625) node {x}
		(3.125,0.875) node {o};

	\draw[step=3.5,cm={1,0,0,1,(7,0)},thick]
		(0,0) grid (3.5,3.5);
	\draw[white,dashed,very thick]
		(10.5,0)--(10.5,3.5)--(9.5,3.5);
	\draw[thick]
		(6,3.5)--(7,3.5);
	\foreach \x in {0,1,2,3,4,5,6,7}
		\draw[cm={1,0,0,1,(7,0)},thick] (0.5*\x,0)--(0.5*\x-1,3.5);
	\foreach \x in {0,1,2,3,4,5,6}
		\draw[gray,thin,cm={1,0,0,1,(7,0)}] (0.5*\x+0.25,0)--(0.5*\x-0.75,3.5);
	\draw[gray,thin,cm={1,0,0,1,(7,0)}]
		(-0.5,1.75) -- (3,1.75);
	\draw[cm={1,0,0,1,(7,0)}]
		(0.375,0.875) node {x}
		(0.375,2.625) node {o}
		(2.125,2.625) node {x}
		(3.125,0.875) node {o};
	
	\draw[step=0.75,cm={1,0,0,1,(3,-2)},thick]
		(0,0) grid (5.25,0.75);
	\draw[gray,thin,step=0.75,cm={1,0,0,1,(3.375,-1.625)}]
		(-0.375,-0.375) grid (4.875,0.375);
	\draw[cm={1,0,0,1,(3,-2)}]
		(0.9375,0.1875) node {x}
		(1.6875,0.5625) node {o}
		(5.0625,0.1875) node {o}
		(4.3125,0.5625) node {x};
	\foreach \x in {0,7}
	\foreach \y in {0,3.5}
	\filldraw
		(\x,\y) circle (1pt);
	\foreach \x in {0,7}
	\draw
		(\x,0) node [below] {$z$}
		(\x,3.5) node [above] {$z$};
	\filldraw
		(3,-2) circle (1pt)
		(4.5,-1.25) circle (1pt);
	\draw
		(3,-2) node [below] {$z$}
		(4.5,-1.25) node [above] {$z$};
	\draw[thick]
		(-0.6,1.75) node {(A)}
		(6,1.75) node {(B)}
		(2.4,-1.625) node {(C)};
	
	\foreach \t in {0,3.5}
	\draw[thick,red]
		(3.5+\t,0) -- (2.5+\t,3.5) -- (3.5+\t,3.5)--cycle;
	\draw[thick, blue,->]
		(0.5,4.25) -- node [above,at start] {$\alpha_0$} (1.375,3.505);
	\draw[thick,blue,->]
		(-0.25,2.5) -- node [above,at start] {$\alpha_1$} (0.625,1.755);
	\draw[thick, blue,->]
		(0.8,-0.4) -- node [below,at start] {$\beta_0$} (0.85,0.5);
	\draw[thick,blue,->]
		(2.05,-0.4) -- node [below,at start] {$\beta_1$} (2.1,0.5);
		
	\end{tikzpicture}\]
\caption{(A) shows a grid diagram in $L(7,2)$, with grid number 2, on a fundamental domain of $T$. In (B) we alter the fundamental domain. (C) is the ``straightening'' of (B). As indicated, $\alpha_0$ is the thick horizontal circle in the figure; $\alpha_1$ is the thinner, grey horizontal circle; $\beta_0$ is the circle made of the thick slanted segments (which are vertical in (C)); and $\beta_1$ is the union of thin grey slanted segments.}
\label{fig:Strait}
\end{figure}
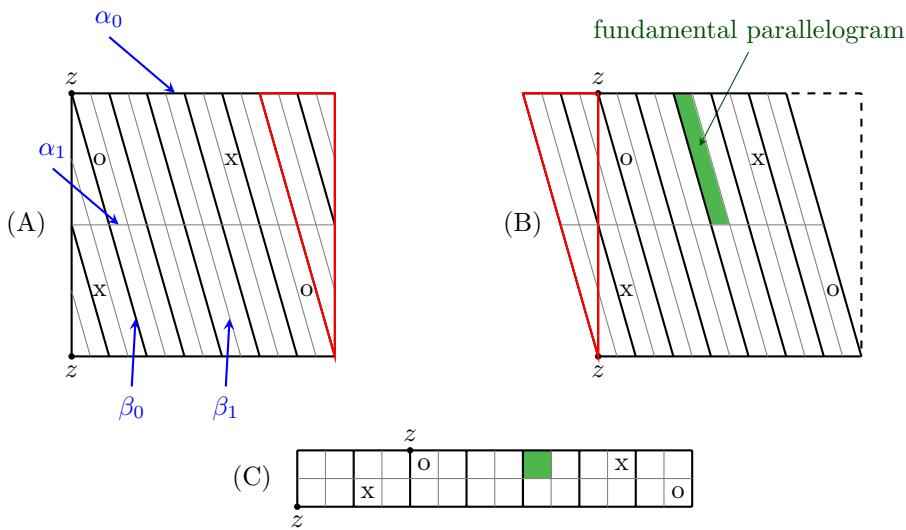

We may consider $L(p,q)$ as the quotient of $S^3\subset\bb C^2$ by the equivalence relation $(u_1,u_2)\sim(\omega_pu_1,\omega_p^qu_2)$, where $\omega_p=e^{\frac{2\pi i}p}$ (this giving the same manifold as $-p/q$ surgery on the unknot). Let $\pi:S^3\to L(p,q)$ be the quotient map. Represent points $(u_1,u_2)$ of $S^3$ in polar coordinates, setting $u_i=(r_i, \theta_i)$. The points of $L(p,q)$ can be identified with points in a fundamental domain of the cyclic action on $S^3$. Thus, since $r_2$ is determined by $r_1$ in $S^3$, we can describe $L(p,q)$ by
	\[L(p,q)=\setn{(r_1,\theta_1,\theta_2)}{r_1\in[0,1],\ \theta_1\in\left[0,2\pi\right),\ \theta_2\in\left[0,\frac{2\pi}p\right)}.\]

We identify the torus in the definition of a grid diagram with one that gives a Heegaard splitting of $L(p,q)$. Fix $\Sigma$ to be the torus in $L(p,q)$ defined by $r_1=\frac1{\sqrt 2}$. Then $\Sigma$ splits $L(p,q)$ into two solid tori $V^\alpha$ and $V^\beta$ 
\comment{
	\[V^\alpha=\setn{(r_1,\theta_1,\theta_2)}{r_1\in\left[0,\frac1{\sqrt2}\right]},\qquad\qquad V^\beta=\setn{(r_1,\theta_1,\theta_2)}{r_1\in\left[\frac1{\sqrt2},1\right]},\]
}
	with $\Sigma=\bd V^\alpha=-\bd V^\beta$.
	
An oriented link in $L(p,q)$ is associated to a grid diagram $D$ in $L(p,q)$ as follows. Identify $T$ with $-\Sigma\subset L(p,q)$ such that the $\alpha$-curves of $D$ are negatively-oriented meridians of $V^\alpha$ and the $\beta$-curves are meridians of $V^\beta$. Next connect each $X$ to the $O$ in its row by an ``horizontal'' oriented arc (from $X$ to $O$) that is embedded in $T$ and disjoint from $\vec\alpha$. Likewise, connect each $O$ to the $X$ in its column by a ``vertical'' oriented arc embedded in $T$ and disjoint from $\vec\beta$. The union of the $2n$ arcs makes a multicurve $\gamma$. Remove self-intersections of $\gamma$ by pushing the interiors of horizontal arcs up into $V^\alpha$ and the interiors of vertical arcs down into $V^\beta$. 

\begin{rem}
We note that our association of a link $K$ to a grid diagram $D$ in $L(p,q)$ differs slightly from that given in \cite{BG}, where the same grid diagram is associated to the mirror of $K$ in $L(p,q')$ with $qq'\equiv-1\mod p$. However, note that \cite{BG} provides a way to obtain our construction of $K$ from $D$. There is a related diagram called the \emph{dual grid diagram} and denoted $D^*$. If we take the link given by $D^*$ as constructed in \cite{BG} and associate the corresponding projection to $D$, this gives our construction.
\end{rem}
\begin{rem}
No part of Definition \ref{gridDefn} prohibits a marking in $\bb X$ and a marking in $\bb O$ from being in the same fundamental parallelogram. To a grid diagram that has grid number one (and so, only one marking in $\bb X$ and one marking in $\bb O$) and its two markings in the same fundamental parallelogram, we associate a knot in $L(p,q)$ that is contained in a small ball neighborhood and bounds an embedded disk. 
\label{rem:unknot}
\end{rem}

\begin{rem} Except for the case described in Remark \ref{rem:unknot}, we assume that each marking of $D$ is the center point of the fundamental parallelogram that contains it.  Let the straightened fundamental domain of $T$ have normalized coordinates $\setn{(\theta_1,\theta_2)}{\theta_1\in[0,p),\ \theta_2\in[0,1)},$ so that each $O$ and $X$ sharing the same column have the same $\theta_1$-coordinate (mod 1). 
\label{rpRem}
\end{rem}

\begin{defn}
Let $K$ be the (oriented) link associated to a grid diagram $D$ in $L(p,q)$ with grid number $n$. Then $K$ has a proper sublink $K'$ if and only if for some $0<m<n$ there are $m$ rows and $m$ columns of $D$ so that the $2m$ markings in these $m$ rows are exactly the $2m$ markings in the $m$ columns. By identifying the boundaries of these $m$ rows and $m$ columns so that their relative order is preserved, we get a grid diagram $D'$ with grid number $m$ with associated link $K'$. If this $K'$ has one component then $D'$ is called a \emph{component of $D$}.
\label{defn:compDiag}
\end{defn}

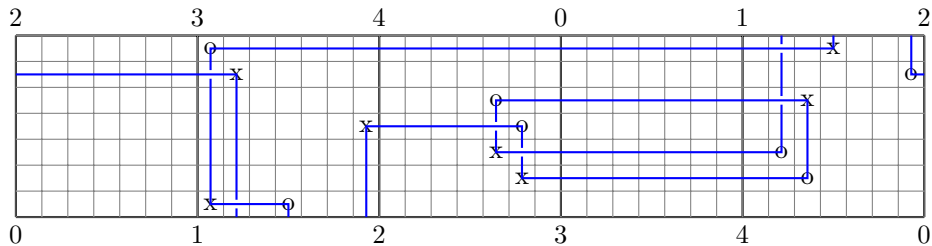
\begin{figure}[ht]
	\[\begin{tikzpicture}[scale=0.115]
		\draw[step=21,thick]
			(0,0) grid (105,21);
		\draw[gray,thin,step=3]
			(0,0) grid (105,21);	
	\foreach \o in {(22.5,19.5),(31.5,1.5),(55.5,13.5),(58.5,10.5),(88.5,7.5),(91.5,4.5),(103.5,16.5)}
		\draw
			\o node {o};
	\foreach \x in {(25.5,16.5),(22.5,1.5),(40.5,10.5),(55.5,7.5),(58.5,4.5),(91.5,13.5),(94.5,19.5)}
		\draw
			\x node {x};
	\draw[blue,thick]
		(0,16.5) -- (25.5,16.5) -- (25.5,2)
		(25.5,1) -- (25.5,0)
		(31.5,0) -- (31.5,1.5) -- (22.5,1.5) -- (22.5,16)
		(22.5,17) -- (22.5,19.5) -- (94.5,19.5) -- (94.5,21)
		(40.5,0) -- (40.5,10.5) -- (58.5,10.5) -- (58.5,8)
		(58.5,7) -- (58.5,4.5) -- (91.5,4.5) -- (91.5,13.5) -- (55.5,13.5) -- (55.5,11)
		(55.5,10) -- (55.5,7.5) -- (88.5,7.5) -- (88.5,13)
		(88.5,14) -- (88.5,19)
		(88.5,20) -- (88.5,21)
		(103.5,21) -- (103.5,16.5) -- (105,16.5);
	\foreach \l in {0,1,2,3,4}
	\draw 
		(21*\l,0) node [below] {\l};
	\draw	(105,0) node [below] {0};	
	\foreach \l in {0,1,2}
	\draw
		(21*\l+63,21) node[above] {\l};
	\foreach \l in {2,3,4}
	\draw
		(21*\l-42,21) node[above] {\l};
	
	\end{tikzpicture}\]
	\caption{A grid diagram for a link in $L(5,3)$ with corresponding grid projection.}
	\label{gdexamFig}
\end{figure}

Under the requirements of Remark \ref{rpRem}, the projection of $K$ to $-\Sigma$ (with vertical arcs crossing under horizontal arcs) is called a \emph{grid projection} associated to $D_K$ (the authors of \cite{BG} call this a rectilinear projection). Note that $K$ has an orientation given by construction and so the grid projection is also oriented. Figure \ref{gdexamFig} shows an example of a grid diagram with a corresponding grid projection.

If $D_K$ has grid number $n$ then there are $2^{2n}$ different grid projections associated to $D_K$, as there are two choices for each vertical and each horizontal arc. These choices, however, do not affect the isotopy type of the associated link. (In fact, \cite[Proposition 4.6]{BG} shows that the Legendrian isotopy class of the link is independent of the choice of grid projection.) The difference in choice of arc in a particular row or column corresponds to isotoping the link across a meridional disk of $V_\alpha$ or $V_\beta$ respectively. We will refer to such a move on a grid projection as a \emph{disk slide}.

The disk slide is a move that is undetected by grid diagrams: it changes the particular grid projection, but the underlying grid diagram remains unchanged. However, there is a set of moves on a grid diagram in $L(p,q)$ that correspond to an isotopy of the associated link and which are sufficient to interpolate between any two grid diagrams for a given link $K$. As these grid moves are used in what follows, we describe them here. The moves come in two flavors: grid (de)stabilizations and commutations. 

{\bf Grid Stabilizations and Destabilizations:} Grid stabilizations increase the grid number by one and should be thought of as adding a local kink to the knot. They are named with an X or O, depending on the type of marking at which stabilization occurs, and with NW,NE,SW, or SE, depending on the positioning of the new markings. Figure \ref{fig:stabl} shows how an X:NW stabilization and an O:SW stabilization would affect a diagram with grid number 1. Destabilizations are the inverse of a stabilization. Any (de)stabilization is a grid move that preserves the isotopy type.

\begin{figure}[ht]
	\begin{tikzpicture}[>=stealth]
	\draw[step=0.75,cm={1,0,0,1,(0,0)},thick]
		(0,0) grid (5.25,0.75);
	\draw
		(1.125,0.375) node {X}
		(4.875,0.375) node {O};
	\draw[->]
		(5.5,0.5) -- (6.5,0.5);
	\draw[step=0.75,cm={1,0,0,1,(6.7,0)},thick]
		(0,0) grid (5.25,0.75);
	\draw[gray,thin,step=0.75,cm={1,0,0,1,(7.075,0.375)}]
		(-0.375,-0.375) grid (4.875,0.375);
	\draw[cm={1,0,0,1,(6.7,0)}]
		(0.9375,0.1875) node {x}
		(1.3125,0.1875) node {o}
		(4.6875,0.5625) node {o}
		(1.3125,0.5625) node {x};
	\draw[step=0.75,cm={1,0,0,1,(0,-2)},thick]
		(0,0) grid (5.25,0.75);
	\draw[cm={1,0,0,1,(0,-2)}]
		(1.125,0.375) node {X}
		(4.875,0.375) node {O};
	\draw[->,cm={1,0,0,1,(0,-2)}]
		(5.5,0.5) -- (6.5,0.5);
	\draw[step=0.75,cm={1,0,0,1,(6.7,-2)},thick]
		(0,0) grid (5.25,0.75);
	\draw[gray,thin,step=0.75,cm={1,0,0,1,(7.075,-1.625)}]
		(-0.375,-0.375) grid (4.875,0.375);
	\draw[cm={1,0,0,1,(6.7,-2)}]
		(0.9375,0.1875) node {x}
		(1.3125+3.75,0.1875) node {o}
		(4.6875,0.5625) node {o}
		(1.3125+3.75,0.5625) node {x};

	\end{tikzpicture}
\caption{Stabilization of type X:NW (top) and of type O:SW (bottom)}
\label{fig:stabl}
\end{figure}
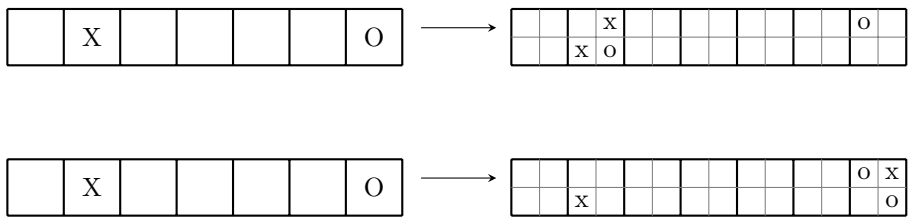

{\bf Commutations:} A commutation interchanges two adjacent columns (or rows) of the grid diagram (except as in Figure \ref{fig:illegalcommut}). Let $A$ be the annulus consisting of the two adjacent columns $c_1,c_2$ (resp.\ rows $r_1,r_2$) involved in the commutation. This annulus is sectioned into $pn$ segments of the $n$ rows (resp.\ columns) of the grid diagram. Let $s_1, s_1'$ be the two segments in $A$ containing the markings of $c_1$ (resp.\ $r_1$). If the markings of $c_2$ (resp.\ $r_2$) are contained in separate components of $A-s_1-s_1'$, the commutation is called \emph{interleaving}. If they are in the same component of $A-s_1-s_1'$ the commutation is called \emph{non-interleaving}. We note that in the literature a commutation typically refers only to what we call a non-interleaving commutation. We have extended the terminology to include the interleaving case. An example of non-interleaving commutation is shown in Figure \ref{fig:commut}.

\begin{figure}[ht]
	\[\begin{tikzpicture}[scale=0.15,>=stealth]
		\draw[step=8,thick]
			(0,0) grid (56,8);
		\draw[step=2,gray!50!white,thin]
			(0,0) grid (56,8);
	
	\draw[blue, thick]
		(5,5) -- (5,0)
		(21,8) -- (21,5.5) (21,4.5) -- (21,1) -- (7,1) -- (7,0)
		(23,8) -- (23,5.5) (23,4.5) -- (23,0)
		(39,8) -- (39,7) -- (56,7)
		(0,7) -- (3,7) -- (3,8)
		(43,0) -- (43,2.5) (43,3.5) -- (43,6.5) (43,7.5) -- (43,8)
		(27,0) -- (27,4.5) (27,5.5) -- (27,8)
		(11,0) -- (11,0.5) (11,1.5) -- (11,4.5) (11,5.5) -- (11,8)
		(51,0) -- (51,6.5) (51,7.5) -- (51,8)
		(35,0) -- (35,3) -- (49,3) -- (49,6.5) (49,7.5) -- (49,8)
		(33,0) -- (33,5) -- (5,5);
	
	\draw
		(3,7) node {x}
		(35,3) node {o}
		(5,5) node {x}
		(21,1) node {o};
	\draw
		(49,3) node {x}
		(33,5) node {o}
		(7,1) node {x}
		(39,7) node {o};
		
	\foreach \l in {0,1,2,3,4,5,6}
	\draw 
		(8*\l,0) node [below] {\l};
	\draw	(56,0) node [below] {0};	
	\foreach \l in {0,1,2,3,4,5}
	\draw
		(8*\l+16,8) node[above] {\l};
	\foreach \l in {5,6}
	\draw
		(8*\l-40,8) node[above] {\l};
	
	\foreach \c in {0,1,2,3,4,5,6}
		\draw[<->]
			(3+8*\c,9) -- (5+8*\c,9);
	\draw[step=8,thick]
		(24,-16) grid (80,-8);
	\draw[step=2,gray!50!white,thin]
		(24,-16) grid (80,-8);	

	\draw[thick,blue,cm={1,0,0,1,(24,-16)}]
		(3,5) -- (3,0)
		(19,8) -- (19,5.5) (19,4.5) -- (19,1) -- (7,1) -- (7,0)
		(23,8) -- (23,5.5) (23,4.5) -- (23,0)
		(39,8) -- (39,7) -- (56,7)
		(0,7) -- (5,7) -- (5,8)
		(45,0) -- (45,2.5) (45,3.5) -- (45,6.5) (45,7.5) -- (45,8)
		(29,0) -- (29,4.5) (29,5.5) -- (29,8)
		(13,0) -- (13,0.5) (13,1.5) -- (13,4.5) (13,5.5) -- (13,8)
		(53,0) -- (53,6.5) (53,7.5) -- (53,8)
		(37,0) -- (37,3) -- (49,3) -- (49,6.5) (49,7.5) -- (49,8)
		(33,0) -- (33,5) -- (3,5);
	\draw[cm={1,0,0,1,(24,-16)}]
		(5,7) node {x}
		(37,3) node {o}
		(3,5) node {x}
		(19,1) node {o};
	\draw[cm={1,0,0,1,(24,-16)}]
		(49,3) node {x}
		(33,5) node {o}
		(7,1) node {x}
		(39,7) node {o};
		
	\foreach \l in {0,1,2,3,4,5,6}
	\draw [cm={1,0,0,1,(24,-16)}]
		(8*\l,0) node [below] {\l};
	\draw[cm={1,0,0,1,(24,-16)}]	(56,0) node [below] {0};	
	\foreach \l in {0,1,2,3,4,5}
	\draw[cm={1,0,0,1,(24,-16)}]
		(8*\l+16,8) node[above] {\l};
	\foreach \l in {5,6}
	\draw[cm={1,0,0,1,(24,-16)}]
		(8*\l-40,8) node[above] {\l};
	\draw[->,thick]
		(8,-5) .. controls (4,-12) and (21,-12) .. (23,-12);
	
	\end{tikzpicture}\]
	\caption{A non-interleaving commutation in $L(7,2)$}
	\label{fig:commut}
\end{figure}
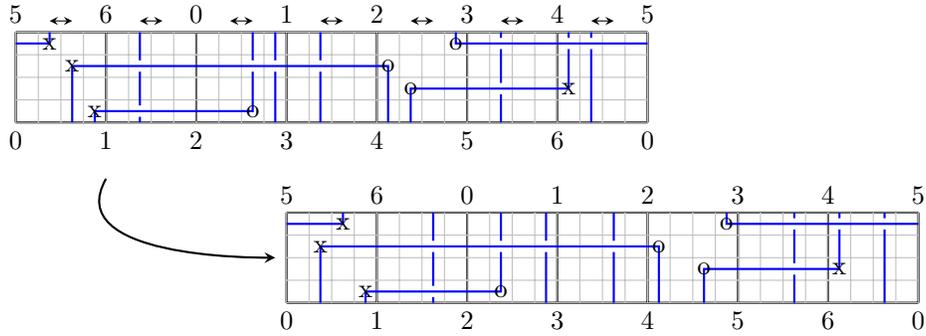

\begin{rem}
We emphasize that the definition of a commutation does not include the case depicted in Figure \ref{fig:illegalcommut}, where one row (resp.\ column) contains markings of both $c_1$ and $c_2$ (resp.\ $r_1,r_2$), for then this marking of $c_2$ is not in either component of $A-s_1-s_1'$. While such an exchange of columns or rows would not change the knot type, it does not correspond to a Legendrian isotopy. Though we do not deal in this paper with the correspondence between grid diagrams and Legendrian links, the techniques we develop here are intended to be useful for studying such. This motivates our definition of a commutation, which, if non-interleaving, is a grid move that preserves Legendrian isotopy type \cite{BG}.
\label{rem:illegalcommut}
\end{rem}

\begin{figure}[ht]
	\begin{tikzpicture}[scale=0.055,>=stealth]
		\foreach \t in {0,70}
		\draw[gray!50!white,thin]
			(0+\t,0) -- (0+\t,40)
			(10+\t,0) --(10+\t,40)
			(20+\t,0) --(20+\t,40);
		\draw[blue,thick]
			(5,0) -- (5,20) -- (15,20) -- (15,40);
		\draw[blue,thick]
			(85,0) --(85,20)-- (75,20) -- (75,40);
		\draw
			(5,20) node {x}
			(15,20)node {o};
		\draw
			(85,20)node {x}
			(75,20)node {o};
	\draw[<->] (5,45) -- (15,45);
	\draw[gray,->,line width=10pt] (63-25,20) -- (82-25,20);
	\draw[line width=6pt,white] (62-25,20) -- (73-25,20);

	\end{tikzpicture}
	\caption{A move which is neither an interleaving nor non-interleaving commutation}
	\label{fig:illegalcommut}
\end{figure}
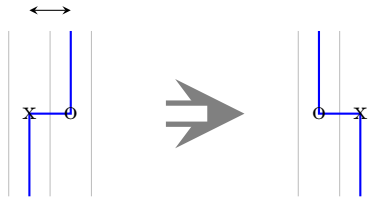

We end the section with a construction that plays an important role in what follows. Given a grid diagram $D$ in $L(p,q)$ define a grid diagram $\wt{D}$ of a link in $S^3$ as follows: cut $T$ along $\alpha_0$ to get an annulus $A$. The boundary of $A$ is a disjoint union of two copies of $\alpha_0$. Let one of these copies be $\alpha_0^+$ and the other be $\alpha_0^-$. Take $p$ copies of $A$, say $A_0,\ldots,A_{p-1}$ and glue $\alpha_0^+$ on $A_i$ to $\alpha_0^-$ on $A_{i+1 (\text{mod }p)}$ for $i=0,\ldots,p-1$ by the identity map. Note that the torus constructed from $A_0,\ldots,A_{p-1}$ covers $T$ under the cover $\pi:S^3\to L(p,q)$ and so the link associated to $\wt D$ covers the link associated to $D$. Call $\wt D$ the \emph{lift of $D$ to $S^3$}. (An example is shown in Figure \ref{fig2:example1}, where the grid diagram on the right is the lift of the grid diagram in $L(5,1)$ on the left. The link corresponding to the lift is a Hopf link.)

\begin{figure}[ht]
	\[\begin{tikzpicture}[scale=0.08]
	\draw[cm={1,0,0,1,(-70,0)}]
		(25,-10) node {{\large $D$}};
	\draw[step=10,thick,cm={1,0,0,1,(-70,20)}]
			(0,0) grid (50,10);
	\draw[gray,thin,step=5,cm={1,0,0,1,(-70,20)}]
			(0,0) grid (50,10);	
	\foreach \l in {0,1,2,3,4}
	\draw[cm={1,0,0,1,(-70,20)}] 
		(10*\l,0) node [below] {\l};
	\draw[cm={1,0,0,1,(-70,20)}]	
		(50,0) node [below] {0};	
	\foreach \l in {0,1,2,3,4}
	\draw[cm={1,0,0,1,(-70,20)}]
		(10*\l+10,10) node[above] {\l};
	\draw[cm={1,0,0,1,(-70,20)}]
		(0,10) node[above] {4};
	
	
	\foreach \x in {(42.5,7.5), (17.5,2.5)}
		\draw[cm={1,0,0,1,(-70,20)}]
		 	\x node {x};
	\foreach \o in {(2.5,7.5), (7.5,2.5)}
		\draw[cm={1,0,0,1,(-70,20)}]
			\o node {o};

	\draw
		(25,-10) node {{\large $\wt D$}};	
	\draw[step=10,thick]
			(0,0) grid (50,50);
	\draw[gray,thin,step=5]
			(0,0) grid (50,50);	
	\foreach \l in {0,1,2,3,4}
	\draw 
		(10*\l,0) node [below] {\l}
		(10*\l,50) node [above] {\l};
	\draw	
		(50,0) node [below] {0}
		(50,50) node [above] {0};	
	\foreach \c in {0,1,2,3}
	\foreach \x in {(17.5+10*\c,2.5+10*\c)}
		\draw
		 	\x node {x};
		\draw
			(7.5,42.5) node {x};
		\draw
			(42.5,7.5) node {x};
	\foreach \c in {1,2,3,4}
	\foreach \x in {(42.5-50+10*\c,7.5+10*\c)}
		\draw
		 	\x node {x};
	\foreach \c in {0,1,2,3,4}
	\foreach \o in {(2.5+10*\c,7.5+10*\c), (7.5+10*\c,2.5+10*\c)}
		\draw
			\o node {o};

	\end{tikzpicture}\]
	\caption{$\wt D$: the lift to $S^3$ of the grid diagram $D$ in $L(5,1)$.}
	\label{fig2:example1}
\end{figure}
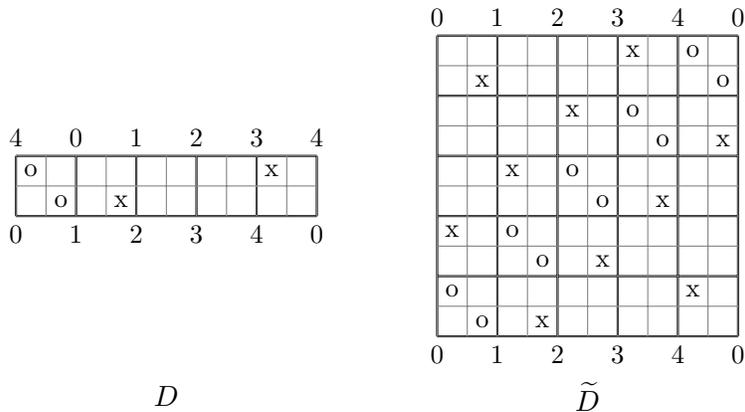

\section{Homotopy of links}
\label{sec:hty}

\begin{defn} For an oriented link $K$ in $L(p,q)$, define $\mu(K)$ to be the homology class of $K$ in $H_1(L(p,q))$.
\label{defn:mu}
\end{defn}

\begin{rem} Let $K, K'$ be oriented knots. We note that $\mu(K)=\mu(K')$ if and only if $K$ and $K'$ are freely homotopic, since the free homotopy class of an oriented knot in $L(p,q)$ is determined by its homology class.
\label{rem:mu}
\end{rem}

Let $C$ be the core of the handlebody $V_\alpha$ in $L(p,q)$ (dual to a meridional disk of $V_\alpha$). Then $H_1(L(p,q))\cong\ints/p$ is cyclically generated by $\gamma=[C]\in H_1(L(p,q))$. We identify $\mu(K)$ with the multiple of $\gamma$ it represents.

Suppose $D_K=(T,\vec\alpha,\vec\beta,\bb O,\bb X)$ is a grid diagram with associated link $K$ in $L(p,q)$. Orient the $\alpha$ and $\beta$ curves so that the algebraic intersection $\alpha_i\cdot\beta_j$ is positive on $T$, for all pairs $i,j$. In all subsequent figures, this means that each $\alpha_i$ is oriented to the right and each $\beta_j$ is oriented upwards. Let $R_K$ be a grid projection for $D_K$ with its given orientation. We can compute $\mu(K)$ from the grid diagram as follows.

\begin{lem}
Given $D_K$ and $R_K$ as above, choose an $\alpha$-curve $\alpha_i$ in the grid diagram. Then $\mu(K)$ is equal (mod $p$) to $\alpha_i\cdot R_K$, the algebraic intersection of $\alpha_i$ with $R_K$.
\label{lem:monoRP}
\end{lem}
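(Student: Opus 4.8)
The plan is to identify $\mu(K)$ with a homology class that can be read off from a meridian disk of $V_\alpha$. Recall that $K$ is constructed from the grid projection $R_K$, viewed as an immersed oriented multicurve on $\Sigma$, by pushing the interiors of the horizontal arcs into $V_\alpha$ and the interiors of the vertical arcs into $V_\beta$. Each of these pushes is supported in a collar of $\Sigma$, hence is a free homotopy in $L(p,q)$; running all of them gives a free homotopy from $R_K$ (regarded as a map of circles into $\Sigma\subset L(p,q)$) to $K$. Therefore $\mu(K)=[K]=j_*[R_K]$, where $[R_K]\in H_1(\Sigma)$ and $j_*\colon H_1(\Sigma)\to H_1(L(p,q))$ is induced by the inclusion $\Sigma\hookrightarrow L(p,q)$.

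Next I would factor $j_*$ through $V_\alpha$ as $j_*=(j_\alpha)_*\circ(i_\alpha)_*$, using $\Sigma=\bd V_\alpha\hookrightarrow V_\alpha\hookrightarrow L(p,q)$. Since $V_\alpha$ is a solid torus, $H_1(V_\alpha)\cong\ints$ is generated by $[C]$, and for a curve $c$ on $\bd V_\alpha$ the class $(i_\alpha)_*[c]$ equals (the winding number of $c$ in $V_\alpha$) times $[C]$. The kernel of $(i_\alpha)_*$ is generated by the meridian class, and the $\alpha_i$ are pairwise isotopic meridians of $V_\alpha$, so this winding number is $\alpha_i\cdot c$ for \emph{any} index $i$ (computing, say, in a basis $[\alpha_i],\lambda$ of $H_1(\Sigma)$ with $\alpha_i\cdot\lambda=1$); this is also why the statement does not depend on the choice of $\alpha_i$. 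Taking $c=R_K$ gives $(i_\alpha)_*[R_K]=(\alpha_i\cdot R_K)[C]$, and since $(j_\alpha)_*[C]=\gamma$ by the definition of $\gamma$, we conclude $\mu(K)=(\alpha_i\cdot R_K)\,\gamma$, i.e.\ $\mu(K)\equiv\alpha_i\cdot R_K\pmod p$.

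The only genuinely delicate point is sign bookkeeping: one must verify that the orientation conventions already in force — the $\alpha_i$ oriented so that $\alpha_i\cdot\beta_j>0$, together with the fixed orientations of $C$ and $\Sigma$ — make the proportionality constant exactly $+1$ rather than $-1$; concretely, one chooses the longitude $\lambda$ of $V_\alpha$ with $\alpha_i\cdot\lambda=+1$ and checks that $\lambda$ is homologous to $C$ (not $-C$) in $V_\alpha$. It is also worth remarking, as a consistency check, that a disk slide changes $R_K$ only by adding a curve homologous on $\Sigma$ to a $\beta$-curve, and hence changes $\alpha_i\cdot R_K$ by a multiple of $\alpha_i\cdot\beta_j=\pm p$; thus the integer $\alpha_i\cdot R_K$ is well defined modulo $p$ independently of the grid projection chosen, in agreement with $\mu(K)\in\ints/p$. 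Everything else — the collar homotopy and the winding-number identity for curves on the boundary of a solid torus — is standard.
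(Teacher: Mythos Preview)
Your argument is correct and follows essentially the same route as the paper's proof: both reduce the computation of $\mu(K)$ to a winding number in the solid torus $V_\alpha$, using that $\alpha_i$ is a meridian. The paper phrases this geometrically by pushing the horizontal arcs of $R_K$ into $V_\alpha$ to obtain an isotopic link $K'\subset V_\alpha$ and reading off its winding number directly, whereas you phrase the same idea homologically by factoring $j_*$ through $H_1(V_\alpha)$; your additional remarks on sign conventions and disk slides are sound but go beyond what the paper records.
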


\begin{proof}
Push the interiors of horizontal arcs on $R_K$ slightly into $V_\alpha$ to get a knot $K'$ contained in $V_\alpha$. The winding number of $K'$ in $V_\alpha$ is clearly counted by $\alpha_i\cdot R_K$ and $K'$ is isotopic to $K$, so $\mu(K')=\mu(K)$.
\end{proof}


\begin{figure}[ht]
	\[\begin{tikzpicture}[scale=0.3,thick,>=stealth]
		\draw [cm={1,0,0,1,(0,3)}]
			(5,10) -- (5,14.5) (5,15.5) -- (5,20)
			(0,15) -- (10,15);
		\draw[cm={1,0,0,1,(0,3)}]
			(20,16) -- (25,16) -- (25,15.5) (25,14.5) -- (25,14) -- (30,14)
			(26,20) -- (26,15) -- (24,15) -- (24,10);
		
		\foreach \c in {5,25}
		\draw
			(\c,15+3) node[circle,draw,dashed,minimum size=40,label=135:{$W$}]{};
			
		\foreach \x in {4,7}
		\draw[gray,thin] 
			(0,\x) -- (11,\x)
			(\x,0) -- (\x, 11);
		\draw
			(0,5.5) node {X}
			(5.5,0) node {X}
			(11,5.5) node {O}
			(5.5,11) node {O};

		\foreach \x in {4,5,6,7}
		\draw[gray,thin] 
			(20,\x) -- (31,\x)
			(\x+20,0) -- (\x+20, 11);
		
		\foreach \r in {0,90}
		\draw[rotate around={\r:(25.5,5.5)}]
			(20,6.5) node {x}
			(25.5,6.5) node {o}
			(25.5,4.5) node {x}
			(31,4.5) node {o};	
		
\foreach \tx in {17,5}
	\draw[gray,->,line width=9pt] (13.5,\tx) -- (16.7,\tx);
\foreach \tx in {17,5}
	\draw[line width=5pt,white] (13.4,\tx) -- (14.7,\tx);

	\end{tikzpicture}\]
\caption{A crossing change on a grid projection}
\label{fig:rectCrossChange}
\end{figure}
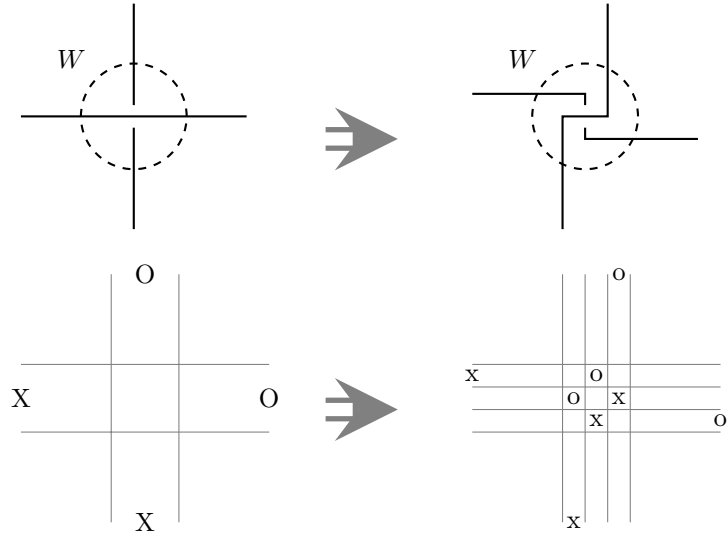

\subsection{Skein Theory}
\label{sec:skTheory}
We now develop a skein theory of grid diagrams in $L(p,q)$. Note that grid diagrams have been discussed in the setting of singular links in $S^3$, and used to extend link Floer homology to singular links \cite{Aud}. What we describe here as grid diagrams that differ by a ``skein crossing change'' are exactly the grid diagrams in \cite{Aud} that correspond to the two resolutions of the double point of a 1-singular link (see, for example, Figure 10 in \cite{Aud}).

If two links $L_+$ and $L_-$ differ as described in (\ref{eqn:resl'ns}) they are isotopic to links with associated grid diagrams differing by the grid move in Figure \ref{fig:rectCrossChange}. The next lemma refines this. In a figure of a grid diagram, slanted gray bars indicate some number of columns (and their markings) which may be between the columns of interest.

\begin{lem}
Let $L, L'$ be links corresponding to grid diagrams that differ by an interleaving commutation. Then $L, L'$ differ by a crossing change.
\label{trivlinksLem}
\end{lem}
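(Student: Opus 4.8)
The plan is to show that the two grid projections associated to grid diagrams differing by an interleaving commutation can be isotoped in $L(p,q)$ so that they agree everywhere except inside a single ball, where one contains a positive crossing and the other a negative crossing; this is exactly the relationship between $L_+$ and $L_-$ described in (\ref{eqn:resl'ns}).

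First I would set up the local picture. Let $A$ be the annulus made of the two adjacent columns $c_1, c_2$ being interchanged, sitting inside the (straightened) torus $T$. Since the commutation is interleaving, the two segments $s_1, s_1'$ carrying the markings of $c_1$ separate $A$ into two components, and the two markings of $c_2$ lie in different components. I would choose a grid projection $R$ for $D$ and a grid projection $R'$ for $D'$ (the commuted diagram) that agree outside the annulus $A$ — this is possible because the vertical and horizontal arcs away from $A$ are unaffected, and disk slides let me match up the arcs entering and leaving $A$. Inside $A$, the portions of $R$ and of $R'$ each consist of two vertical strands (pieces of the link running through the two columns) joined to the horizontal arcs at the four markings. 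The key observation is that passing from $R$ to $R'$ slides one marked point of $c_2$ past the strand of $c_1$: because of the interleaving condition, exactly one of the four ``corners'' in $A$ is forced to cross over or under a strand, producing a single crossing change, whereas a non-interleaving commutation can be realized by an ambient isotopy inside $A$ with no crossing change at all.

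Concretely, I would argue that the sub-tangles of $R$ and $R'$ inside $A$ are related by pulling one arc across one other arc through an embedded disk $W$ meeting the projection in the expected ``X'' pattern (cf.\ Figure \ref{fig:rectCrossChange}), and that outside of a ball neighborhood of $W$ the two projections literally coincide. Since crossings of a grid projection come from vertical arcs passing under horizontal arcs, I would track which strand is over and which is under before and after, and check that the local models are precisely $L_+$ and $L_-$ — that is, the tangent framing at the double point, together with the orientation of $L(p,q)$, picks out which resolution is which, consistent with the convention fixed after (\ref{eqn:resl'ns}). It then follows that $L$ and $L'$ are obtained from a common $1$-singular link by the two resolutions, i.e.\ they differ by a crossing change.

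The main obstacle I anticipate is the bookkeeping required to verify that the interleaving hypothesis genuinely forces exactly one crossing change — neither zero (as in the non-interleaving case) nor two — and to do this uniformly regardless of how the ``intermediate'' columns (the gray bars) and the rest of the diagram sit. I would handle this by reducing to the case where there are no intervening columns (commutations past such columns are non-interleaving and hence do not affect the count), and then checking the finitely many combinatorial configurations of the four markings of $c_1, c_2$ within the annulus $A$ under the interleaving constraint, showing each yields the local crossing-change picture of Figure \ref{fig:rectCrossChange} up to disk slides and planar isotopy.
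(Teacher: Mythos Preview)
Your approach is sound in outline but takes a genuinely different route from the paper. The paper does not argue directly on projections; instead it exhibits an explicit sequence of grid moves (Figure~\ref{interlvcaseFig}). Starting from one diagram, it performs the local crossing change of Figure~\ref{fig:rectCrossChange} (which stabilizes twice), and then applies a chain of non-interleaving commutations and two destabilizations to arrive at the interleaving-commuted diagram. Since every move after the first preserves isotopy type, the two original links differ by exactly that crossing change. The virtue of this argument is that it stays entirely inside the grid-diagram calculus---no appeal to projections or disk slides---which matters for the Legendrian applications the paper has in mind (cf.\ Remark~\ref{rem:illegalcommut}).

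Your direct geometric argument, showing that the two projections agree outside a ball and differ by a single over/under swap inside, is the more intuitive route and should also work. One point of confusion: your plan to ``reduce to the case where there are no intervening columns'' does not parse as stated, since the two columns (or rows) being commuted are already adjacent by definition. The gray bars in the paper's figures are not columns sitting \emph{between} the two rows but rather the pieces of \emph{other} columns passing \emph{through} them, and you cannot commute those away. What you actually need to verify is that each horizontal arc traversing both columns sees no net change under the swap, while the interleaving hypothesis forces exactly one genuine strand-through-strand pass between the two vertical arcs; your proposed case analysis at the end would cover this, but the reduction step as written should be dropped.
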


\comment{
\begin{figure}[ht]
	\begin{tikzpicture}[scale=0.8]
\foreach \tx in {0,8}
\foreach \ty in {0}
	\foreach \c in {0,...,3}
	\draw[step=1,gray!50!white,thin,cm={1,0,0,1,(1.8*\c+\tx,\ty)}]
		(-0.2,0) grid (1.2,2.25);
\foreach \x in {0,1}
	\draw[blue,thick]
		(0.5+1.8*\x,1.5-\x) -- (4.1+1.8*\x,1.5-\x)
		(0.5+1.8*\x,1.5-\x) -- (0.5+1.8*\x,0)
		(4.1+1.8*\x,1.5-\x) -- (4.1+1.8*\x,0)
		(8.5+1.8*\x,0.5+\x) -- (12.1+1.8*\x,0.5+\x)
		(8.5+1.8*\x,0.5+\x) -- (8.5+1.8*\x,0)
		(12.1+1.8*\x,0.5+\x) -- (12.1+1.8*\x,0);
	\draw[white,very thick]
		(4,0.5) -- (4.09,0.5)	(4.11,0.5) -- (4.2,0.5)
		(10.2,0.5) -- (10.29,0.5)	(10.31,0.5) -- (10.4,0.5);
\foreach \tx in {0,8}
\foreach \ty in {0,0.4,0.8,1.2,1.6}
	\foreach \b in {1.2,3,4.8}
	\fill[gray,cm={1,0,0,1,(\tx,\ty)}]
		(\b,0) -- (\b,0.2) -- (\b+0.4,0.5) -- (\b+0.4,0.3) -- cycle;
\foreach \tx in {0,8}
\foreach \ty in {0.2,0.6,1,1.4}
	\foreach \b in {1.2,3,4.8}
	\fill[white,cm={1,0,0,1,(\tx,\ty)}]
		(\b,0) -- (\b,0.2) -- (\b+0.4,0.5) -- (\b+0.4,0.3) -- cycle;
\foreach \tx in {0,8}
	\foreach \b in {1.2,3,4.8}
	\fill[gray,cm={1,0,0,1,(\tx,0)}]
		(\b,2.25) -- (\b+0.33,2.25) -- (\b,2) -- cycle;
\foreach \tx in {0,8}
	\foreach \b in {1.2,3,4.8}
	\fill[gray,cm={1,0,0,1,(\tx,0)}]
		(\b+0.4,0) -- (\b+0.4,0.1) -- (\b+0.277,0) -- cycle;
\comment{
\foreach \x in {0,1}	
	\draw[blue,thick]
		(0.5+1.8*\x,4.5+\x) -- (4.1+1.8-1.8*\x,5.5+\x-1)
		(8.5+1.8*\x,5.5-\x) -- (12.1+1.8-1.8*\x,4.5+1-\x)
		(0.5+1.8*\x,4.5+\x) -- (0.5+1.8*\x,4)
		(4.1+1.8*\x,5.5-\x) -- (4.1+1.8*\x,4)
		(8.5+1.8*\x,5.5-\x) -- (8.5+1.8*\x,4)
		(12.1+1.8*\x,4.5+\x) -- (12.1+1.8*\x,4);
	\draw[white,very thick]
		(2.2,4.5) -- (2.29,4.5)	(2.31,4.5) -- (2.4,4.5)
		(4,4.5) -- (4.09,4.5)	(4.11,4.5) -- (4.2,4.5);
\foreach \x in {0,1}	
	\draw
		(0.5+1.8*\x,4.5+\x) node {X}
		(4.1+1.8*\x,5.5-\x) node {O}
		(8.5+1.8*\x,5.5-\x) node {X}
		(12.1+1.8*\x,4.5+\x) node {O};
}
\foreach \x in {0,1}	
	\draw
		(0.5+1.8*\x,1.5-\x) node {X}
		(4.1+1.8*\x,1.5-\x) node {O}
		(8.5+1.8*\x,0.5+\x) node {X}
		(12.1+1.8*\x,0.5+\x) node {O};

	\end{tikzpicture}
\caption{Grid diagrams before and after interleaving commutation of rows.}
\label{homcommutFig}
\end{figure}
}

\begin{figure}[ht]
	\begin{tikzpicture}[scale=0.8,>=stealth]
\foreach \tx in {0,8}
\foreach \ty in {0,-4,-8,-12}
	\foreach \c in {0,...,3}
	\draw[step=1,gray,thin,cm={1,0,0,1,(1.8*\c+\tx,\ty)}]
		(-0.2,0) grid (1.2,2.25);
	\foreach \c in {1/3,2/3}
	\foreach \ty in {0,-4}
	\foreach \tx in {0,-8}
	\draw[gray,thin]
		(11.6+\c+\tx,0+\ty+0.5*\tx) -- (11.6+\c+\tx,2.25+\ty+0.5*\tx)
		(7.8+\tx,\c+\ty+0.5*\tx) -- (14.6+\tx,\c+\ty+0.5*\tx);
	\foreach \ty in {0,-4}
	\draw[gray,thin,cm={1,0,0,1,(2*\ty,-8+\ty)}]
		(12.1-1.8/4*\ty,0) -- (12.1-1.8/4*\ty,2.25)
		(7.8,0.5) -- (14.6,0.5);
\foreach \tty in {0,-4,-8,-12}
\foreach \tx in {0,8}
\foreach \ty in {0,0.4,0.8,1.2,1.6}
	\foreach \b in {1.2,3,4.8}
	\fill[gray,cm={1,0,0,1,(\tx,\ty+\tty)}]
		(\b,0) -- (\b,0.2) -- (\b+0.4,0.5) -- (\b+0.4,0.3) -- cycle;
\foreach \tty in {0,-4,-8,-12}
\foreach \tx in {0,8}
\foreach \ty in {0.2,0.6,1,1.4}
	\foreach \b in {1.2,3,4.8}
	\fill[white,cm={1,0,0,1,(\tx,\ty+\tty)}]
		(\b,0) -- (\b,0.2) -- (\b+0.4,0.5) -- (\b+0.4,0.3) -- cycle;
\foreach \tty in {0,-4,-8,-12}
\foreach \tx in {0,8}
	\foreach \b in {1.2,3,4.8}
	\fill[gray,cm={1,0,0,1,(\tx,\tty)}]
		(\b,2.25) -- (\b+0.33,2.25) -- (\b,2) -- cycle;
\foreach \tty in {0,-4,-8,-12}
\foreach \tx in {0,8}
	\foreach \b in {1.2,3,4.8}
	\fill[gray,cm={1,0,0,1,(\tx,\tty)}]
		(\b+0.4,0) -- (\b+0.4,0.1) -- (\b+0.277,0) -- cycle;
	\draw
		(0.5,1.5) node {X}
		(4.1,1.5) node {O}
		(2.3,0.5) node {X}
		(5.9,0.5) node {O};
	\draw
		(8.5,1.5) node {X}
		(10.3,5/6) node {x}
		(12.1,1/6) node {x}
		(12.1+1/3,0.5) node {x}
		(12.1-1/3,0.5) node {o}
		(12.1,5/6) node {o}
		(12.1+1/3,1.5) node {o}
		(13.9,1/6) node {o};
	\draw[cm={1,0,0,1,(-8,-4)}]
		(8.5,5/6) node {x}
		(10.3,1.5) node {X}
		(12.1,1/6) node {x}
		(12.1+1/3,0.5) node {x}
		(12.1-1/3,0.5) node {o}
		(12.1,1.5) node {o}
		(12.1+1/3,5/6) node {o}
		(13.9,1/6) node {o};
	\draw[cm={1,0,0,1,(0,-4)}]
		(8.5,5/6) node {x}
		(10.3,1.5) node {X}
		(12.1+1/3,1/6) node {x}
		(12.1,0.5) node {x}
		(12.1-1/3,0.5) node {o}
		(12.1+1/3,1.5) node {o}
		(12.1,5/6) node {o}
		(13.9,1/6) node {o};
	\draw[cm={1,0,0,1,(-8,-8)}]
		(8.5,0.5) node {x}
		(10.3,1.5) node {X}
		(12.1+1/3,5/6) node {x}
		(12.1,1/6) node {x}
		(12.1-1/3,1/6) node {o}
		(12.1+1/3,1.5) node {o}
		(12.1,0.5) node {o}
		(13.9,5/6) node {o};
	\draw[cm={1,0,0,1,(0,-8)}]
		(8.5,0.25) node {X}
		(10.3,1.5) node {X}
		(12.1+0.25,0.75) node {X}
		(12.1-0.25,0.25) node {O}
		(12.1+0.25,1.5) node {O}
		(13.9,0.75) node {O};
	\draw[cm={1,0,0,1,(-8,-12)}]
		(8.5,0.25) node {X}
		(10.3,1.5) node {X}
		(13.9-0.25,0.75) node {X}
		(12.1,0.25) node {O}
		(13.9-0.25,1.5) node {O}
		(13.9+0.25,0.75) node {O};
	\draw[cm={1,0,0,1,(0,-12)}]
		(8.5,0.5) node {X}
		(10.3,1.5) node {X}
		(12.1,0.5) node {O}
		(13.9,1.5) node {O};

	\draw[thick,red,<->]
		(14.7,1.5) --(14.7,5/6);
	\draw[thick,red,<->]
		(4,2.4-4)--(4.5,2.4-4);
	\draw[thick,red,<-]
		(14.7,-2.5-2/3)--(14.7,1/6-4);
	\foreach \a in {0,0.38}
	\draw[thick,red,<-, rotate around={90:(7.7,0.05+\a)},cm={1,0,0,1,(-0.3,1.65+\a)}]
		(4.05,2.4-4)--(4.4,2.4-4);
	\draw[very thick,red]
		(4.1-1/6,1/3-8) node [circle,draw,minimum size=20]{};
	\foreach \a in {0,0.75}
	\draw[very thick,red,->]
		(12.55,-6.5-\a) -- (13.3,-6.5-\a);
	\draw[very thick,red]
		(5.9,1.1-12) node [circle,draw,minimum size=30]{};
	\draw[red]
		(5,-9) node {\text{destabilize}};
	\draw[->, red]
		(5,-8.7) -- (4.13,-8.2);
	\draw[->, red]
		(5.4,-9.2) -- (5.75,-10.1);
	\foreach \l in {1,3,5,7}
	\draw
		(0,2.7-2*\l+2) node [circle] {\begin{Large}{\bf \l}\end{Large}};
	\foreach \l in {2,4,6,8}
	\draw
		(8,2.7-2*\l+4) node [circle] {\begin{Large}{\bf \l}\end{Large}};
	\end{tikzpicture}
\caption{${\bf1\to2}$:crossing change; ${\bf2\to3}$:row commutation; ${\bf3\to4}$:column commutation; ${\bf4\to5}$:two row commutations; ${\bf5\to6}$:destabilization; ${\bf6\to7}$:many column commutations; ${\bf7\to8}$:destabilization}
\label{interlvcaseFig}
\end{figure}

\begin{proof}
We prove the statement for row commutation. The case with columns is similar.

The proof is given by the sequence of grid moves detailed in Figure \ref{interlvcaseFig}, where arrows indicate commutations to be performed, and a triple of markings that is to be destabilized is circled. Referring to Figure \ref{interlvcaseFig}, there are 8 steps. From step 1 to 2 we perform a crossing change. From 2 to 5 we perform a number of non-interleaving commutations. Going from 5 to 6 is destabilization. From 6 to 7 involves several commutations. Each of these commutations is non-interleaving since the markings of the column being moved to the right are in adjacent rows. Finally we destabilize from 7 to 8.

Note that all grid moves above correspond to isotopy of the link, except the first which, as in Figure \ref{fig:rectCrossChange}, corresponds to interchanging the two resolutions of a singular link defined by (\ref{eqn:resl'ns}).
\end{proof}

Lemma \ref{trivlinksLem} shows that if two grid diagrams differ only by the interleaving commutation of two adjacent columns (or rows), then the corresponding links are some pair $L_+, L_-$. Motivated by this result, we make the following definition.

\begin{defn}
A pair of adjacent columns in a grid diagram for $L(p,q)$ is called a \emph{skein crossing} if their commutation is an interleaving commutation. Define a \emph{skein crossing change} to be the commutation of a skein crossing pair. A skein crossing is called \emph{positive} if it appears as in the left side of Figure \ref{fig:signSkCross} under some cyclic permutation of the rows and columns. It is \emph{negative} if (after some cyclic permutation) it appears as in the right side of Figure \ref{fig:signSkCross}. Given two grid diagrams that differ only by a skein crossing change, we call the diagram with the positive skein crossing $D_+$ and the diagram with the negative skein crossing $D_-$.
\label{defn:skCross}
\end{defn} 

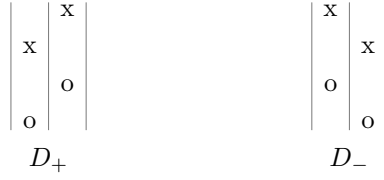
\begin{figure}[ht]
	\[\begin{tikzpicture}
\foreach \t in {2,6}
	\draw[gray,thin]
		(\t-0.25,-0.1) -- (\t-0.25,1.6)
		(\t+0.25,-0.1) -- (\t+0.25,1.6)
		(\t+0.75,-0.1)-- (\t+0.75,1.6);
	\draw
		(2,0) node {o}
		(2,1) node {x}
		(2.5,0.5) node {o}
		(2.5,1.5) node {x};
	\draw
		(2.25,-0.5) node {$D_+$};
	\draw
		(6,1.5) node {x}
		(6,0.5) node {o}
		(6.5,0) node {o}
		(6.5,1) node {x};
	\draw
		(6.25,-0.5) node {$D_-$};
	\end{tikzpicture}\]
\caption{Positive and negative skein crossings.}
\label{fig:signSkCross}
\end{figure}

There is a function on grid diagrams that is related to the idea of skein crossings. We will use this function later, both to define trivial links and to produce a complexity function on grid diagrams that will prove Theorem \ref{mainThm}.

\begin{defn}
Consider a (not necessarily adjacent) pair of columns $c_1, c_2$ in a grid diagram of $L(1,0)=S^3$. Let $R_i(\text o)$ be the row in the diagram that contains the $\bb O$ marking of $c_i$ and let $R_i(\text x)$ be the row containing the $\bb X$ marking of $c_i$. Call the columns $c_1,c_2$ \emph{interleaving} if $R_2(\text o)$ and $R_2(\text x)$ are in different annular components of $T-(R_1(\text o)\cup R_1(\text x))$. Note that if $c_1,c_2$ are adjacent and interleaving then they comprise a skein crossing.

Given a grid diagram $D$ for a link in $L(p,q)$, define $scr(D)$ to be the number of interleaving pairs of columns in $\wt D$, the lift of $D$ to $S^3$. 
\label{defn:scr}
\end{defn}

Note that, despite the name of the function, $scr(D)$ does not count the number of skein crossings of $\wt D$. Instead, it counts all pairs of columns of $\wt D$ which, if they were adjacent, would make a skein crossing. Let us point out a few important properties of the function $scr$.

\begin{prop} Let $\scr{D}(L(p,q))$ denote the set of all grid diagrams in $L(p,q)$. The function $scr:\scr D(L(p,q))\to\ints$ satisfies the following:
	\begin{enumerate}
		\item (Orientation invariance) Given a grid diagram $D$, let $rD$ denote the grid diagram obtained by exchanging every $\bb O$ marking for an $\bb X$ marking and vice versa. Then $scr(D)=scr(rD)$.
		\item (Column commutation invariance) Let $D, D'$ be two grid diagrams that only differ by a column commutation. Then $scr(D)=scr(D')$. Note that this is not true for row commutation. 
	\end{enumerate}
\label{prop:scrProperties}
\end{prop}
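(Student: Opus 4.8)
The plan is to prove each invariance property directly from Definition~\ref{defn:scr}, working with the lift $\wt D$ to $S^3$ and the combinatorial description of ``interleaving pairs of columns'' there. Recall that for a pair of columns $c_1, c_2$ in $\wt D$, being interleaving is a condition phrased entirely in terms of which annular components of $T - (R_1(\text o)\cup R_1(\text x))$ contain the markings of $c_2$; it is a symmetric relation in $c_1, c_2$ (one checks: $R_2(\text o), R_2(\text x)$ separated by $\{R_1(\text o), R_1(\text x)\}$ on the circle of rows iff $R_1(\text o), R_1(\text x)$ separated by $\{R_2(\text o), R_2(\text x)\}$, since four distinct points on a circle either ``link'' as two pairs or they do not). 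So $scr(D)$ counts unordered interleaving pairs.

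\emph{Part (1), orientation invariance.} Passing from $D$ to $rD$ swaps the roles of $\bb O$ and $\bb X$ markings, hence on the lift swaps $\wt{D}$ for $\wt{rD} = r\wt D$. For a fixed pair of columns $c_1, c_2$, this exchange swaps $R_i(\text o) \leftrightarrow R_i(\text x)$ for each $i$, i.e. it exchanges the two points of the pair $\{R_1(\text o), R_1(\text x)\}$ with each other (and likewise for $c_2$). The interleaving condition — that the two points of one pair lie in different components of the complement of the other pair — is manifestly unchanged by relabeling which point of each pair is which. Hence each pair is interleaving in $\wt D$ iff it is interleaving in $r\wt D$, and $scr(D) = scr(rD)$. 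This part is essentially a triviality once the definition is unwound.

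\emph{Part (2), column commutation invariance.} Here $D$ and $D'$ differ by commuting two adjacent columns; this lifts to $\wt D$ and $\wt D'$ differing by commuting the corresponding $p$ pairs of adjacent columns (one in each copy $A_i$ of the annulus), or — after the usual reduction — it suffices to analyze the effect of a single adjacent column swap in $S^3$. A column commutation moves no markings between rows: each column keeps its own $R_i(\text o)$ and $R_i(\text x)$. Therefore, for \emph{any} pair of columns not among the two being swapped, the interleaving status is literally unchanged, and for a pair consisting of one swapped column and one other column the status is also unchanged (again because the rows containing the relevant markings are untouched). The only pair that could conceivably change status is the pair consisting of the two columns being commuted themselves — but for \emph{that} pair the interleaving condition is symmetric and depends only on the (unordered) sets of rows $\{R_1(\text o),R_1(\text x)\}$ and $\{R_2(\text o),R_2(\text x)\}$, which are unaffected by swapping the horizontal positions of the two columns. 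Hence $scr$ is unchanged. I would also include the one-sentence contrast showing why this fails for row commutation: a row commutation moves markings between rows (it changes which row contains a given $\bb O$ or $\bb X$), so it genuinely alters the partition of markings into annular components and can change the interleaving count.

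\emph{Main obstacle.} There is no deep obstacle; the work is entirely in setting up the bookkeeping cleanly. The one point that needs care is the symmetry of the interleaving relation (so that ``pair'' is well-defined as an unordered notion and the column-commutation argument covers the swapped pair itself) — I would state and prove this symmetry as a short preliminary observation about four cyclically-ordered points on a circle. A second minor point requiring attention is making precise how a single column commutation in $L(p,q)$ lifts to commutations in $\wt D$: it produces $p$ disjoint adjacent-column swaps in the covering diagram, so that the invariance for one swap in $S^3$ propagates to give invariance upstairs, hence the claimed equality $scr(D)=scr(D')$.
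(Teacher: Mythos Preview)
Your proposal is correct and follows essentially the same approach as the paper's proof: orientation invariance because interleaving depends only on the unordered pair of rows occupied by each column's markings, and column commutation invariance because a column commutation in $D$ lifts to $p$ column commutations in $\wt D$, none of which change the rows occupied by any column's markings. Your write-up is more careful than the paper's (you make explicit the symmetry of the interleaving relation and do a case analysis on which columns of a pair are among the swapped ones), but the underlying argument is the same.
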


\begin{proof} The fact that orientation invariance holds is just the observation that the property of a pair of columns of $\wt D$ being interleaving depends only on the relative positions of their markings, independent of which is in $\bb O$ and which is in $\bb X$.

To prove the second property, first note that an adjacent pair of columns in $D$ will correspond to $p$ adjacent pairs of columns in $\wt D$. Thus, if $D$ and $D'$ differ by a column commutation, then $\wt D$ and $\wt{D'}$ differ by $p$ column commutations. Given a column $c$ of $\wt D$, if $R(\text o)$ and $R(\text x)$ are the rows that its markings occupy, then its markings are still in rows $R(\text o)$ and $R(\text x)$ after column commutation. It is clear then from Definition \ref{defn:scr} that the number of columns in $\wt D$ that are interleaving with $c$ is the same before and after a commutation with a column adjacent to $c$.
\end{proof}

We now consider how resolutions fit into the skein theory of grid diagrams. To begin, let us define what we mean by such a resolution. 

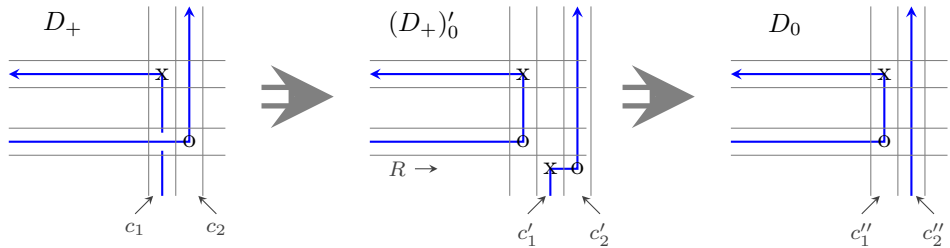
\begin{figure}[ht]
	\[\begin{tikzpicture}[>=stealth,scale=1.2]

	\draw[blue,thick,->,cm={1,0,0,1,(0,0)}]
		(0,0) -- (2,0) -- (2,1.5);
	\draw[blue,thick,cm={1,0,0,1,(0,0)}]
		(1.7,-0.6) -- (1.7,-0.1);
	\draw[blue,thick,->,cm={1,0,0,1,(0,0)}]
		(1.7,0.1) -- (1.7,0.75) -- (0,0.75);
	\draw[blue,thick,->,cm={1,0,0,1,(4,0)}]
		(0,0) -- (1.7,0) -- (1.7,0.75) -- (0,0.75);
	\draw[blue,thick,->,cm={1,0,0,1,(4,0)}]
		(2,-0.6) -- (2,-0.3) -- (2.3,-0.3) -- (2.3,1.5);
	\draw[blue,thick,->,cm={1,0,0,1,(8,0)}]
		(0,0) -- (1.7,0) -- (1.7,0.75) -- (0,0.75);
	\draw[blue,thick,->,cm={1,0,0,1,(8,0)}]
		(2,-0.6) -- (2,1.5);
	\foreach \t in {(0,0),(4,0),(8,0)}
	\foreach \v in {0,0.3,0.6}
		\draw[gray,thin,cm={1,0,0,1,\t}]
		(1.55+\v,-0.6) -- (1.55+\v,1.5);
	\draw[gray,thin,cm={1,0,0,1,(4,0)}]
		(2.45,-0.6) -- (2.45,1.5);
	\foreach \t in {(0,0), (4,0), (8,0)}
		\draw[gray,thin,cm={1,0,0,1,\t}]
			(0,-0.15) -- (2.4,-0.15)
			(0,0.15) -- (2.4,0.15)
			(0,0.6) -- (2.4,0.6)
			(0,0.9) -- (2.4,0.9);
	\foreach \t in {(0,0),(4,0),(8,0)}
		\draw[cm={1,0,0,1,\t}] 
			(1.7,0.75) node {x};
	\foreach \t in {(0,0),(4.3,-0.3)}
		\draw[cm={1,0,0,1,\t}] 
			(2,0) node {o};
	\foreach \t in {(4,0), (8,0)}
		\draw[cm={1,0,0,1,\t}]
			(1.7,0) node {o};
	\draw[cm={1,0,0,1,(4,0)}]
		(2,-0.3) node {x};
	\draw[gray!50!black,->,cm={1,0,0,1,(0,0)}]
		(1.4,-0.8)-- node[at start,below] {{\footnotesize $c_1$}}(1.6,-0.6);
	\draw[gray!50!black,->, cm={1,0,0,1,(0,0)}]	
		(2.3,-0.8) -- node[at start,below] {{\footnotesize $c_2$}}(2.1,-0.6);

	\draw[gray!50!black,->, cm={1,0,0,1,(4,0)}]	
		(2.55,-0.8) -- node[at start,below]{{\footnotesize $c_2'$}}(2.35,-0.6);
	\draw[gray!50!black,->, cm={1,0,0,1,(4,0)}]
		(1.75,-0.8) -- node[at start,below]{{\footnotesize $c_1'$}}(1.95,-0.6);
	
	\draw[gray!50!black,->,cm={1,0,0,1,(4,0)}]
		(0.5,-0.3) -- node[at start,left] {{\footnotesize $R$}} (0.75,-0.3);

	\draw[gray!50!black,->,cm={1,0,0,1,(8,0)}]
		(1.45,-0.8) --node[at start,below] {{\footnotesize $c_1''$}} (1.65,-0.6);
	\draw[gray!50!black,->,cm={1,0,0,1,(8,0)}]
		(2.25,-0.8) --node[at start,below] {{\footnotesize $c_2''$}} (2.05,-0.6);
\foreach \tx in {0.3,4.3}
	\draw[gray,->,line width=9pt] (2.5+\tx,0.5) -- (3.3+\tx,0.5);
\foreach \tx in {0.3,4.3}
	\draw[line width=5pt,white] (2.4+\tx,0.5) -- (2.8+\tx,0.5);

\comment{
	\draw[gray,->,line width=8pt] (2.5,-1.5) -- (3.75,-2.25);
	\draw[line width=4.5pt,white] (2.4,-1.44) -- (3.36,-2.02);
}
	\draw
		(0.6,1.3) node {$D_+$}
		(4.6,1.3) node {$(D_+)_0'$}
		(8.6,1.3) node {$D_0$};

	\end{tikzpicture}\]
\caption{Resolution at a positive skein crossing}
\label{resolvCross}
\end{figure}

Given a grid diagram $D_+$ and a positive skein crossing of $D_+$, choose one of the $\bb O$ markings in the skein crossing. There exists some choice of grid projection that, near this marking, looks like the leftmost projection in Figure \ref{resolvCross} (or a $180^\circ$ rotation of this picture). Let $L_\times$ be the 1-singular link that has one double point at the crossing depicted in this projection, and is identical elsewhere to the link associated to $D_+$. We take the resolution $L_0$ of $L_\times$ defined in (\ref{eqn:resl'ns}).

This operation is local and we see that the link associated to the grid diagram $(D_+)_0'$, pictured in the middle of Figure \ref{resolvCross}, is isotopic to $L_0$. Since columns $c_1'$ and $c_2'$ are adjacent in the diagram $(D_+)_0'$ any commutation of row $R$ will be non-interleaving, and so an isotopy. Therefore, we may commute this row until we have a triple of markings that can be destabilized. After this destabilization we get the grid diagram $D_0$, as shown in Figure \ref{resolvCross}, with an associated link that is isotopic to $L_0$. Note that the grid diagram $D_0$ is obtained from $D_+$ by interchanging the $\theta_1$ coordinates of the $\bb O$ markings in the two columns.

Likewise, given a grid diagram $D_-$, a negative skein crossing of $D_-$, and an $\bb O$ marking in the skein crossing, there is a grid projection of $D_-$ that looks like the leftmost projection in Figure \ref{resolvNegCross}. We then define the resolution of this skein crossing similarly, getting the grid diagram $D_0$ depicted in Figure \ref{resolvNegCross}. In this case, we get the diagram $D_0$ by interchanging the $\bb X$ markings in the columns. Based on our observations, we make the following definition.

\begin{figure}[ht]
	\[\begin{tikzpicture}[>=stealth,scale=1.2]

	\draw[blue,thick,cm={1,0,0,1,(0,0)}]
		(0,0) -- (1.7,0) -- (1.7,0.65); 
	\draw[blue,thick,->,cm={1,0,0,1,(0,0)}]
		(1.7,0.85) -- (1.7,1.5);
	\draw[blue,thick,->,cm={1,0,0,1,(0,0)}]
		(2,-0.6) -- (2,0.75) -- (0,0.75);
	\draw[blue,thick,->,cm={1,0,0,1,(4,0)}]
		(0,0) -- (1.7,0) -- (1.7,0.75) -- (0,0.75);
	\draw[blue,thick,->,cm={1,0,0,1,(4,0)}]
		(2.3,-0.6) -- (2.3,1.05) -- (2,1.05) -- (2,1.5);
	\draw[blue,thick,->,cm={1,0,0,1,(8,0)}]
		(0,0) -- (1.7,0) -- (1.7,0.75) -- (0,0.75);
	\draw[blue,thick,->,cm={1,0,0,1,(8,0)}]
		(2,-0.6) -- (2,1.5);
	\foreach \t in {(0,0),(4,0),(8,0)}
	\foreach \v in {0,0.3,0.6}
		\draw[gray,thin,cm={1,0,0,1,\t}]
		(1.55+\v,-0.6) -- (1.55+\v,1.5);
	\draw[gray,thin,cm={1,0,0,1,(4,0)}]
		(2.45,-0.6) -- (2.45,1.5);
	\foreach \t in {(0,0), (4,0), (8,0)}
		\draw[gray,thin,cm={1,0,0,1,\t}]
			(0,-0.15) -- (2.4,-0.15)
			(0,0.15) -- (2.4,0.15)
			(0,0.6) -- (2.4,0.6)
			(0,0.9) -- (2.4,0.9);
	\foreach \t in {(0,0),(3.7,0),(7.7,0)}
		\draw[cm={1,0,0,1,\t}] 
			(2,0.75) node {x};
	\foreach \t in {(0,0),(4,0),(8,0)}
		\draw[cm={1,0,0,1,\t}] 
			(1.7,0) node {o};
	\foreach \t in {(4.3,1.05)}
		\draw[cm={1,0,0,1,\t}]
			(1.7,0) node {o};
	\draw[cm={1,0,0,1,(4,0)}]
		(2.3,1.05) node {x};
	\draw[gray!50!black,->,cm={1,0,0,1,(0,0)}]
		(1.4,-0.8)-- node[at start,below] {{\footnotesize $c_1$}}(1.6,-0.6);
	\draw[gray!50!black,->, cm={1,0,0,1,(0,0)}]	
		(2.3,-0.8) -- node[at start,below] {{\footnotesize $c_2$}}(2.1,-0.6);

	\draw[gray!50!black,->, cm={1,0,0,1,(4,0)}]	
		(2.55,-0.8) -- node[at start,below]{{\footnotesize $c_2'$}}(2.35,-0.6);
	\draw[gray!50!black,->, cm={1,0,0,1,(4,0)}]
		(1.75,-0.8) -- node[at start,below]{{\footnotesize $c_1'$}}(1.95,-0.6);
	
	\draw[gray!50!black,->,cm={1,0,0,1,(4,0)}]
		(0.5,1.05) -- node[at start,left] {{\footnotesize $R$}} (0.75,1.05);

	\draw[gray!50!black,->,cm={1,0,0,1,(8,0)}]
		(1.45,-0.8) --node[at start,below] {{\footnotesize $c_1''$}} (1.65,-0.6);
	\draw[gray!50!black,->,cm={1,0,0,1,(8,0)}]
		(2.25,-0.8) --node[at start,below] {{\footnotesize $c_2''$}} (2.05,-0.6);
\foreach \tx in {0.3,4.3}
	\draw[gray,->,line width=9pt] (2.5+\tx,0.5) -- (3.3+\tx,0.5);
\foreach \tx in {0.3,4.3}
	\draw[line width=5pt,white] (2.4+\tx,0.5) -- (2.8+\tx,0.5);

\comment{
	\draw[gray,->,line width=8pt] (2.5,-1.5) -- (3.75,-2.25);
	\draw[line width=4.5pt,white] (2.4,-1.44) -- (3.36,-2.02);
}
	\draw
		(0.6,1.5) node {$D_-$}
		(4.6,1.5) node {$(D_-)_0'$}
		(8.6,1.5) node {$D_0$};

	\end{tikzpicture}\]
\caption{Resolution at a negative skein crossing}
\label{resolvNegCross}
\end{figure}
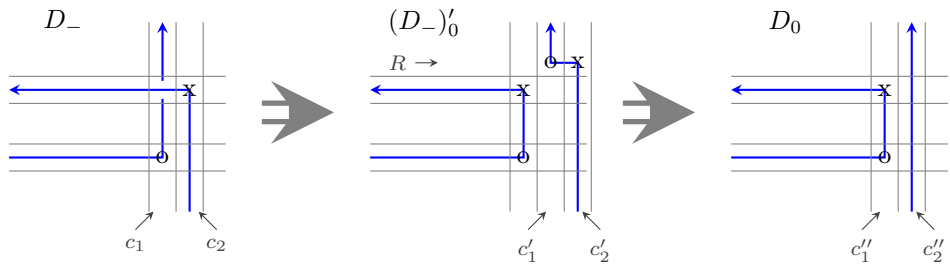

\begin{defn} Given a grid diagram $D$ and a positive (resp.\ negative) skein crossing of $D$, define $D_0$ to be the grid diagram that differs from $D$ only in the columns of the skein crossing, where it differs only by interchanging the $\theta_1$ coordinates of the $\bb O$ (resp.\ $\bb X$) markings.
\label{defn:gridResln}
\end{defn}

\begin{rem} In Figure \ref{resolvCross} (resp.\ \ref{resolvNegCross}), the grid diagram $D_0$ is obtained by interchanging the $\theta_1$-coordinates of the $\bb O$ (resp.\ $\bb X$) markings in the skein crossing. Note that if we change the $\theta_1$-coordinates of the $\bb X$ (resp.\ $\bb O$) markings instead, the resulting diagram differs from $D_0$ by a non-interleaving commutation of columns, and so its associated link is also isotopic to $L_0$. Finally, note that if $D_+$ and $D_-$ differ only as shown, by a skein crossing change, then the grid diagram $D_0$ is obtained as a resolution of each, so $D_+, D_-$, and $D_0$ form a skein triple. 
\end{rem}

In the course of the next section we will prove the existence of a HOMFLY polynomial of links in $L(p,q)$ (Theorem \ref{mainThm}), and in the proof we will need the function $scr$ to decrease under a resolution $D_{\pm}\leadsto D_0$. To see that $scr$ behaves this way, we use a map $H$ from the columns of a grid diagram to the coordinate plane. The map $H$ is defined below and we point out some of its properties.

Let $\scr C(D)$ be the set of columns of a grid diagram $D$ in $L(1,0)=S^3$ and choose an $\alpha$-curve $\alpha_0$. Define a map $H_{\alpha_0}:\scr C(D)\to\rls^2$ in the following way. Using the orientation on $\vec\beta$ we can define a height function $\theta_2$ on the annulus $T-\alpha_0$. For $c\in\scr C(D)$, let $u$ (resp.\ $v$) be the smaller (resp.\ larger) of the $\theta_2$ coordinates of the markings of $c$. We define $H_{\alpha_0}(c)=(u,v)$. For convenience of notation, we will drop the subscript of $H_{\alpha_0}$ unless we wish to emphasize the dependence of the map on the choice of $\alpha_0$.

By our definition, if $D$ has grid number $n$, then  $H(\scr C(D))$ is a set of $n$ points in the plane between the $y$-axis and the line $y=x$. Further the map only depends on the positions of the markings in a column, so if $rD$ is the grid diagram obtained from $D$ by reversing orientation (interchanging the $\bb X$ and $\bb O$ markings), then the image set $H(rD)$ equals the image $H(D)$. 

\begin{figure}[ht]
\begin{tikzpicture}[>=stealth,scale=0.9]
		\draw[<->,thick,cm={1,0,0,1,(-2,0)}]
			(0,-3.5)--(0,3.5);
		\draw[<->,thick,cm={1,0,0,1,(0,-2)}]
			(-3.5,0)-- (3.5,0);
		\draw[->,thick]
			(-2,-2) -- (3.5,3.5);
		\fill[gray!60!white]
			(-2,0) -- (-2,1.5) -- (0,1.5) -- (0,0);
		\fill[gray!60!white]
			(0,1.5) -- (1.5,1.5) -- (1.5,3.5) -- (0,3.5);
		\draw[thick,dashed]
			(-2, 1.5) -- (1.5,1.5)
			(0,3.5) -- (0,0)
			(1.5,1.5) -- (1.5,3.5)
			(-2,0) -- (0,0);
		\filldraw
			(0,1.5) circle (2pt);
		\draw
			(0,1.5) node [above left] {$p$};
\end{tikzpicture}
\caption{The interleaving regions of $p$, a point in the image of $H:\scr C(D)\to\rls^2$}
\label{fig:intImH}
\end{figure}

Finally, we point out that if $c$ is a column of $D$ and $p=H(c)$, it can be easily checked that another column $c'$ is interleaving with $c$ if and only if $H(c')$ is in one of the shaded regions of the plane shown in Figure \ref{fig:intImH}. Call these regions the \emph{interleaving regions} of $p$.

\begin{rem}
Observe that the property of two columns being interleaving in a grid diagram for a link in $S^3$ is completely independent of a preferred $\alpha$-curve (Definition \ref{defn:scr}). This implies that, although the map $H$ depends on the choice of $\alpha_0$, the property of $H(c')$ being in one of the interleaving regions of $H(c)$ does not depend on such a choice.
\label{rem:IndepAlpha_0}
\end{rem}


\section{The HOMFLY polynomial in $L(p,q)$}
\label{sec:HOMFLY}
\subsection{Trivial Links in $L(p,q)$.}
\label{subsec:TrivLinks}
Fix a lens space $L(p,q)$ with Heegaard torus $\Sigma$ as described in Section \ref{sec:prelim}. As mentioned in Remark \ref{rpRem}, after a choice of $\alpha_0$ and $\beta_0$ all grid diagrams are given coordinates $\setn{(\theta_1,\theta_2)}{\theta_1\in[0,p), \theta_2\in[0,1)}$.

Given a grid diagram $D$, let $GN(D)$ denote the grid number of $D$. Given a link $K$, set $GN(K)$ to be
	\[GN(K)=\min\setn{GN(D)}{D\text{ is a grid diagram for a link isotopic to }K}.\] 
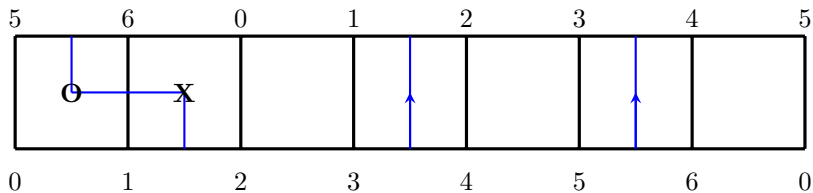
\begin{figure}[ht]
	\[\begin{tikzpicture}[scale=2,>=stealth]
	\draw[step=0.75,cm={1,0,0,1,(0,0)},very thick]
		(0,0) grid (5.25,0.75);

	\draw[blue,thick]
		(0.375,0.375)--(0.375,0.75)
		(1.125,0.375)--(1.125,0);
	\foreach \x in {0.75,2.25}
	\draw[blue,thick,->]
		(4.875-\x,0)--(4.875-\x,0.375);
	\foreach \x in {0.75,2.25}
	\draw[blue,thick]
		(4.875-\x,0)--(4.875-\x,0.75);
	\draw[blue,thick]
		(0.375,0.375) -- (1.125,0.375);
	\draw
		(0.375,0.375) node {{\bf O}}
		(1.125,0.375) node {{\bf X}};
	\foreach \l in {0,...,6}
	\draw
		(0.75*\l,-0.1) node [below] {\l};
	\foreach \l in {0,...,5}
	\draw
		(0.75*\l+1.5,0.75) node [above] {\l};
	\draw
		(0,0.75) node [above] {5}
		(0.75,0.75) node [above] {6}
		(5.25,-0.1) node [below] {0};
	\end{tikzpicture}\]
\caption{The grid number 1 knot $K_3$ in $L(7,2)$}
\label{fig:GN1knot}
\end{figure}

\begin{defn}
A knot $K$ in $L(p,q)$ is called a \emph{trivial knot} if $GN(K)=1$. If $\mu(K)=i$, we write $K=K_i$. Under this notation the unknot described in Remark \ref{rem:unknot} is a trivial knot and written $K_0$.
\label{defn:trivKnot}
\end{defn}

\begin{rem}
Note that $\mu$ is only defined up to congruence class mod $p$. Let $D_{K_i}$ denote the grid diagram with grid number one of the trivial knot $K_i$. Since $p,q$ are coprime, there exists some $K_i$ for each $0\le i\le p-1$. See Figure \ref{fig:GN1knot} for the diagram and a projection of $K_3$ in $L(7,2)$.
\end{rem}
Having defined trivial knots we now define a collection of trivial links so that one component trivial links are trivial knots. Let $\cl I=(m_0,m_1,\ldots,m_{p-1})$ be a $p$-tuple of non-negative integers. We construct a grid diagram $D(\cl I)$ with grid number $n=\sum m_i$ as follows. Place the markings $\bb O=\set{O_1,O_2,\ldots,O_n}$ so that $0<\theta_1(O_i)<1$ for each $1\le i\le n$, and so that
\al{
	\theta_1(O_i)+\frac1n	&= \theta_1(O_{i+1})\qquad\text{and}\\
	\theta_2(O_i)-\frac1n	&= \theta_2(O_{i+1}).
}
for each $1\le i\le n-1$. Note that these conditions force $O_1$ to be in the left-uppermost fundamental parallelogram; i.e. $O_1$ has coordinates $\left(\frac1{2n},1-\frac1{2n}\right)$.

In order to place the markings $\bb X=\set{X_1,X_2,\ldots,X_n}$, we define a bijection $\sigma:\set{0,1,\ldots,p-1}\to\set{0,1,\ldots,p-1}$ by the congruence $\sigma(i) q\equiv i\mod p$. Now, for each $i$ such that $\sum_{k=0}^{l-1}m_{\sigma(k)}+1\le i\le\sum_{k=0}^{l}m_{\sigma(k)}$, with $0\le l\le p-1$, let $v_i$ be the directed vertical arc (in the column of $O_i$) that is oriented away from $O_i$, satisfies $\alpha_0\cdot v_i=\sigma(l)$, and has one endpoint at $O_i$ and the other at the center of a fundamental parallelogram that is in the same row as $O_i$. Note that our convention on the orientation of $\alpha_0$ (see the beginning of Section \ref{sec:hty}) then forces $v_i$ to be oriented upward in figures. Place $X_i$ at the other endpoint of $v_i$. We call the resulting grid diagram $D(\cl I)$. An example is shown in Figure \ref{fig:TrivLink} (the vertical arc $v_2$ in the figure is not part of the grid diagram, but is depicted to help illustrate the definition).

Recall the definition of the components of a grid diagram (Definition \ref{defn:compDiag}). We first note that every component of $D(\cl I)$ has grid number one, since for each $1\le i\le n$, $O_i$ and $X_i$ are in the same row and the same column. Secondly, let $v_i$ be the vertical arc from $O_i$ to $X_i$ as in the construction of $D(\cl I)$. Denote the homology class of the component of $D(\cl I)$ corresponding to $O_i$ by $\mu(O_i)=\alpha_0\cdot v_i$. Then our construction (by the definition of $\sigma$) guarantees that if
	\begin{equation}\mu(O_i)q\text{ (mod }p)<\mu(O_j)q\text{ (mod }p)\label{eqn:orderO}\end{equation}
then $i<j$. Further, the coordinates of the $\bb O$ markings place these markings on a diagonal of a parallelogram in $T-\alpha_0-\beta_0$.

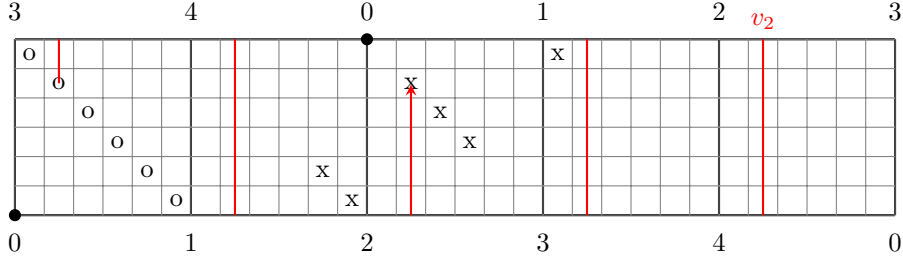
\begin{figure}
	\[\begin{tikzpicture}[scale=0.13,>=stealth]
	\draw[step=18,thick]
		(0,0) grid (90,18);
	\draw[gray,thin,step=3]
		(0,0) grid (90,18);
	\filldraw 
		(0,0) circle (16pt)
		(36,18) circle (16pt);
	\foreach \num in {0,1,2,3}	
	\draw
		(0+\num*18,-1) node[below] {\num}
		(36+\num*18,19) node[above] {\num};
	\draw
		(0,19) node[above] {3}
		(18,19) node[above] {4}
		(72,-1)node[below] {4}
		(90,-1)node[below] {0};
		
	\foreach \x in {(55.5,16.5), (40.5,13.5), (43.5,10.5), (46.5,7.5),(31.5,4.5),(34.5,1.5)}
		\draw 	\x node {x};
	\foreach \o in {(1.5,16.5),(4.5,13.5),(7.5,10.5),(10.5,7.5),(13.5,4.5),(16.5,1.5)}
		\draw	\o node {o};	
	\draw[red,thick,->]
		(4.5,13.5)--(4.5,18) (58.5,0)--(58.5,18) (22.5,0)--(22.5,18) (76.5,0)-- node[at end,above]{$v_2$}(76.5,18) (40.5,0)--(40.5,13.35);
		\end{tikzpicture}\]
	\caption{The trivial link diagram $D(\cl I)$ in $L(5,2)$ with $\cl I=(0,1,2,0,3)$.}
	\label{fig:TrivLink}
	\end{figure}

\begin{defn} For any $p$-tuple $\cl I$ of non-negative integers, define $K(\cl I)$ to be the link associated to $D(\cl I)$. We call a link $K$ in $L(p,q)$ a \emph{trivial link} if $K$ is isotopic to $K(\cl I)$ for some $\cl I$. The $p$-tuple $\cl I$ is called the \emph{index set} associated to $K$ and $D(\cl I)$ is called a \emph{trivial link diagram}. Note that each grid number one knot $K_i$ corresponds to an index set with 1 in the $i^{th}$ position and 0 otherwise.
\label{defn:trivLink}
\end{defn}

In the course of proving Theorem \ref{mainThm} we will need that each trivial link $K$ admits a grid diagram that minimizes the function $scr$ in that free homotopy class of link, among diagrams with minimal grid number (just as $n$ component unlinks minimize crossing number in their homotopy class). It may be that the trivial link diagram for $K$ does not achieve this, however in the following lemma we prove the existence of a grid diagram that does.

Let $\cl I=(m_0,m_1,\ldots,m_{p-1})$, with $m_i\ge0$ for each $i$. Consider all grid diagrams $D$ in $L(p,q)$ such that the components of $D$ have grid number one (they are in the set $\{D_{K_0},D_{K_1},\ldots,D_{K_{p-1}}\}$), and so that $D$ has $m_i$ components of type $D_{K_i}$ for each $0\le i\le p-1$. Call this finite collection of grid diagrams $\scr D(\cl I)$. Let $n=\sum m_i$. Then the grid number of every diagram in $\scr D(\cl I)$ is $n$.

\begin{lem}
Consider the link $K(\cl I)$ in $L(p,q)$ for an index set $\cl I$. There is a grid diagram $\hat D(\cl I)$ which has associated link isotopic to $K(\cl I)$, such that $scr(\hat D(\cl I))$ is the minimum of the image $scr(\scr D(\cl I))\subset\ints_{\ge0}$.
\label{lem:DefinetrivLink}
\end{lem}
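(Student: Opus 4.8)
The plan is to split the statement into an easy existence part and a harder identification part. For existence: $\scr D(\cl I)$ is a finite set — every diagram in it has grid number $n=\sum m_i$, and there are only finitely many placements of the $2n$ markings among the $pn^2$ fundamental parallelograms of a toroidal $n$-grid, up to the equivalence of Definition \ref{gridDefn} — so $scr(\scr D(\cl I))$ is a finite subset of $\ints_{\ge 0}$ and has a least element; let $\hat D(\cl I)\in\scr D(\cl I)$ realize it. The content of the lemma then reduces to checking that the link associated to $\hat D(\cl I)$ is isotopic to $K(\cl I)$.

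I would prove the stronger claim that \emph{every} $D\in\scr D(\cl I)$ has associated link isotopic to $K(\cl I)$. Since $D(\cl I)$ itself lies in $\scr D(\cl I)$ and carries the link $K(\cl I)$ by construction, it suffices to connect an arbitrary $D\in\scr D(\cl I)$ to $D(\cl I)$ through grid moves each of which preserves the isotopy type of the associated link (commutations and (de)stabilizations, from the list in Section \ref{sec:prelim}). The structural input is that, because each component of $D$ has grid number one, the $\bb O$ and $\bb X$ markings of a given component lie in a common row and a common column; hence the rows (resp.\ columns) of $D$ are partitioned among the components, one apiece, and one may slide the column of one component past that of another without merging components. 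Using non-interleaving column commutations I would reorder the columns so that the homology classes $\mu(O_i)=\alpha_0\cdot v_i$ of the successive components occur in the order prescribed by the congruence $\sigma(i)q\equiv i\bmod p$ used to build $D(\cl I)$; by Proposition \ref{prop:scrProperties}(2) these commutations leave $scr$ unchanged, and, being non-interleaving, they are isotopies. Row commutations then move the $\bb O$ markings onto the diagonal of $D(\cl I)$. At that point the two diagrams agree: each component is the grid-number-one knot $K_{\mu_i}$ of its homology class, so its vertical arc already carries the winding prescribed in the construction of $D(\cl I)$, and any residual discrepancy is a disk slide or a cyclic relabeling, neither of which affects the link. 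This identifies the link of $D$, and in particular of $\hat D(\cl I)$, with $K(\cl I)$.

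The step I expect to be the main obstacle is guaranteeing that the reordering commutations can be taken non-interleaving: an interleaving commutation is precisely a skein crossing change (Lemma \ref{trivlinksLem}), which would change the link. I would handle an interleaving pattern between two components exactly in the spirit of Figure \ref{interlvcaseFig} — perform an $\bb X$- or $\bb O$-stabilization on one component to split the offending pair of markings into adjacent rows (columns), carry out the now non-interleaving commutations, and destabilize — so that the net effect is an isotopy. A secondary technical point is the bookkeeping showing that the sort of the columns by homology class can always be completed; this can be done by a straightforward induction on the number of out-of-order adjacent pairs in the cyclic order of the classes. Should it turn out that some $D\in\scr D(\cl I)$ genuinely fails to carry $K(\cl I)$ — for instance if two components of equal homology class are clasped — the argument can instead be run only for the minimizer: such a clasp forces an interleaving pair of columns in the lift $\wt D$ that survives all column commutations, and removing it by a move within $\scr D(\cl I)$ strictly lowers $scr$, contradicting the minimality of $\hat D(\cl I)$.
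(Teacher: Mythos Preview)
Your main approach has a genuine gap. The stronger claim that \emph{every} $D\in\scr D(\cl I)$ carries a link isotopic to $K(\cl I)$ is false in general (two grid-number-one components of the same nontrivial homology class can be linked or unlinked depending on the row arrangement), and you implicitly concede this at the end. But the argument you give for the stronger claim also breaks before that point: you assert that the column reorderings can be taken non-interleaving, then propose to repair an interleaving commutation by stabilizing, commuting, and destabilizing ``in the spirit of Figure~\ref{interlvcaseFig}.'' That figure does the opposite of what you need --- it expresses an interleaving commutation as a crossing change followed by isotopies. If your stabilize/commute/destabilize trick produced an isotopy realizing an interleaving commutation, then every interleaving commutation would be an isotopy, contradicting Lemma~\ref{trivlinksLem}. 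So that repair cannot work.

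Your fallback sketch is pointed in the right direction but does not supply the needed argument: you say a clasp ``forces an interleaving pair \ldots\ that survives all column commutations'' (trivially true, since $scr$ is column-commutation invariant) and can be ``removed by a move within $\scr D(\cl I)$'' that lowers $scr$, but you never say which move or why it exists. The paper's proof avoids all of this by exploiting exactly the freedom you deny yourself: starting from an arbitrary minimizer, it performs column commutations \emph{without caring whether they are interleaving} --- they may well change the link, but by Proposition~\ref{prop:scrProperties}(2) they keep the diagram in $\scr M(\cl I)$. This produces a normalized diagram $\hat D(\cl I)$ satisfying three explicit conditions (a), (b), (c) on the $\bb O$ markings. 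The real work is then showing that from this normalized form one reaches $D(\cl I)$ by \emph{row} commutations that are provably non-interleaving, using (a), (b) and the homology ordering to control which rows must be crossed. That last verification is the step your proposal never reaches.
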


\begin{proof}
Definition \ref{defn:scr} gives a map $scr_{\cl I}:\scr D(\cl I)\to\ints_{\ge0}$ by restriction. If $z$ is the minimum of $\im scr_{\cl I}$, let $\scr M(\cl I)=scr_{\cl I}^{-1}(z)$. We must show that there is a diagram in $\scr M(\cl I)$ whose associated link is isotopic to $K(\cl I)$. To do so, we consider $\scr M(\cl I)$ more thoroughly. 

Recall from Proposition \ref{prop:scrProperties} that $scr$ is invariant under column commutation, so if $D\in\scr M(\cl I)$ and we get $D'$ from $D$ by a column commutation, then $D'\in\scr M(\cl I)$. Begin with any grid diagram $D\in\scr M(\cl I)$ and perform column commutations until the $\theta_1$ coordinate of every $\bb O$ marking is between 0 and 1 (this is possible since each component has grid number 1). The resulting grid diagram (which by abuse of notation we still call $D$, and which may have an associated link that is not isotopic to our original choice) is in $\scr M(\cl I)$.

Note that the $\bb O$ markings of $D$ are in one-to-one correspondence with the components and with the columns. Therefore, perhaps after more commutations of columns, we may guarantee that if $O,O'$ are two $\bb O$ markings and if 
	\begin{equation}\mu(O)q\text{ (mod }p)<\mu(O')q\text{ (mod }p)\label{eqn:orderOb}\end{equation}
then $\theta_1(O)<\theta_1(O')$ (here $\mu(O)$ is the homology class of the component of $D$ corresponding to $O$, as in (\ref{eqn:orderO})). As a result, all $\bb O$ markings with a fixed homology type are grouped into consecutive columns. Finally, among the $\bb O$ markings with homology $\mu(O)=i$, perform commutations of columns until the $\theta_2$ coordinates are decreasing (among the markings with $\mu(O)=i$). Do this for each $i$ with $m_i>0$. Denote the resulting grid diagram, which is contained in $\scr M(\cl I)$, by $\hat{D}(\cl I)$.

The process above results in a grid diagram $\hat D(\cl I)$ with an order on the $\bb O$ markings $\set{\hat O_1,\hat O_2,\ldots, \hat O_n}$ of $\hat D(\cl I)$ so that:
\begin{enumerate}
	\item[(a)] $\theta_1(\hat O_i)=\frac{2i-1}{2n}$;
	\item[(b)] if $\mu(\hat O_i)q\text{ (mod }p)<\mu(\hat O_j)q\text{ (mod }p)$ then $i<j$;
	\item[(c)] if $\mu(\hat O_i)=\mu(\hat O_{i+1})$ then $\theta_2(\hat O_i)>\theta_2(\hat O_{i+1})$.
\end{enumerate}

Note that by condition (a), $\theta_1(\hat O_i)=\theta_1(O_i)$ for each $i$. This, in addition to condition (b), implies that for every $1\le i\le n$ we have $\mu(\hat O_i)=\mu(O_i)$. 

Now, suppose the $p$-tuple $\cl I$ is zero in every coordinate except one. Then (c) makes $\hat D(\cl I)=D(\cl I)$, so $D(\cl I)$ is in $\scr M(\cl I)$ and we are finished. Suppose instead there is more than one homology class represented among the components. Property (c) implies that if $\theta_2(\hat O_i)<\theta_2(\hat O_{i+1})$ then $\mu(\hat O_i)\ne\mu(\hat O_{i+1})$. Thus, since $i+1$ is not less than $i$, it must be that 
\begin{equation}
\mu(\hat O_i)q\text{ (mod }p)<\mu(\hat O_{i+1})q\text{ (mod }p).
\label{inequalRowsNI}
\end{equation}
Let $k=\mu(O_{i+1})$ and for any $1\le j\le n$, let $r_j$ be the row of $\hat D(\cl I)$ containing $\hat O_j$ (and thus also $\hat X_j$). Property (a) and inequality (\ref{inequalRowsNI}) then imply that $r_i$ and $r_{i+j}$ are non-interleaving rows for any $1\le j\le m_k$. 

So, for each $0\le k\le p-1$, we may perform row commutations on the $m_k$ rows of $\hat D(\cl I)$ that have components in homology class $k$, one row at a time, until the corresponding $\bb O$ markings are on the diagonal (have coordinates $\left(\frac{2i-1}{2n},1-\frac{2i-1}{2n}\right)$). Each of these row commutations is non-interleaving, using (a), (b) and the fact that the two rows involved in each commutation correspond to different homology classes. These row commutations describe an isotopy starting at the link associated to $\hat D(\cl I)$ and ending at the link associated to $D(\cl I)$. Since $\hat D(\cl I)$ is in $\scr M(\cl I)$, this finishes the proof.
\end{proof}

\subsection{Every link is homotopic to a trivial link}

In this section we show that every link in $L(p,q)$ is homotopic to a unique trivial link. More precisely, we show that this homotopy can be realized by a sequence of particular grid moves (see Proposition \ref{trivlinksProp}).

Note that a trivial link is determined by an index set $\cl I$. Any two different trivial links cannot be in the same homotopy class, since their index sets will differ and so at least one of their components differs in free homotopy class. Thus if a link is homotopic to some trivial link, it is homotopic to a unique trivial link. We will show that there is a particular type of homotopy from any link to a trivial link.

\begin{lem} Let $K$ be a Legendrian link in $L(p,q)$ associated to a grid diagram $D_K$. Suppose $D_K$ has a component with grid number more than $1$. Then there exists a sequence of commutations (both interleaving and non-interleaving) followed by a destabilization giving a new grid diagram $D'$ such that $GN(D')<GN(D_K)$.
\label{lemRuth}
\end{lem}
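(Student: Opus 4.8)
The plan is to push the whole problem inside the offending component and then manufacture a destabilization there by commutations, allowing ourselves crossing changes. Let $D_0\subseteq D_K$ be a component with grid number $m\ge 2$, occupying $m$ rows and $m$ columns of $D_K$ whose $2m$ markings are exactly the markings in those columns. Every commutation of two rows (or two columns) of $D_K$ is a legal grid move, and a commutation that involves only rows (or only columns) of $D_0$ is in particular a commutation of the sub-diagram $D_0$; likewise a destabilization performed inside $D_0$ is a destabilization of $D_K$ and drops $GN(D_K)$ by exactly one. Observe also that the conclusion asks only for \emph{some} grid diagram of smaller grid number, not one whose link is isotopic to $K$: by Lemma~\ref{trivlinksLem} interleaving commutations realize crossing changes, so we are free to move $K$ within its free homotopy class. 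Hence it suffices to show: for the knot diagram $D_0$ with $GN(D_0)=m\ge 2$, there is a sequence of commutations (interleaving allowed) after which a destabilization applies.

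The mechanism I would use is to isolate a ``kink.'' Fix a column $c$ of $D_0$, and let its $\bb O$-marking lie in row $r$ and its $\bb X$-marking in row $r'$. Using row commutations of $D_K$, bring $r$ and $r'$ to be adjacent; using column commutations, bring the column $c''$ containing the $\bb X$-marking that sits in row $r$ (this marking lies in $D_0$, since $r$ is a row of the component, and $c''\ne c$) to be adjacent to $c$. After these moves the markings in the two rows $\{r,r'\}$ and two columns $\{c,c''\}$ form a small local block: the $\bb O$-marking of $c$ at the corner, the $\bb X$-marking of $c$ directly above or below it, and the $\bb X$-marking of $c''$ directly beside it — with the fourth cell of the block empty (otherwise those four markings would be precisely the markings of a proper grid-number-two subcomponent of $D_0$, contradicting that $D_0$ presents a knot; in the edge case $m=2$ the block is all of $D_0$, and a single interleaving commutation of its two columns, i.e.\ a crossing change, then puts it in destabilizable position). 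This block is exactly the configuration produced by an $\bb X$-stabilization (compare Figure~\ref{fig:stabl}), so the reverse move applies, and destabilizing yields $D'$ with $GN(D')=GN(D_K)-1$.

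The step I expect to be the main obstacle is scheduling the commutations so that none is of the excluded type in Figure~\ref{fig:illegalcommut}, where a single row contains one marking from each of the two columns being exchanged (and symmetrically for rows). Here some care is required. For the column commutations dragging $c''$ toward $c$: row $r$ already contains both of its markings (the $\bb O$-marking of $c$ and the $\bb X$-marking in column $c''$), so no intervening column meets $c''$ in row $r$, and an exchange of $c''$ with an intervening column $c^\ast$ is illegal only if $c^\ast$ meets $c''$ in the row of the $\bb O$-marking of $c''$. Such obstructing columns can be pushed aside first, either past $c$ or past $c''$, and the remaining knotted cases are broken by first reshuffling the two rows $r,r'$ into a generic position; a short induction on the number of rows (resp.\ columns) separating the relevant markings makes this precise, and the same argument handles the row commutations that make $r,r'$ adjacent. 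Once this legality bookkeeping is in place — together with the verification that the resulting block is one of the eight genuine destabilization patterns — the lemma follows, since everything outside $D_0$ is untouched and the single destabilization lowers the grid number.
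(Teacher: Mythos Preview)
Your argument treats a lens--space grid diagram as if rows and columns meet in a single fundamental parallelogram, and that is where it breaks down. In $L(p,q)$ each row meets each column in $p$ fundamental parallelograms, so having $r,r'$ adjacent as rows and $c,c''$ adjacent as columns does \emph{not} force the three markings $\bb O\in c\cap r$, $\bb X\in c\cap r'$, $\bb X\in c''\cap r$ into pairwise adjacent parallelograms. Concretely, within the column $c$ (an annulus cut into $pn$ parallelograms), a row commutation moves the $\bb O$ by exactly one slot in $c$'s cyclic order; bringing the \emph{rows} $r$ and $r'$ adjacent costs at most $\lfloor n/2\rfloor$ such moves, which need not close a gap of size up to $pn/2$ between the two markings of $c$. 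So the ``small local block'' you describe need not be a stabilization pattern at all. The paper flags exactly this subtlety: ``if $len(X_iO_j)=1$ then $X_i$ and $O_j$ are in adjacent columns (resp.\ rows). The converse, however, is not true.'' The paper's proof therefore works with the length function $len$ on arcs (which counts fundamental parallelograms) and arranges $len=1$ in both directions before destabilizing; your adjacency-of-rows/columns substitute is too coarse in the lens--space setting.

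A secondary issue is your treatment of the excluded move of Figure~\ref{fig:illegalcommut}. Saying that obstructing columns ``can be pushed aside first'' and that ``a short induction \ldots\ makes this precise'' is not a proof; in particular, pushing an obstructing column past $c''$ may itself be blocked, and you have not given the inductive quantity that decreases. The paper avoids this by first choosing the column with \emph{minimal} $len(X_1O_1)$ and then only dragging the neighbouring column $c_2$; the minimality is what guarantees the subsequent row commutations (all non-interleaving) terminate without hitting an illegal exchange. Even if you repair the legality bookkeeping, the first gap remains: you still need a mechanism that makes the relevant arc lengths equal to $1$, not merely the rows and columns adjacent.
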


Note that in Lemma \ref{lemRuth} we arrive at the grid diagram $D'$ without using the column exchange described in Remark \ref{rem:illegalcommut}. We avoid the use of such moves for contact geometric considerations. Before proving Lemma \ref{lemRuth} we use it to find a homotopy from any link to a trivial link through commutations and destabilizations.
 
\begin{prop}
Let $D_K$ be a grid diagram for the link $K\subset L(p,q)$. There is a sequence of commutations and destabilizations taking $D_K$ to a trivial link diagram. This provides a homotopy from $K$ to a trivial link.
\label{trivlinksProp}
\end{prop}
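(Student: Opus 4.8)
The plan is to use Lemma \ref{lemRuth} repeatedly to reduce the grid number, and then to handle the grid number of each component being $1$ by direct manipulation. First I would observe that Lemma \ref{lemRuth} applies whenever $D_K$ has a component of grid number greater than $1$: in that case it produces, via commutations (interleaving or not) and a single destabilization, a grid diagram $D'$ with strictly smaller grid number whose associated link is still isotopic — hence homotopic — to $K$. Since the grid number is a non-negative integer, iterating this step terminates after finitely many applications at a grid diagram $D''$, homotopic to $K$ by a sequence of commutations and destabilizations, in which \emph{every} component has grid number $1$. By Definition \ref{defn:compDiag}, this means $D''$ decomposes into $n=GN(D'')$ rows and columns, grouped so that each of the $n$ one-component subdiagrams is one of the $D_{K_i}$; in particular $D'' \in \scr D(\cl I)$ for the index set $\cl I=(m_0,\dots,m_{p-1})$ recording how many components lie in each homology class.

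Next I would bring $D''$ into the standard trivial link diagram form. Here the key point is that the manipulations carried out in the proof of Lemma \ref{lem:DefinetrivLink} are all commutations: by column commutations one may arrange that the $\theta_1$-coordinates of the $\bb O$ markings are the points $\frac{2i-1}{2n}$, ordered so that condition (b) of that proof holds and so that within a fixed homology class the $\theta_2$-coordinates are decreasing; and then by the non-interleaving row commutations described in the last paragraph of that proof one slides the $\bb O$ markings (with their rows) onto the diagonal, arriving precisely at the trivial link diagram $D(\cl I)$. Each such move is a commutation and corresponds to an isotopy of the associated link. Concatenating this with the earlier sequence gives a sequence of commutations and destabilizations from $D_K$ to $D(\cl I)$.

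Finally, since commutations and destabilizations each correspond to an isotopy or, in the case of interleaving commutations, to a crossing change (Lemma \ref{trivlinksLem}) — and both isotopy and crossing change are instances of homotopy of links — the whole sequence realizes a homotopy from $K$ to the trivial link $K(\cl I)$, proving the proposition. The main obstacle, and the place where the real work lies, is Lemma \ref{lemRuth} itself: controlling a component of grid number $> 1$ and producing the needed commutation–destabilization sequence without ever using the illegal column exchange of Remark \ref{rem:illegalcommut}; once that lemma is in hand, the proposition follows by the bookkeeping above.
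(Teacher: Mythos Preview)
Your approach is essentially the same as the paper's: iterate Lemma \ref{lemRuth} until every component has grid number one, then use column commutations to achieve conditions (a), (b), (c) from the proof of Lemma \ref{lem:DefinetrivLink}, and finally invoke that proof's non-interleaving row commutations to reach $D(\cl I)$. The paper phrases the last step slightly differently (it stops at $\hat D$ and quotes Lemma \ref{lem:DefinetrivLink} to say $\hat D$'s link is isotopic to the trivial link), but the content is identical.

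One slip to correct: after applying Lemma \ref{lemRuth} you write that the resulting link ``is still isotopic --- hence homotopic --- to $K$.'' Since Lemma \ref{lemRuth} explicitly allows interleaving commutations, which by Lemma \ref{trivlinksLem} are crossing changes, the link is only \emph{homotopic} to $K$, not isotopic. Similarly, the column commutations you use to reach conditions (a), (b), (c) may be interleaving, so the sentence ``Each such move is a commutation and corresponds to an isotopy'' is not right if it is meant to cover those column moves. Neither slip damages the argument, since the proposition only asserts a homotopy and your final paragraph correctly accounts for the interleaving case; just adjust the wording.
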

\begin{proof}
We prove Lemma \ref{lemRuth} below. By that lemma there is a grid diagram $D_{K'}$ corresponding to a link $K'$ such that the grid number of each component of $D_{K'}$ equals 1, and a sequence of commutations and destabilizations takes $D_K$ to $D_{K'}$. Since each component of $D_{K'}$ has grid number 1, we may use skein crossings to successively interchange adjacent columns of $D_{K'}$ until we arrive at a diagram $\hat D$ with its $\bb O$ markings satisfying conditions (a), (b), and (c) from the proof of Lemma \ref{lem:DefinetrivLink}. By that proof, the link associated to $\hat D$ is a trivial link. This process is a sequence of column commutations, either interleaving or non-interleaving. Since non-interleaving commutations and destabilizations correspond to an isotopy, Lemma \ref{trivlinksLem} implies that the resulting sequence of diagrams, from $D_K$ to $\hat D$, corresponds to a homotopy from $K$ to a trivial link.
\end{proof}

\begin{proof} [Proof of Lemma \ref{lemRuth}]
We introduce the \emph{length} of arcs in a grid projection. Every grid projection $P$ is a union of $GN(P)$ horizontal arcs and $GN(P)$ vertical arcs. Define the \emph{length} of an arc by the number of fundamental parallelograms it traverses. For example, the vertical arc in Figure \ref{fig:GN1knot} has length 3 and the horizontal arc has length 1. 

Note that if two markings $X_i$ and $O_j$ share a row (resp.\ a column), there are two possible arcs that could join them in a grid projection. Define $len(X_iO_j)$ to be the minimum of the lengths of these two arcs. Note that if $len(X_iO_j)=1$ then $X_i$ and $O_j$ are in adjacent columns (resp.\ rows). The converse, however, is not true since one column of the grid diagram (a component of $T-\vec\beta$) has $p$ fundamental parallelograms in the same row. 

For clarity, in the following figures we depict our argument by schematics rather than depicting the link projections on a grid diagram. The schematics show only the markings and horizontal and vertical arcs of the projection, but we remind the reader that{--}just as in a grid projection{--}a vertical arc in the schematic could be intersecting $\alpha_0$ multiple times
. The arrows in the schematics indicate how commutations will affect the columns, and a circled triple of markings indicates a destabilization.

Continuing with the proof, let $\hat D$ be a component of $D_K$ with $GN(\hat D)>1$. Then there are three markings of $\hat D$, say $X_1, O_1, X_2$, where $X_1, X_2$ are distinct, $X_1, O_1$ are in the same column, and $X_2,O_1$ are in the same row. Furthermore, among all such triples of markings of $D_K$, choose a triple $X_1,O_1,X_2$ such that $len(X_1O_1)$ is minimal.

Denote the column containing $X_1$ and $O_1$ by $c_1$ and the row containing $O_1$ and $X_2$ by $r_1$. Define $O_2$ to be the $\bb O$ marking in the column of $X_2$ and call this column $c_2$. Let $X_3$ be the $\bb X$ marking in the same row as $O_2$, which row we call $r_2$ (see Figure \ref{figRuth2}). 

\begin{figure}[ht]
	\[\begin{tikzpicture}[scale=0.8,>=stealth]

	\draw
		(3.3,2.8) node {\begin{Large}Grid Projection\end{Large}};
		
	\foreach \c in {0,...,3}
	\draw[step=1,gray,thin,cm={1,0,0,1,(1.8*\c,0)}]
		(-0.2,0) grid (1.2,2.25);
	\draw[blue,thick]
		(4.1,1.5) -- (2.3,1.5) -- (2.3,0)
		(5.9,2.25) -- (5.9,0.5);
	\draw[blue,dashed]
		(4.1,1.5) -- (4.1,0.8)
		(5.9,0.5) -- (5.6,0.5);
\foreach \ty in {0,0.4,0.8,1.2,1.6}
	\foreach \b in {1.2,3,4.8}
	\fill[gray,cm={1,0,0,1,(0,\ty)}]
		(\b,0) -- (\b,0.2) -- (\b+0.4,0.5) -- (\b+0.4,0.3) -- cycle;
\foreach \ty in {0.2,0.6,1,1.4}
	\foreach \b in {1.2,3,4.8}
	\fill[white,cm={1,0,0,1,(0,\ty)}]
		(\b,0) -- (\b,0.2) -- (\b+0.4,0.5) -- (\b+0.4,0.3) -- cycle;
	\foreach \b in {1.2,3,4.8}
	\fill[gray,cm={1,0,0,1,(0,0)}]
		(\b,2.25) -- (\b+0.33,2.25) -- (\b,2) -- cycle;
\foreach \tty in {0,-4,-8,-12}
\foreach \tx in {0,8}
	\foreach \b in {1.2,3,4.8}
	\fill[gray,cm={1,0,0,1,(0,0)}]
		(\b+0.4,0) -- (\b+0.4,0.1) -- (\b+0.277,0) -- cycle;

	\draw[->]
		(-0.2,0) -- node [below] {$\alpha_0$} (6.6,0);

	\draw
		(2.4,1.5) node {{\bf O$_1$}}
		(4.2,1.5) node {{\bf X$_2$}}
		(6,0.5) node {{\bf X$_1$}};

	\draw
		(11.3,2.8) node {\begin{Large}Schematic\end{Large}};
		
	\foreach \g in {0,1.6}
	\draw[blue,thick]
		(11.2,0.5) -- (11.2,2.1) -- (12.9,2.1) -- (12.9,-0.3) -- (9.6,-0.3);
	\draw[thick,blue]
		(11.2,0.5) -- (10.9,0.5);
	\draw
		(11.3,0.5) node {{\bf X$_1$}}
		(11.3,2.1) node {{\bf O$_1$}}
		(13,2.1) node {{\bf X$_2$}};	
	\draw
		(13,-0.3) node {{\bf O$_2$}}
		(9.7,-0.3) node {{\bf X$_3$}};
	\draw[red,very thick,<-]
		(11,0.7) -- (11,1.85);
	\draw[red,very thick,->]
		(12.65,2.3) -- (11.45,2.3);
	\end{tikzpicture}\]
\caption{Decreasing Grid Number}
\label{figRuth2}
\end{figure}
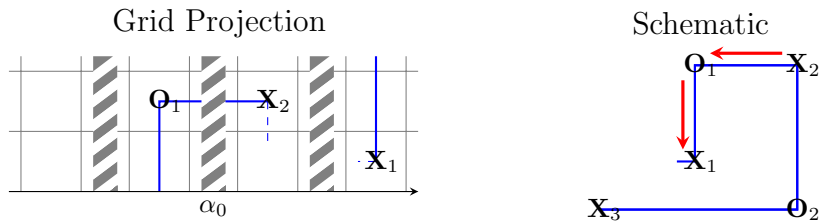

Since $X_1$ and $X_2$ are distinct, we know the columns containing the pairs $(X_1,O_1)$ and $(X_2,O_2)$ are distinct. Thus there is a sequence of commutations of columns of $D_K$ after which we have one of two cases. In case 1, $len(O_1X_2)=1$. In case 2, which is depicted in Figures \ref{figRuth3} and \ref{figRuth4}, $len(O_2X_3)=1$. Each commutation (either interleaving or non-interleaving) exchanges column $c_2$ with an adjacent column, the result being that $len(O_1X_2)$ decreases. As previously shown, a commutation corresponds to either a skein crossing change or a Legendrian isotopy. 

We now consider the two cases.

{\bf Case 1:} By the minimality of our choice of $len(X_1O_1)$ we can commute row $r_1$ with an adjacent row, decreasing $len(X_1O_1)$ until $len(X_1O_1)=1$. Since $len(O_1X_2)=1$, $c_1$ and $c_2$ are adjacent columns, so each of the row commutations is non-interleaving. Performing a destabilization on $(X_1,O_1,X_2)$, the grid number drops by one.

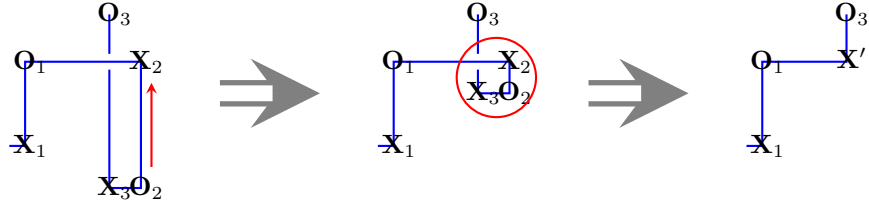
\begin{figure}[ht]
	\[\begin{tikzpicture}[scale=0.7,>=stealth]
	
	\foreach \tx in {0,7}
	\draw[blue,thick] (9.6-\tx,0.5) -- (9.6-\tx,2.1) -- (11.8-\tx,2.1);
	\draw[blue,thick] (16.6,0.5) -- (16.6,2.1) -- (18.2,2.1) -- (18.2,3);
	\foreach \tx in {-7,0,7}
	\draw[thick,blue] (9.6-\tx,0.5) -- (9.3-\tx,0.5);
	\foreach \tx in {0,7}
	\draw[thick,blue] (11.8-\tx,2.1) -- (11.8-\tx,1.5-1.8*\tx/7) -- (11.2-\tx,1.5-1.8*\tx/7);
	\draw[thick,blue] (11.2,1.5) -- (11.2,1.95) (11.2,2.25) -- (11.2,3);
	\draw[thick,blue] (4.2,-0.3) -- (4.2,1.95) (4.2,2.25)--(4.2,3);
	\foreach \tx in {-7,0,7}
	\draw 
		(9.7-\tx,0.5) node {{\bf X$_1$}}
		(9.7-\tx,2.1) node {{\bf O$_1$}};
	\foreach \tx in {0,7}
	\draw
		(11.9-\tx,2.1) node {{\bf X$_2$}};
	\draw
		(18.3,2.17) node{{\bf X$'$}};
	\foreach \tx in {0,7}
	\draw
		(11.9-\tx,1.5-1.8*\tx/7) node {{\bf O$_2$}}
		(11.3-\tx,1.5-1.8*\tx/7) node {{\bf X$_3$}};
	\foreach \tx in {-7,0,7}
	\draw
		(11.3-\tx,3) node {{\bf O$_3$}};

	\draw[red,thick,->] (5,0.1) -- (5,1.7);
	\draw [red,thick] (11.55,1.8) node[circle,draw,minimum size=30]{};
\foreach \tx in {0,7}
	\draw[gray,->,line width=10pt] (6.3+\tx,1.5) -- (8.2+\tx,1.5);
\foreach \tx in {0,7}
	\draw[line width=6pt,white] (6.2+\tx,1.5) -- (7.3+\tx,1.5);
		
	\end{tikzpicture}\]
\caption{Decreasing Grid Number:Case 2a}
\label{figRuth3}
\end{figure}

{\bf Case 2:} We are in one of the two situations of Figures \ref{figRuth3} and \ref{figRuth4}. Let $O_3$ be the marking that shares a column with $X_3$. 

In case 2a, $len(X_3O_3)\ge len(X_2O_2)$, so we can commute row $r_2$ until $len(X_2O_2)=1$, with all commutations being non-interleaving (since $len(X_3O_2)=1$). This is possible since $X_3$ is not the same marking as $X_2$, for this would force $GN(\hat D)=1$. Note that $X_3$ may not be distinct from $X_1$. Performing a destabilization on the triple $(X_2, O_2,X_3)$, the grid number decreases by one. 

Otherwise, we are in case 2b where $len(X_3O_3)<len(X_2O_2)$. Commute row $r_2$, using only non-interleaving commutations, until $len(X_3O_3)=1$, which again is possible since $GN(\hat D)\ne1$ and $len(X_3O_2)=1$. Perform a destabilization on the triple $(O_2,X_3,O_3)$. Again the destabilization drops the grid number of the diagram by one.

\begin{figure}[ht]
	\[\begin{tikzpicture}[scale=0.7,>=stealth]
	
	\foreach \tx in {0,7}
	\draw[blue,thick] (9.6-\tx,0.5) -- (9.6-\tx,2.1) -- (11.8-\tx,2.1);
	\draw[blue,thick] (16.6,0.5) -- (16.6,2.1) -- (18.2,2.1) -- (18.2,1.45);
	\foreach \tx in {-7,0,7}
	\draw[thick,blue] (9.6-\tx,0.5) -- (9.3-\tx,0.5);
	\foreach \tx in {0,7}
	\draw[thick,blue] (11.8-\tx,2.1) -- (11.8-\tx,0.9-1.8*\tx/7) -- (11.2-\tx,0.9-1.8*\tx/7);
	\draw[thick,blue] (11.2,0.9) -- (11.2,1.5);
	\draw[thick,blue] (4.2,-0.9) -- (4.2,1.5);
	\foreach \tx in {-7,0,7}
	\draw 
		(9.7-\tx,0.5) node {{\bf X$_1$}}
		(9.7-\tx,2.1) node {{\bf O$_1$}};
	\foreach \tx in {0,7}
	\draw
		(11.9-\tx,2.1) node {{\bf X$_2$}};
	\draw
		(18.3,2.1) node{{\bf X$_2$}};
	\foreach \tx in {0,7}
	\draw
		(11.9-\tx,0.9-1.8*\tx/7) node {{\bf O$_2$}}
		(11.3-\tx,0.9-1.8*\tx/7) node {{\bf X$_3$}};
	\foreach \tx in {0,7}
	\draw
		(11.3-\tx,1.5) node {{\bf O$_3$}};
	\draw (18.3,1.5) node {{\bf O$'$}};

	\draw[red,thick,->] (4,-0.4) -- (4,1.2);
	\draw [red,thick] (11.55,1.2) node[circle,draw,minimum size=30]{};
\foreach \tx in {0,7}
	\draw[gray,->,line width=10pt] (6.3+\tx,1.5) -- (8.2+\tx,1.5);
\foreach \tx in {0,7}
	\draw[line width=6pt,white] (6.2+\tx,1.5) -- (7.3+\tx,1.5);
		
	\end{tikzpicture}\]
\caption{Decreasing Grid Number:Case 2b}
\label{figRuth4}
\end{figure}
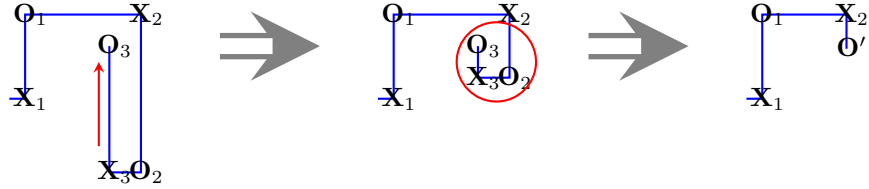

\end{proof}

\begin{rem}
Examining the proof of Lemma \ref{lemRuth}, we see that no \underline{row} commutation performed in the reduction is interleaving.
\label{noRowCross}
\end{rem}

\subsection{The polynomial invariant}

As a consequence of Proposition \ref{trivlinksProp}, there is a unique trivial link in each homotopy class of links. In this section we define a complexity function on grid diagrams so that trivial link diagrams minimize the complexity function. In classical knot theory one often considers the unlinking number of a link diagram as part of a complexity function. We consider an analogous number for grid diagrams.

\begin{defn}
Let $D$ be a grid diagram for $K$. Define $u(D)$ to be the minimum number of skein crossing changes in a sequence of commutations and destabilizations that take $D$ to a trivial link diagram $D(\cl I)$, where the only interleaving commutations are column commutations. Proposition \ref{trivlinksProp} , Remark \ref{noRowCross}, and the proof of Lemma \ref{lem:DefinetrivLink} imply that there is such a number and that if $u(D)=0$ then the link associated to $D$ is isotopic to a trivial link.
\label{defn:unlkNum}
\end{defn}

{\bf Theorem \ref{mainThm}}.\quad
{\it Let $\scr L$ be the set of isotopy classes of links in $L(p,q)$ and let $\scr{TL}\subset\scr L$ denote the set of isotopy classes of trivial links. Define $\scr{TL}^*\subset\scr{TL}$ to be those trivial links with no nullhomotopic components. Let $U$ be the isotopy class of the standard unknot, a local knot in $L(p,q)$ that bounds an embedded disk. Suppose we are given a value $J_{p,q}(T)\in\ints[a^{\pm1},z^{\pm1}]$ for every $T\in\scr{TL}^*$. Then there is a unique map $J_{p,q}:\scr{L}\to\ints[a^{\pm1},z^{\pm1}]$ such that
	\en{
	\item[(i)] $J_{p,q}$ satisfies the skein relation
		\begin{equation*}
		a^{-p}J_{p,q}(L_+)-a^pJ_{p,q}(L_-)=zJ_{p,q}(L_0).
		\end{equation*}
	\item[(ii)] $J_{p,q}(U)=a^{-p+1}$.
	\item[(iii)] $J_{p,q}\left(U\coprod L\right)=\frac{a^{-p}-a^p}{z}J_{p,q}(L)$.
	}
}
\medskip

\begin{rem}
The map $J_{p,q}$ is defined to be the map $J_M$, with $M=L(p,q)$, given by Theorem \ref{thm:psInvt} except that we define $a$ such that $a^p=v$. While not essential to the proof of the theorem, the change of variable is useful for a future application: in \cite{C2} the author uses a particular normalization of $J_{p,q}$ in order to achieve an analogue of the Franks-Williams-Morton inequality. To prove the theorem at hand, by Theorem \ref{thm:psInvt} we only need show that $J_{p,q}$ is Laurent-polynomial valued. Note that we have not specified what the fixed normalization on the trivial links is. Indeed, such a choice is not canonical. 
\label{rem:normalization}
\end{rem}

\begin{proof}
Define a function $\psi$ which assigns to every grid diagram $D$ in $L(p,q)$ the triple of numbers \[\psi(D)=(GN(D),scr(D),u(D)).\] Ordering these triples lexicographically, we will use $\psi$ as a complexity function.

There is a grid diagram $\hat D$ (the diagram $\hat D(\cl I)$ of Lemma \ref{lem:DefinetrivLink}) associated to any trivial link $T$ so that if $D_K$ is any grid diagram with associated link $K$, and $K$ is in the same homotopy class as $T$, then $\psi(\hat D)\le\psi(D_K)$, equality holding only if $T$ is isotopic to $K$. For in such a case, $GN(\hat D)\le GN(D_K)$ since $\hat D\in\scr D(\cl I)$, with $\cl I$ the index set. If $GN(\hat D)=GN(D_K)$ then $D_K\in\scr D(\cl I)$, but by definition $\hat D\in\scr M(\cl I)$, implying $scr(\hat D)\le scr(D_K)$. As $u(\hat D)=0\le u(D_K)$ by definition, with equality only if $K$ is isotopic to $T$, we see that $\psi(\hat D)<\psi(D_K)$ if $K$ is not a trivial link.

Our argument inducts on the complexity function $\psi$. Let $K$ be a non-trivial link in $L(p,q)$ and $D$ a grid diagram associated to $K$. There is a sequence of $u(D)$ skein crossing changes that, with non-interleaving commutations and destabilizations, takes $D$ to the trivial link diagram in its homotopy class. Suppose the first skein crossing in this sequence is positive and let $D=D_+$, $D_-$, and $D_0$ be diagrams as described in Section \ref{sec:skTheory} (differing only at the two columns of the skein crossing) with associated links $K=K_+, K_-$, and $K_0$. By the skein relation,
	\begin{equation}
	a^{-p}J_{p,q}(K)-a^pJ_{p,q}(K_-)=zJ_{p,q}(K_0).
	\label{eqn:skRel1}
	\end{equation}
But $GN(D_-)=GN(D)$, and $scr(D_-)=scr(D)$ since the skein crossing change is a commutation of columns. As our choice of crossing change makes $u(D_-)<u(D)$, we have that $\psi(D_-)<\psi(D)$.

Recall the defintion of the grid diagram $D_0$ in Definition \ref{defn:gridResln}. $D_0$ has the same grid number as $D$. We prove below that 

\begin{lem}
$scr(D_0)<scr(D)$.
\label{lem:scrDrop}
\end{lem}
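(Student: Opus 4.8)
I want to show that resolving a positive (resp. negative) skein crossing strictly decreases $scr$. Recall from Definition~\ref{defn:scr} that $scr(D)$ counts interleaving pairs of columns in the lift $\wt D$, and that by Remark~\ref{rem:IndepAlpha_0} this count can be read off from the map $H=H_{\alpha_0}:\scr C(\wt D)\to\rls^2$: two columns $c',c''$ of $\wt D$ are interleaving iff $H(c'')$ lies in an interleaving region of $H(c')$ (Figure~\ref{fig:intImH}). So the whole question becomes a count of points in the plane. First I would pass from $D$ to $\wt D$: a single skein crossing of $D$ (an adjacent interleaving pair of columns with $\theta_1$-coordinates in $[0,1)$) lifts to $p$ disjoint adjacent interleaving pairs in $\wt D$, one in each ``copy'' $A_i$ of the annulus; and the resolution $D_0$ of Definition~\ref{defn:gridResln}, which interchanges the $\theta_1$-coordinates of the two $\bb O$ markings (positive case) or the two $\bb X$ markings (negative case), lifts to $\wt{D_0}$, which differs from $\wt D$ by the analogous interchange performed in each of the $p$ copies.

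**The local computation.** Fix one of the $p$ copies and let $c_1,c_2$ be the two columns of the skein crossing there, with $\bb O$-markings at heights $o_1,o_2$ and $\bb X$-markings at heights $x_1,x_2$ (in the $\theta_2$-coordinate, using some $\alpha_0$ not separating these markings). Being a positive skein crossing means — after reading off Figure~\ref{fig:signSkCross} — that the four heights interleave in a specific order, say $o_1 < x_1 < o_2 < x_2$ up to the cyclic/relabeling ambiguity; so $H(c_1) = (o_1,x_1)$ and $H(c_2)=(o_2,x_2)$ with $o_1 < x_1 < o_2 < x_2$, which indeed puts $H(c_2)$ in an interleaving region of $H(c_1)$. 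Resolving (positive case) swaps $o_1$ and $o_2$, so the new columns have $H(c_1') = (o_2', x_1) $ and $H(c_2') = (o_1', x_2)$ — but now since $o_1<x_1$ and $o_1<o_2<x_2$, the markings of $c_1'$ are at heights $o_2,x_1$ with $x_1<o_2$, hmm — I need to recompute: after the swap $c_1$ carries $\bb O$ at height $o_2$ and $\bb X$ at height $x_1$, so its $H$-value is the sorted pair $(x_1,o_2)$; $c_2$ carries $\bb O$ at $o_1$ and $\bb X$ at $x_2$, giving sorted pair $(o_1,x_2)$. The key claim is: $(o_1,x_2)$ is \emph{not} in any interleaving region of $(x_1,o_2)$ (its first coordinate $o_1$ is below $x_1$ and its second coordinate $x_2$ is above $o_2$, i.e.\ it ``contains'' the other interval rather than interleaving with it), so this pair of columns is no longer interleaving. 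The negative case is symmetric, swapping $\bb X$-markings instead, and one checks it likewise turns an interleaving pair into a non-interleaving one.

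**From local to global.** The remaining work is to show the \emph{other} contributions to $scr$ do not increase. For a third column $c$ of $\wt D$, whether $c$ is interleaving with $c_1$ or with $c_2$ depends only on the set of heights $\{o_i,x_i\}$ — and after a resolution this unordered set of four heights is \emph{unchanged} (we only permuted which marking sits at which height within $\{c_1,c_2\}$). More precisely, $H(c)$ lies in an interleaving region of $H(c_1)$-or-$H(c_2)$ exactly when $H(c)$ separates the pair $\{o_1,o_2\}$-or-$\{x_1,x_2\}$ appropriately, and I should check, region by region in Figure~\ref{fig:intImH}, that the number of indices $k\in\{1,2\}$ for which $c$ is interleaving with $c_k$ is the same before and after. (Intuitively: $c$ is interleaving with the column at $(o_i,x_i)$ iff one of $c$'s two heights lies strictly between $o_i$ and $x_i$; summing over $i=1,2$, and using that $\{o_1,x_1,o_2,x_2\}$ as a set is preserved, the count is unchanged. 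The only pair whose interleaving-status genuinely changes is $\{c_1,c_2\}$ itself.) Pairs not involving $c_1,c_2$ are untouched. Doing this in each of the $p$ copies gives $scr(\wt{D_0}) = scr(\wt D) - p < scr(\wt D)$, i.e.\ $scr(D_0) = scr(D) - p$.

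**Main obstacle.** I expect the genuine care to be in the bookkeeping of the second paragraph — pinning down the four heights' cyclic order forced by ``positive/negative skein crossing'' (Figure~\ref{fig:signSkCross}), and handling the cyclic-permutation ambiguity in the $\theta_2$-coordinate so that the ``interleaving region'' picture of Figure~\ref{fig:intImH} applies cleanly (this is where Remark~\ref{rem:IndepAlpha_0}, letting me choose $\alpha_0$ conveniently, does the work). The third-column invariance is conceptually easy but needs the region-by-region check to be stated, not hand-waved. Once those are in place the factor-of-$p$ bookkeeping via the lift is routine.
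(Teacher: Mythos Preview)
Your overall strategy matches the paper's: lift to $\wt D$, use the map $H$ and the interleaving-region picture, and compare counts. The local computation (that the adjacent pair $c_1,c_2$ becomes non-interleaving after the swap) is correct in spirit, though your stated height order $o_1<x_1<o_2<x_2$ is not what Figure~\ref{fig:signSkCross} gives---with that order $c_1,c_2$ would already be non-interleaving. The correct order for a positive skein crossing is $o_1<o_2<x_1<x_2$; then $(o_1,x_1)$ and $(o_2,x_2)$ interleave, while after the $\bb O$-swap $(o_2,x_1)$ and $(o_1,x_2)$ are nested.

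There are two genuine gaps in your ``local to global'' step.

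\textbf{(1) The third-column count is not invariant.} Your heuristic ``the set $\{o_1,x_1,o_2,x_2\}$ is preserved, so the count is unchanged'' is false: what matters is the \emph{partition} of that set into two pairs, which changes. Concretely, with $o_1<o_2<x_1<x_2$, take an unaltered column $c$ with heights $u\in(o_1,o_2)$ and $v\in(x_1,x_2)$. Before the swap $c$ interleaves both $c_1$ and $c_2$ (count $=2$); after the swap $c$ interleaves neither $(o_2,x_1)$ nor $(o_1,x_2)$ (count $=0$). So the correct statement is the inequality $p_1(c)+p_2(c)\ge p_1'(c')+p_2'(c')$, not equality, and your conclusion should be $scr(D_0)\le scr(D)-p$ rather than $scr(D_0)=scr(D)-p$. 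The region-by-region check you propose would reveal this; the paper carries it out and states it as an inequality.

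\textbf{(2) Altered--altered interactions are not handled.} You write ``Pairs not involving $c_1,c_2$ are untouched'', but this fails for the other $p-1$ lifted copies of the skein crossing: if $d_1,d_2$ is the adjacent altered pair in another copy, then when you analyze $\{c_1,c_2\}$ and treat $d_i$ as a ``third column'', $d_i$ is \emph{not} fixed---it also changes under the resolution. So you must separately bound the change in interleaving between $\{c_1,c_2\}$ and $\{d_1,d_2\}$ when \emph{all four} columns move simultaneously. The paper does this as a second inequality (its display (4.4)), and the argument requires an additional case split according to whether there is a choice of $\alpha_0$ that is simultaneously ``good'' for both pairs; when there is not, one lands in a configuration that needs its own check (the paper's Figure~\ref{fig4:intImH}). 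Your sketch does not account for this case at all.

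Both gaps are repairable along the lines you indicate, but the second one needs real work beyond what you have written; it is not subsumed by the third-column argument.
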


Provided Lemma \ref{lem:scrDrop} is true, $D_-$ and $D_0$ have strictly lower complexity than $D$, so by induction we may assume that $J_{p,q}(K_-)$ and $J_{p,q}(K_0)$ are Laurent polynomials. The skein relation (\ref{eqn:skRel1}) then implies $J_{p,q}(K)\in\ints[a^{\pm1},z^{\pm1}]$ as well. The argument is similar if the first skein crossing change in the sequence is negative, only we use
	\[a^{-p}J_{p,q}(K_+)-a^pJ_{p,q}(K)=zJ_{p,q}(K_0).\]

\end{proof}

\begin{proof}[Proof of Lemma \ref{lem:scrDrop}]
The proof will proceed as follows. Recall our definition of a map $H$ from the columns of a grid diagram in $S^3$ to a subset of the coordinate plane (see the end of Section \ref{sec:hty}). Also recall that $scr(D)$ is defined to be the number of pairs of interleaving columns of $\wt D$, the lift of $D$ to $S^3$, and these pairs need not be adjacent. 

Given a skein crossing of $D$ which is resolved to give $D_0$, the lifts $\wt D$ and $\wt{D_0}$ differ in exactly $2p$ columns, each of these being a lift of a column in the given skein crossing of $D$. Both $\wt D$ and $\wt{D_0}$ have grid number $p\cdot GN(D)$. Let $c$ be the column of $\wt D$ between the $\beta$ curves $\beta_i, \beta_{i+1}$. Then define $c'$ to be the column in $\wt{D_0}$ between $\beta_i$ and $\beta_{i+1}$. If $c$ is not one of the $2p$ columns covering the skein crossing, we call $c$ \emph{unaltered} and otherwise $c$ is called \emph{altered}. Note that a column $c$ in $\wt D$ is unaltered exactly when $H(c)=H(c')$. So a pair of columns $\set{c,d}$ that are both unaltered in $\wt D$ is interleaving if and only if $\set{c',d'}$ is interleaving in $\wt{D_0}$.

To prove the lemma then, we show that the number of interleaving pairs of columns $\set{c, d}$ in $\wt D$ such that one of $c$ or $d$ is altered, is larger than the number of interleaving pairs $\set{c', d'}$ in $\wt{D_0}$. This is shown by proving inequalities (\ref{eqn:unaltered}) and (\ref{eqn:altered}) below. Now to the proof.

For one of the two columns in the given skein crossing of $D$, choose $c_1$ as one of the $p$ lifts of that column in $\wt D$. Choose $c_2$ to be the lift of the other column in the skein crossing so that $c_2$ is adjacent to $c_1$. As in the outline of the proof, let $c_1'$ and $c_2'$ be the corresponding columns in $\wt{D_0}$. 

Since $\wt D$ and $\wt{D_0}$ differ only in a specified way, we can refer to the same curve $\alpha_0$ in both diagrams. For $H=H_{\alpha_0}:\scr C(\wt D)\to\rls^2$, let $p_i=H(c_i)$ and for $H=H_{\alpha_0}:\scr C(\wt{D_0})\to\rls^2$, let $p_i'=H(c_i')$. If $q\in\im H$ and $c$ is some column of $\wt D$ (or $\wt{D_0}$) then define $q(c)=1$ if $H(c)$ is in an interleaving region of $q$ (recall, the interleaving regions of a point in the image of $H$ are as shown in Figure \ref{fig:intImH}). Define $q(c)=0$ otherwise. We will also abuse notation and sometimes write $q(H(c))$ to mean $q(c)$.

To prove Lemma \ref{lem:scrDrop} we first show that for any unaltered column $c$, 
	\begin{equation}
	p_1(c)+p_2(c)\ge p_1'(c')+p_2'(c').
	\label{eqn:unaltered}
	\end{equation}
Second we show that if $d_1,d_2$ is a pair of adjacent altered columns in $\wt D$, then 
	\begin{equation}
	\sum_{i=1}^2(p_1(d_i)+p_2(d_i))\ge\sum_{i=1}^2(p_1'(d_i')+p_2'(d_i')).
	\label{eqn:altered}
	\end{equation}
Since $c_1$ and $c_2$ interleave each other in $\wt D$ but $c_1'$ and $c_2'$ are non-interleaving in $\wt{D_0}$ by design, this would show that $scr(D_0)\le scr(D)-p$.

To show that (\ref{eqn:unaltered}) holds, let $c$ be an unaltered column of $\wt D$ and let $q=H(c)=H(c')$. Also let $Z_1, Z_2$ be the markings in $c_1, c_2$ that will swap columns to give $c_1',c_2'$. We can choose $\alpha_0$ so that $\theta_2(Z_1)$ and $\theta_2(Z_2)$ are the $x$-coordinates of $H(c_1)$ and $H(c_2)$ respectively. Without loss of generality, we may also assume that $\theta_2(Z_1)<\theta_2(Z_2)$ and that $\alpha_0$ is on the boundary of the row containing $Z_1$. Hence, no point in $\im H$ has a smaller $x$-coordinate than $p_1=H(c_1)$. In fact, with such a choice of $\alpha_0$ the points $p_1, p_2$ appear as in Figure \ref{fig2:intImH}, and we get $p_1', p_2'$ by interchanging the $x$-coordinates of $p_1,p_2$.

\begin{figure}[ht]
\begin{tikzpicture}[>=stealth,scale=0.8]
		\draw[<->,thick,cm={1,0,0,1,(-2,0)}]
			(0,-3.5)--(0,3.5);
		\draw[<->,thick,cm={1,0,0,1,(0,-2)}]
			(-3.5,0)-- (3.5,0);
		\draw[->,thick]
			(-2,-2) -- (3.5,3.5);
		\fill[gray!60!white]
			(-2,-1.9) -- (-2,1) -- (-1.9,1) -- (-1.9,-1.9);
		\fill[gray!60!white]
			(-1.9,1) -- (1,1) -- (1,3.5) -- (-1.9,3.5);
		\fill[pattern = north east lines, pattern color = black!80!white]
			(-2,2.5) -- (-0.5,2.5) -- (-0.5,-0.5) -- (-2,-0.5);
		\fill[pattern = north east lines, pattern color = black!80!white]
			(-0.5,3.5) -- (-0.5,2.5) -- (2.5,2.5) -- (2.5,3.5);
		\draw[thick,dashed]
			(-2, 1) -- (1,1)
			(-1.9,3.5) -- (-1.9,-1.9)
			(1,1) -- (1,3.5)
			(-2,-1.9) -- (-1.9,-1.9);
		\filldraw
			(-1.9,1) circle (2pt)
			(-0.5,2.5) circle (2pt);
		\draw
			(-1.9,1) node [below left] {$p_1$}
			(-0.5,2.5) node [above left] {$p_2$};
		\filldraw
			(-1,2) circle (2pt);
		\draw[->,thick]
			(-2.2,2) -- node [at start, left] {$q$} (-1.1,2);
		\draw
			(1,-1) node {$H(\scr C(\wt D))$};

		\draw[<->,thick,cm={1,0,0,1,(-2,0)}]
			(8,-3.5)--(8,3.5);
		\draw[<->,thick,cm={1,0,0,1,(0,-2)}]
			(4.5,0)-- (11.5,0);
		\draw[->,thick]
			(6,-2) -- (11.5,3.5);
		\fill[gray!60!white]
			(6,1) -- (6,-0.5) -- (7.5,-0.5) -- (7.5,1);
		\fill[gray!60!white]
			(9,3.5) -- (9,1) -- (7.5,1) -- (7.5,3.5);
		\fill[pattern = north east lines, pattern color = black!80!white]
			(6,2.5) -- (6.1,2.5) -- (6.1,-1.9) -- (6,-1.9);
		\fill[pattern = north east lines, pattern color = black!80!white]
			(6.1,3.5) -- (6.1,2.5) -- (10.5,2.5) -- (10.5,3.5);

		\draw[thick,dashed]
			(6, 1) -- (9,1)
			(9,3.5) -- (9,1)
			(7.5,-0.5) -- (7.5,3.5)
			(6,-0.5) -- (7.5,-0.5);
		\filldraw
			(7.5,1) circle (2pt)
			(6.1,2.5) circle (2pt);
		\draw
			(7.5,1) node [below left] {$p_1'$}
			(6.1,2.5) node [above left] {$p_2'$};
		\filldraw
			(7,2) circle (2pt);
		\draw
			(7,2) node [below left] {$q$};
		\draw
			(9,-1) node {$H(\scr C(\wt{D_0}))$};

\end{tikzpicture}
\caption{The interleaving of unaltered columns in $\wt D$}
\label{fig2:intImH}
\end{figure}
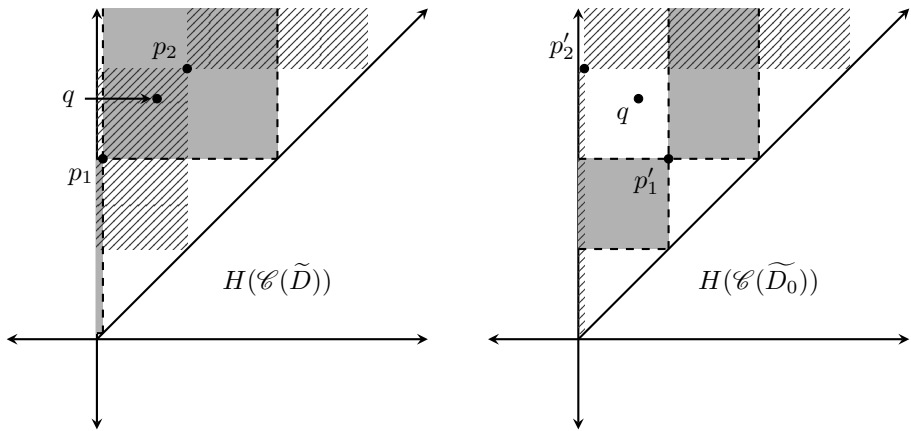

If $q$ is in any region of the plane other than where it appears in Figure \ref{fig2:intImH} then it is evident from the figure that $p_1(c)+p_2(c) = p_1'(c')+p_2'(c')$. On the other hand, if $q$ is as in Figure \ref{fig2:intImH} then $p_1(c)+p_2(c) = 2$ and $p_1'(c')+p_2'(c') = 0$. This shows that (\ref{eqn:unaltered}) holds.

We now show that (\ref{eqn:altered}) holds. As observed in Remark \ref{rem:IndepAlpha_0} our choice of $\alpha_0$ does not change the count in (\ref{eqn:altered}). Given a pair $d_1,d_2$ of interleaving columns in $\wt D$, we say that $\alpha_0$ is \emph{good} for the pair if the $x$-coordinates of $H_{\alpha_0}(d_1)$ and $H_{\alpha_0}(d_2)$ either both come from heights of $\bb O$ markings or both come from heights of $\bb X$ markings. Every pair of interleaving columns has at least 2 good $\alpha$-curves. 

Suppose $d_1$ and $d_2$ are a pair of ajacent, altered columns, $d_1'$ and $d_2'$ are the corresponding columns in $\wt{D_0}$, $q_i=H(d_i)$ and $q_i'=H(d_i')$. If $\alpha_0$ is good for $d_1, d_2$ then the points $q_1, q_1', q_2, q_2'$ are the corners of a rectangle in $\rls^2$, with sides parallel to the axes and with $q_1$ (resp.\ $q_2$) being the lower left (resp.\ upper right) corner. In our proof of (\ref{eqn:unaltered}), the choice of $\alpha_0$ was good for $c_1,c_2$.

\begin{figure}[ht]
\begin{tikzpicture}[>=stealth,scale=0.8]
		\draw[<->,thick,cm={1,0,0,1,(-2,0)}]
			(0,-3.5)--(0,3.5);
		\draw[<->,thick,cm={1,0,0,1,(0,-2)}]
			(-3.5,0)-- (3.5,0);
		\draw[->,thick]
			(-2,-2) -- (3.5,3.5);
		\fill[gray!60!white]
			(-2,0) -- (-2,1.5) -- (0,1.5) -- (0,0);
		\fill[gray!60!white]
			(0,1.5) -- (1.5,1.5) -- (1.5,3.5) -- (0,3.5);
		\fill[pattern = north east lines, pattern color = black!80!white]
			(1,3.5) -- (1,3) -- (3,3) -- (3,3.5)
			(-2,3) -- (1,3) -- (1,1) -- (-2,1);
		\draw[thick,dashed]
			(-2, 1.5) -- (1.5,1.5)
			(0,3.5) -- (0,0)
			(1.5,1.5) -- (1.5,3.5)
			(-2,0) -- (0,0);
		\filldraw
			(0,1.5) circle (2pt)
			(1,3) circle (2pt);
		\filldraw[thick]
			(1.25,2.5) circle (2pt)
			(2,2.5) circle (2pt)
			(2,3.25) circle (2pt)
			(1.25,3.25) circle (2pt);
		\draw[thick]
			(1.25,2.5) -- (2,2.5) -- (2,3.25) -- (1.25,3.25) -- cycle;
		\draw
			(0,1.5) node [above left] {$p_1$}
			(1,3) node [above left] {$p_2$};
		\draw
			(1,-1) node {$H(\scr C(\wt D))$};

		\draw[<->,thick,cm={1,0,0,1,(6,0)}]
			(0,-3.5)--(0,3.5);
		\draw[<->,thick,cm={1,0,0,1,(8,-2)}]
			(-3.5,0)-- (3.5,0);
		\draw[->,thick]
			(6,-2) -- (11.5,3.5);
		\fill[gray!60!white]
			(6,1) -- (6,1.5) -- (9,1.5) -- (9,1);
		\fill[gray!60!white]
			(9,1.5) -- (9.5,1.5) -- (9.5,3.5) -- (9,3.5);
		\fill[pattern = north east lines, pattern color = black!80!white]
			(8,3.5) -- (8,3) -- (11,3) -- (11,3.5)
			(6,3) -- (8,3) -- (8,0) -- (6,0);
		\draw[thick,dashed]
			(6, 1.5) -- (9.5,1.5)
			(9,3.5) -- (9,1)
			(9.5,1.5) -- (9.5,3.5)
			(6,1) -- (9,1);
		\filldraw
			(9,1.5) circle (2pt)
			(8,3) circle (2pt);
		\draw
			(9,1.5) node [above left] {$p_1'$}
			(8,3) node [above left] {$p_2'$};
		\draw
			(9,-1) node {$H(\scr C(\wt{D_0}))$};

\end{tikzpicture}
\caption{The interleaving of altered columns in $\wt D$ with a common good choice of $\alpha_0$}
\label{fig3:intImH}
\end{figure}

Now suppose there is a choice of $\alpha_0$ that is good for both pairs $c_1, c_2$ and $d_1, d_2$ simultaneously. With this choice of $\alpha_0$ the points $p_i, p_i'$ for $i=1,2$ appear as in Figure \ref{fig3:intImH} with corresponding interleaving regions. Note that for an arbitrary point $a$ in the second octant, we have that $p_1(a)+p_2(a)\ge p_1'(a)+p_2'(a)$. Let $R$ be a rectangle in this octant (the sides of $R$ parallel to the axes) with corner points in the image of $H$, labeled counter-clockwise by $r_1,s_1,r_2,s_2$ with $r_1$ in the lower left corner. If 
 	\begin{equation}
	 \sum_{i=1}^2p_1(r_i)+p_2(r_i)\ge \sum_{i=1}^2p_1(s_i)+p_2(s_i)
	 \label{eqn:rectSum}
	 \end{equation}
then since $p_1(s_j)+p_2(s_j)\ge p_1'(s_j)+p_2'(s_j)$ we see that (\ref{eqn:altered}) is satisfied. One easily checks that (\ref{eqn:rectSum}) is satisfied for any rectangle in the second octant. For the example rectangle shown on the left of Figure \ref{fig3:intImH} we have $\sum p_1(r_i)+p_2(r_i) = 2$ and $\sum p_1(s_i)+p_2(s_i) = 2$.

We are left only to check the case when there is no choice of $\alpha_0$ that is good for both $c_1,c_2$ and $d_1,d_2$. This can only occur when the set of good $\alpha$-curves for $d_1,d_2$ is contained in the set of $\alpha$-curves that are not good for $c_1,c_2$. In such a situation, we have two cases. Either $d_i$ is interleaving with both $c_1$ and $c_2$ for $i=1$ and $i=2$, or $d_i$ is non-interleaving with both $c_1$ and $c_2$, for $i=1$ and $i=2$. In the first case, since the right hand side of (\ref{eqn:altered}) can be at most 4, and the left hand side equals 4, (\ref{eqn:altered}) is satisfied.

\begin{figure}[ht]
\begin{tikzpicture}[>=stealth,scale=0.8]
		\draw[<->,thick,cm={1,0,0,1,(-2,0)}]
			(0,-3.5)--(0,3.5);
		\draw[<->,thick,cm={1,0,0,1,(0,-2)}]
			(-3.5,0)-- (3.5,0);
		\draw[->,thick]
			(-2,-2) -- (3.5,3.5);
		\fill[gray!60!white]
			(-2,-1.9) -- (-2,1) -- (-1.9,1) -- (-1.9,-1.9);
		\fill[gray!60!white]
			(-1.9,1) -- (1,1) -- (1,3.5) -- (-1.9,3.5);
		\fill[pattern = north east lines, pattern color = black!80!white]
			(-2,2.5) -- (-0.5,2.5) -- (-0.5,-0.5) -- (-2,-0.5);
		\fill[pattern = north east lines, pattern color = black!80!white]
			(-0.5,3.5) -- (-0.5,2.5) -- (2.5,2.5) -- (2.5,3.5);
		\draw[thick,dashed]
			(-2, 1) -- (1,1)
			(-1.9,3.5) -- (-1.9,-1.9)
			(1,1) -- (1,3.5)
			(-2,-1.9) -- (-1.9,-1.9);
		\filldraw
			(-1.9,1) circle (2pt)
			(-0.5,2.5) circle (2pt);
		\draw
			(-1.9,1) node [below left] {$p_1$}
			(-0.5,2.5) node [above left] {$p_2$};
		\filldraw
			(1.2,2) circle (2pt)
			(1.7,2.4) circle (2pt)
			(1.2,1.7) circle (2pt)
			(2, 2.4) circle (2pt);
		\draw[->]
			(2,0.9) -- node[at start,right] {$q_{1+i}'$} (1.3,1.6);
		\draw[->]
			(2.8,1.6) -- node[at start,right] {$q_{2+i}'$} (2.1,2.3); 
		\draw[->]
			(0.5,2) -- node [at start,left] {$q_1$} (1.1,2);
		\draw[->]
			(1.7,3.6) -- node [at start, above] {$q_2$} (1.7,2.5);

		\draw[<->,thick,cm={1,0,0,1,(6,0)}]
			(0,-3.5)--(0,3.5);
		\draw[<->,thick,cm={1,0,0,1,(8,-2)}]
			(-3.5,0)-- (3.5,0);
		\draw[->,thick]
			(6,-2) -- (11.5,3.5);
		\fill[gray!60!white]
			(6,-1.9) -- (6,1) -- (6.1,1) -- (6.1,-1.9);
		\fill[gray!60!white]
			(6.1,1) -- (9,1) -- (9,3.5) -- (6.1,3.5);
		\fill[pattern = north east lines, pattern color = black!80!white]
			(6,2.5) -- (7.5,2.5) -- (7.5,-0.5) -- (6,-0.5);
		\fill[pattern = north east lines, pattern color = black!80!white]
			(7.5,3.5) -- (7.5,2.5) -- (10.5,2.5) -- (10.5,3.5);
		\draw[thick,dashed]
			(6, 1) -- (9,1)
			(6.1,3.5) -- (6.1,-1.9)
			(9,1) -- (9,3.5)
			(6,-1.9) -- (6.1,-1.9);
		\filldraw
			(6.1,1) circle (2pt)
			(7.5,2.5) circle (2pt);
		\draw
			(6.1,1) node [below left] {$p_1$}
			(7.5,2.5) node [above left] {$p_2$};
		\filldraw
			(6.3,-1) circle (2pt)
			(6.6,-0.7) circle (2pt)
			(6.3,-1.4) circle (2pt)
			(7, -0.7) circle (2pt);
		\draw[->]
			(7.1,-1.4) -- node[at start,right] {$q_{1+i}'$} (6.4,-1.4);
		\draw[->]
			(7.8,-0.7) -- node[at start,right] {$q_{2+i}'$} (7.1,-0.7); 
		\draw[->]
			(5.5,-1.5) -- node [at start,left] {$q_1$} (6.25,-1.1);
		\draw[->]
			(5.5,-0.7) -- node [at start, left] {$q_2$} (6.4,-0.7);

\end{tikzpicture}
\caption{The interleaving of altered columns in $\wt D$ without a common good choice of $\alpha_0$}
\label{fig4:intImH}
\end{figure}

Consider the case when $d_i$ is non-interleaving with both $c_1$ and $c_2$, for $i=1,2$. Then, since $d_1$ is interleaving with $d_2$, the heights of all the markings in $d_1,d_2$ are either between the $\bb O$ markings of $c_1,c_2$ or between the $\bb X$ markings of $c_1,c_2$. Choose $\alpha_0$ so that the $x$-coordinate of $p_1$ is minimal (as we did in Figure \ref{fig2:intImH}). With this choice we have the following: for $i\in\set{1,2}$, both $x$- and $y$-coordinates of $q_i=H(d_i)$ are either numbers between the $y$-coordinates of $p_1$ and $p_2$ or are between the $x$-coordinates of $p_1$ and $p_2$. The first case is depicted on the left of Figure \ref{fig4:intImH} and the second case on the right, and in the figure $i\in\set{1,2}$, and indices are taken mod 2. In both cases, the left-hand and right-hand sides of (\ref{eqn:altered}) are both zero. \end{proof}

\section{Examples and Computations}
\label{sec:comput}

In this section we compute the polynomial $J_{p,q}(L)$ for two examples in $L(5,1)$. We choose our normalization as follows. Recall that grid projections are oriented, with each vertical arc oriented from the $\bb O$ marking to the $\bb X$ marking, and each horizontal arc oriented from the marking in $\bb X$ to that in $\bb O$. Given a trivial link $T$ with index set $\cl I$ and trivial link diagram $D(\cl I)$, we consider the grid projection $P(\cl I)$ associated to $D(\cl I)$ which has the property that for any component $C$ in the projection, $\alpha_0\cdot C\ge0$ and $C\cdot\beta_0\ge0$ with equality when $C$ is nullhomotopic. Since each component of $D(\cl I)$ has only one column and one row, $P(\cl I)$ is determined by $T$. Define $w(P(\cl I))$ to be the writhe of $P(\cl I)$ and define $\mu=\alpha_0\cdot P(\cl I)$ and $\lambda=P(\cl I)\cdot\beta_0$. Finally, let $s(T)=w(P(\cl I))-\frac{\mu\lambda+(\mu-\lambda)}p$, which is well-defined since $T$ determines $P(\cl I)$. Then define $J_{p,q}(T)=a^{ps(T)+1}$. This choice of normalization receives its motivation from Legendrian knot theory. 

The first example we compute is a nullhomotopic knot $B$ which is depicted via a grid diagram at the top of Figure \ref{fig:example1}. The second example $L$, also in $L(5,1)$ and shown in Figure \ref{fig:example2}, is homotopically nontrivial, having $\mu(L)= 2\mod 5$. After computing the two examples we comment how the second example gives rise to a family of examples that are of interest in \cite{C2}.

\begin{exmp} In Figure \ref{fig:example1} we show the skein tree of $B$. Note that the relevant skein crossing in $B$ is positive. The skein crossing change (the left branch in the tree) takes $B$ to the unknot $K_0$. Resolution of the skein crossing (the right branch) gives a link $B_0$.

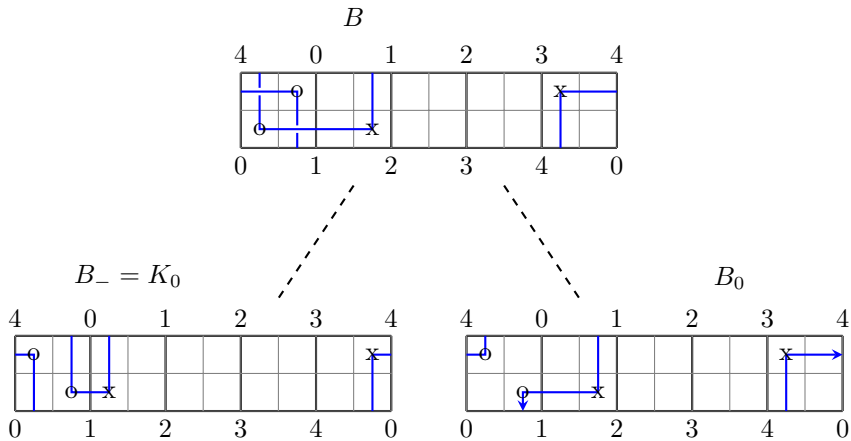
\begin{figure}[ht]
	\[\begin{tikzpicture}[scale=0.1,>=stealth]
	\draw
		(15,15) node [above] {$B$};
	\draw[step=10,thick]
			(0,0) grid (50,10);
	\draw[gray,thin,step=5]
			(0,0) grid (50,10);	
	\foreach \l in {0,1,2,3,4}
	\draw 
		(10*\l,0) node [below] {\l};
	\draw	
		(50,0) node [below] {0};	
	\foreach \l in {0,1,2,3,4}
	\draw
		(10*\l+10,10) node[above] {\l};
	\draw
		(0,10) node[above] {4};
	\draw[blue,thick]
		(42.5,7.5) -- (50,7.5) (0,7.5) -- (7.5,7.5) -- (7.5,3) (7.5,2) -- (7.5,0)
		(17.5,10) -- (17.5,2.5) -- (2.5,2.5) -- (2.5,7) (2.5,8) -- (2.5,10)
		(42.5,0) -- (42.5,7.5);
	\foreach \x in {(42.5,7.5), (17.5,2.5)}
		\draw 	\x node {x};
	\foreach \o in {(7.5,7.5), (2.5,2.5)}
		\draw	\o node {o};
		
	\draw[dashed,thick]
		(15,-5) -- (5,-20)
		(35,-5) -- (45,-20);
	\draw[cm={1,0,0,1,(-30,-35)}]
		(15,15) node [above] {$B_-=K_0$};
	\draw[step=10,thick,cm={1,0,0,1,(-30,-35)}]
			(0,0) grid (50,10);
	\draw[gray,thin,step=5,cm={1,0,0,1,(-30,-35)}]
			(0,0) grid (50,10);	
	\foreach \l in {0,1,2,3,4}
	\draw[cm={1,0,0,1,(-30,-35)}] 
		(10*\l,0) node [below] {\l};
	\draw[cm={1,0,0,1,(-30,-35)}]	
		(50,0) node [below] {0};	
	\foreach \l in {0,1,2,3,4}
	\draw[cm={1,0,0,1,(-30,-35)}]
		(10*\l+10,10) node[above] {\l};
	\draw[cm={1,0,0,1,(-30,-35)}]
		(0,10) node[above] {4};
	\draw[blue,thick,cm={1,0,0,1,(-30,-35)}]
		(47.5,7.5) -- (50,7.5) (0,7.5) -- (2.5,7.5) -- (2.5,0)
		(12.5,10) -- (12.5,2.5) -- (7.5,2.5) -- (7.5,10)
		(47.5,0) -- (47.5,7.5);
	\foreach \x in {(47.5,7.5), (12.5,2.5)}
		\draw[cm={1,0,0,1,(-30,-35)}]
		 	\x node {x};
	\foreach \o in {(2.5,7.5), (7.5,2.5)}
		\draw[cm={1,0,0,1,(-30,-35)}]
			\o node {o};
	\draw[cm={1,0,0,1,(30,-35)}]
		(35,15) node [above] {$B_0$};
	\draw[step=10,thick,cm={1,0,0,1,(30,-35)}]
			(0,0) grid (50,10);
	\draw[gray,thin,step=5,cm={1,0,0,1,(30,-35)}]
			(0,0) grid (50,10);	
	\foreach \l in {0,1,2,3,4}
	\draw[cm={1,0,0,1,(30,-35)}] 
		(10*\l,0) node [below] {\l};
	\draw[cm={1,0,0,1,(30,-35)}]	
		(50,0) node [below] {0};	
	\foreach \l in {0,1,2,3,4}
	\draw[cm={1,0,0,1,(30,-35)}]
		(10*\l+10,10) node[above] {\l};
	\draw[cm={1,0,0,1,(30,-35)}]
		(0,10) node[above] {4};
	\draw[blue,thick,cm={1,0,0,1,(30,-35)},->]
		(17.5,10) -- (17.5,2.5) -- (7.5,2.5) -- (7.5,0);
	\draw[blue,thick,cm={1,0,0,1,(30,-35)},->]	
		(42.5,0) -- (42.5,7.5) -- (50,7.5);
	\draw[blue,thick,cm={1,0,0,1,(30,-35)}]
		(0,7.5) -- (2.5,7.5) -- (2.5,10); 
	\foreach \x in {(42.5,7.5), (17.5,2.5)}
		\draw[cm={1,0,0,1,(30,-35)}]
		 	\x node {x};
	\foreach \o in {(2.5,7.5), (7.5,2.5)}
		\draw[cm={1,0,0,1,(30,-35)}]
			\o node {o};

	\end{tikzpicture}\]
	\caption{Example: A link $B$ in $L(5,1)$ and its skein tree.}
	\label{fig:example1}
\end{figure}

To see that $B_0$ is isotopic to a trivial link, label the $\bb O$ markings $O_1, O_2$ so that $\theta_1(O_1)<\theta_1(O_2)$. The grid projection of $B_0$ in Figure \ref{fig:example1} is such that the vertical arc $v_1$, with endpoint $O_1$, has $\alpha_0\cdot v_1=1>0$. However, the other vertical arc $v_2$, with endpoint $O_2$ is such that $\alpha_0\cdot v_2=-1<0$. So we have $\mu(O_1)=1$ and $\mu(O_2)=4\equiv-1 (\text{ mod }5)$. Since $1\cdot 1 < 4\cdot 1 (\text{ mod }5)$, we see that the grid diagram shown for $B_0$ is the exactly the trivial link diagram $D(0,1,0,0,1)$.

\begin{figure}[ht]
	\[\begin{tikzpicture}[scale=0.1,>=stealth]
	\draw[step=10,thick,cm={1,0,0,1,(30,-35)}]
			(0,0) grid (50,10);
	\draw[gray,thin,step=5,cm={1,0,0,1,(30,-35)}]
			(0,0) grid (50,10);	
	\foreach \l in {0,1,2,3,4}
	\draw[cm={1,0,0,1,(30,-35)}] 
		(10*\l,0) node [below] {\l};
	\draw[cm={1,0,0,1,(30,-35)}]	
		(50,0) node [below] {0};	
	\foreach \l in {0,1,2,3,4}
	\draw[cm={1,0,0,1,(30,-35)}]
		(10*\l+10,10) node[above] {\l};
	\draw[cm={1,0,0,1,(30,-35)}]
		(0,10) node[above] {4};
	\draw[blue,thick,cm={1,0,0,1,(30,-35)}]
		(17.5,0) -- (17.5,2.5);	
	\draw[blue,thick,cm={1,0,0,1,(30,-35)},->]
		(17.5,2.5) -- (22.5,2.5);
	\draw[blue,thick,cm={1,0,0,1,(30,-35)}]
		(17.5,2.5) -- (50,2.5) (0,2.5) -- (7.5,2.5) -- (7.5,10);
	\foreach \x in {47.5, 37.5, 27.5}	
	\draw[blue,thick,cm={1,0,0,1,(30,-35)},->]
		(\x,0) -- (\x,2.5);
	\foreach \x in {37.5, 27.5}
	\draw[blue,thick,cm={1,0,0,1,(30,-35)}]
		(\x,3) -- (\x,10);
	\draw[blue,thick,cm={1,0,0,1,(30,-35)}]
		(47.5,3) -- (47.5,7) (47.5,8) -- (47.5,10);
	\draw[blue,thick,cm={1,0,0,1,(30,-35)}]	
		(42.5,0) -- (42.5,2) (42.5,3) -- (42.5,7.5) -- (50,7.5); 
	\draw[blue,thick,cm={1,0,0,1,(30,-35)},->]
		(0,7.5) -- (2.5,7.5) -- (2.5,10);
	\foreach \x in {(42.5,7.5), (17.5,2.5)}
		\draw[cm={1,0,0,1,(30,-35)}]
		 	\x node {x};
	\foreach \o in {(2.5,7.5), (7.5,2.5)}
		\draw[cm={1,0,0,1,(30,-35)}]
			\o node {o};
	\draw[thick,cm={1,0,0,1,(30,-35)},->]
		(0,0) -- (5,0);
	\draw[thick,cm={1,0,0,1,(30,-35)},->]
		(0,0) -- (0,5);

	\end{tikzpicture}\]
	\caption{The grid projection $P(0,1,0,0,1)$ for the trivial link $B_0$ (shown in blue). For the purposes of calculating $s(B_0)$, recall that $\alpha_0$ is the circle on the torus represented by the bottom (and top) edge, oriented as indicated. The circle $\beta_0$ is the union of the thick vertical black segments in the figure, oriented upward as shown.}
	\label{fig:2ndexample1}
\end{figure}
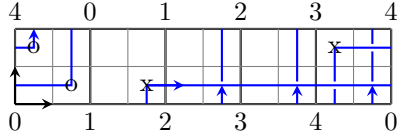
While $B_0$ is a trivial link with index set $(0,1,0,0,1)$, the corresponding grid projection shown in Figure \ref{fig:example1} is not $P(0,1,0,0,1)$. The projection $P=P(0,1,0,0,1)$ is illustrated in Figure \ref{fig:2ndexample1}, where it is clear that $\alpha_0\cdot P=1+4=5$ and $P\cdot\beta_0 = 1+4 = 5$. As the writhe is $w(P)=5$, we see that $s(B_0)=0$, and so $J_{5,1}(B_0)=a^{0+1}=a$. Therefore,
	\al{
	J_{5,1}(B) 	&= a^{10}J_{5,1}(K_0)+a^5zJ_{5,1}(B_0)\\
				&= a^{10}\left(a^{-4}\right)+a^5z\left(a\right)\\
				&= a^6(1+z).
	}
\end{exmp}

\begin{figure}[ht]
	\[\begin{tikzpicture}[scale=0.085]
	\draw
		(17,17) node [above] {$L$};
	\draw[step=12,thick]
			(0,0) grid (60,12);
	\draw[gray,thin,step=4]
			(0,0) grid (60,12);	
	\foreach \l in {0,1,2,3,4}
	\draw 
		(12*\l,0) node [below] {\l};
	\draw	
		(60,0) node [below] {0};	
	\foreach \l in {0,1,2,3,4}
	\draw
		(12*\l+12,12) node[above] {\l};
	\draw
		(0,12) node[above] {4};
	\draw[blue,thick]
		(58,0) -- (58,2) -- (50,2) -- (50,5) (50,7) -- (50,10) -- (6,10) -- (6,12)
		(54,0) -- (54,1) (54,3) -- (54,6) -- (10,6) -- (10,9) (10,11) -- (10,12);
	\foreach \x in {(50,10), (54,6), (58,2)}
		\draw 	\x node {x};
	\foreach \o in {(6,10),(10,6),(50,2)}
		\draw	\o node {o};
		
	\draw[dashed,thick]
		(15,-5) -- (5,-20)
		(40,-5) -- (50,-20);
	\draw[cm={1,0,0,1,(-35,-40)}]
		(17,17) node [above] {$L_+$};
	\draw[step=12,thick,cm={1,0,0,1,(-35,-40)}]
			(0,0) grid (60,12);
	\draw[gray,thin,step=4,cm={1,0,0,1,(-35,-40)}]
			(0,0) grid (60,12);	
	\foreach \l in {0,1,2,3,4}
	\draw[cm={1,0,0,1,(-35,-40)}] 
		(12*\l,0) node [below] {\l};
	\draw[cm={1,0,0,1,(-35,-40)}]	
		(60,0) node [below] {0};	
	\foreach \l in {0,1,2,3,4}
	\draw[cm={1,0,0,1,(-35,-40)}]
		(12*\l+12,12) node[above] {\l};
	\draw[cm={1,0,0,1,(-35,-40)}]
		(0,12) node[above] {4};
	\draw[blue,thick,cm={1,0,0,1,(-35,-40)}]
		(58,0) -- (58,2) -- (54,2) -- (54,10) -- (2,10) -- (2,12)
		(50,0) -- (50,6) -- (10,6) -- (10,9) (10,11) -- (10,12);
	\foreach \x in {(54,10), (50,6), (58,2)}
		\draw[cm={1,0,0,1,(-35,-40)}]
		 	\x node {x};
	\foreach \o in {(2,10),(10,6),(54,2)}
		\draw[cm={1,0,0,1,(-35,-40)}]
			\o node {o};
	\draw[cm={1,0,0,1,(35,-40)}]
		(37,17) node [above] {$L_0$};
	\draw[step=12,thick,cm={1,0,0,1,(35,-40)}]
			(0,0) grid (60,12);
	\draw[gray,thin,step=4,cm={1,0,0,1,(35,-40)}]
			(0,0) grid (60,12);	
	\foreach \l in {0,1,2,3,4}
	\draw[cm={1,0,0,1,(35,-40)}]
		(12*\l,0) node [below] {\l};
	\draw[cm={1,0,0,1,(35,-40)}]
		(60,0) node [below] {0};	
	\foreach \l in {0,1,2,3,4}
	\draw[cm={1,0,0,1,(35,-40)}]
		(12*\l+12,12) node[above] {\l};
	\draw[cm={1,0,0,1,(35,-40)}]
		(0,12) node[above] {4};
	\draw[blue,thick,cm={1,0,0,1,(35,-40)}]
		(58,0) -- (58,2) -- (54,2) -- (54,6) -- (10,6) -- (10,9) (10,11) -- (10,12)
		(50,0) -- (50,5) (50,7) -- (50,10) -- (2,10) -- (2,12);
	\foreach \x in {(50,10), (54,6), (58,2)}
		\draw[cm={1,0,0,1,(35,-40)}]
		 	\x node {x};
	\foreach \o in {(2,10),(10,6),(54,2)}
		\draw [cm={1,0,0,1,(35,-40)}]
			\o node {o};

	\end{tikzpicture}\]
	\caption{Example: A link $L$ in $L(5,1)$ and its skein tree.}
	\label{fig:example2}
\end{figure}
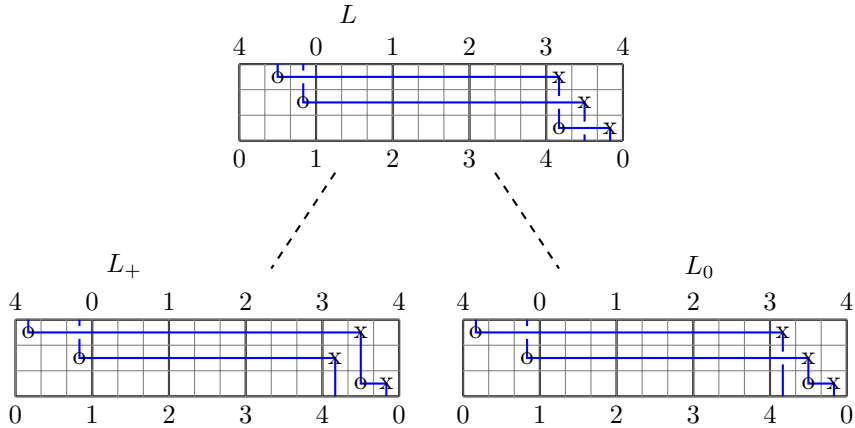

\begin{exmp} In the second example we have a negative skein crossing associated to the first and second columns of the grid diagram for $L$. Via grid moves that correspond to isotopy (a non-interleaving row commutation, destabilization, a non-interleaving column commutation, then another destabilization) the diagram shown for $L_+$ in Figure \ref{fig:example2} is taken to a grid number one diagram of the trivial knot $K_2$. So $L_+$ is isotopic to the trivial knot $K_2$.

The diagram shown in the figure of $L_0$ (after a destabilization) has two grid number one components: both of type $D_{K_1}$. The $\bb O$ markings are adjacent and on a negative slope, so we have the diagram $D(0,2,0,0,0)$ and $L_0$ is trivial.

Since $s(K_2)=\frac15$ and $s(L_0)=-\frac45$, we compute
	\al{
	J_{5,1}(L) &= a^{-10}J_{5,1}(K_2)-a^{-5}zJ_{5,1}(L_0)\\
			&= a^{-10}(a^2) - a^{-5}z(a^{-3})\\
			&= a^{-8}(1-z).
	}
\label{exmp2}
\end{exmp}
\vspace*{-12pt}

Consider the family of links $\set{L_n}_{n\ge0}$ where $L_n$ is the link associated to the grid diagram in Figure \ref{fig:L_n} with grid number $n+2$. Then $L_1$ is the knot $L$ in Example \ref{exmp2} above and $L_0$ is isotopic to the trivial 2-component link in the skein tree of $L$. The link $L_n$ is a knot if $n$ is odd and a 2-component link if $n$ is even. 

\begin{figure}[ht]
	\[\begin{tikzpicture}[scale=0.09]
	\draw[step=28,thick]
			(0,0) grid (140,12.5);
	\draw[step=28,thick,cm={1,0,0,1,(0,4)}]
			(0,11.5) grid (140,28);
	\draw[gray,thin,step=4]
			(0,0) grid (140,12.5)
			(0,15.5) grid (140,32);	
	\foreach \l in {0,1,2,3,4}
	\draw 
		(28*\l,0) node [below] {\l};
	\draw	
		(140,0) node [below] {0};	
	\foreach \l in {0,1,2,3,4}
	\draw
		(28*\l+28,32) node[above] {\l};
	\draw
		(0,32) node[above] {4};
	\foreach \x in {(114,30),(118,26),(130,10), (134,6), (138,2)}
		\draw 	\x node {x};
	\foreach \o in {(22,30),(26,26),(114,22),(118,18),(130,2)}
		\draw	\o node {o};
	
	\draw
		(122.5,32) node[above] {$\overbrace{\qquad\qquad}$}
		(122.5,35) node[above] {\small $n$ columns};
	\fill[white]
		(121,-2) -- (121,33) -- (127,33) -- (127,-2) -- cycle;
	\draw
		(124,14) node {$\ddots$};
					
	\end{tikzpicture}\]
	\caption{Grid diagram associated to the link $L_n$.}
	\label{fig:L_n}
\end{figure}
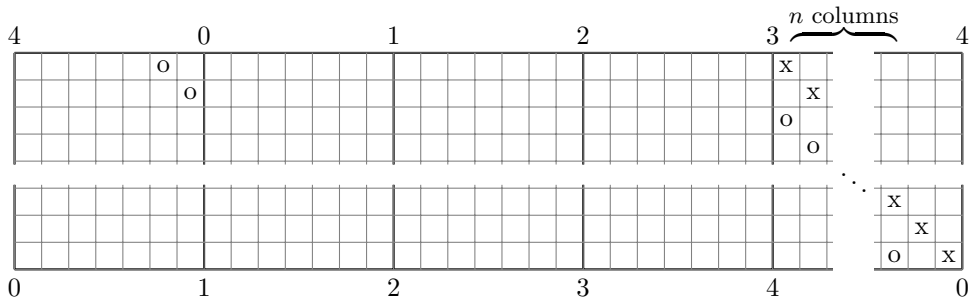

The first two columns of this grid diagram make a negative skein crossing. It is not difficult to see that for $n\ge2$, commutation of these columns of $L_n$ gives a link isotopic to $L_{n-2}$ and the resolution of the same columns gives a link isotopic to $L_{n-1}$. Therefore
	\[J_{5,1}(L_n) = a^{-10}J_{5,1}(L_{n-2})-a^{-5}zJ_{5,1}(L_{n-1}).\]
We know that $J_{5,1}(L_0)=a^{-3}$ and $J_{5,1}(L_1)=a^{-8}(1-z)$ by our computations above. Define $f_n$ recursively: let
	\al{
		f_0= 1,\qquad	&\qquad f_1=1-z,
	}
	and define $f_n=f_{n-2}-zf_{n-1}$ for $n\ge2$. Then the skein relation implies that $J_{5,1}(L_n)=a^{-5n-3}f_n$. Note that the recursive definition of $f_n$ implies that it is not zero for any $n$. This gives a family of links in $L(5,1)$ with the lower degree in $a$ of the HOMFLY polynomial being arbitrarily negative. We will see in \cite{C2} that the corresponding Legendrian links to these diagrams maximize the classical contact invariant, $\tb_\rat+\abs{\rot_\rat}$, in their link type.

\bibliography{HOMpolyrefs}
\bibliographystyle{amsplain}
	
\end{document}